\documentclass[11pt,a4paper]{article}

\usepackage[utf8]{inputenc}
\usepackage[T1]{fontenc}
\usepackage{amsmath}
\usepackage{amsfonts}
\usepackage{amssymb}
\usepackage{amsthm}
\usepackage{thmtools}
\usepackage{enumerate}
\usepackage{mathtools} 
\usepackage{xfrac}
\usepackage{lmodern}
\usepackage[nottoc]{tocbibind} 
\usepackage{textgreek}
\usepackage{graphicx}
\usepackage[dvipsnames]{xcolor} 
\usepackage[english]{babel}
\usepackage{slashed}
\usepackage{microtype}
\usepackage[autostyle=true]{csquotes}
\usepackage[all,cmtip]{xy}
\usepackage{tikz-cd}
\usepackage{rotating}

\definecolor{Heather}{RGB}{164, 132, 172}
\usepackage[pdfusetitle,bookmarks,pdfpagelabels]{hyperref} 
\hypersetup{
  colorlinks,
  urlcolor=Periwinkle,
  citecolor=Heather,
  linkcolor=teal,
  breaklinks=true,
  final,
}

\usepackage[a4paper,
            top=2.5cm,
            bottom=2.5cm,
            inner=3.5cm,
            outer=3.5cm]{geometry}

\usepackage[style=alphabetic,
            backend=biber,
            url=true,
            doi=true,
            isbn=false,
            giveninits=true]{biblatex}

\DeclareDelimFormat[textcite]{multinamedelim}{\textendash}
\DeclareDelimAlias[textcite]{finalnamedelim}{multinamedelim}

\DeclareSourcemap{
  \maps{
    \map{ %
        \step[fieldset = month, null]
        \step[fieldset = day, null]
        \step[fieldset = language, null]
    }
    \map{ %
        \step[fieldsource = doi, final]
        \step[fieldset = url, null]
    }
    \map{ %
        \step[fieldsource = eprint, final]
        \step[fieldset = url, null]
    }
  }
}

\addbibresource{Bibliography_AnaStructInj.bib}

\usepackage[capitalise]{cleveref} 

\usepackage{xparse} 
\usepackage{enumitem}
\setlist[enumerate]{label={\upshape(\roman*)}}
\newlist{myenumi}{enumerate}{1}
\setlist[myenumi,1]{label=\upshape(\roman*)}
\newlist{myenuma}{enumerate}{1}
\setlist[myenuma,1]{label=\upshape(\alph*)}

\declaretheorem[name=Theorem,numberwithin=section]{thm}
\declaretheorem[name=Lemma,numberlike=thm]{lem}
\declaretheorem[name=Corollary,numberlike=thm]{cor}
\declaretheorem[name=Proposition,numberlike=thm]{prop}

\declaretheorem[name=Definition,numberlike=thm,style=definition]{defn}

\declaretheorem[name=Remark,numberlike=thm,style=definition]{rem}

\crefdefaultlabelformat{#2\textup{#1}#3}
\crefname{thm}{Theorem}{Theorems}
\crefname{lem}{Lemma}{Lemmas}
\crefname{defn}{Definition}{Definitions}
\crefname{prop}{Proposition}{Propositions}
\crefname{cor}{Corollary}{Corollaries}
\crefname{equation}{}{}

\newcommand{\N}{\mathbb{N}}

\newcommand{\Z}{\mathbb{Z}}

\newcommand{\R}{\mathbb{R}}

\newcommand{\C}{\mathbb{C}}

\newcommand{\vol}{\mathrm{vol}}
\newcommand{\Ric}{\mathrm{Ric}}

\newcommand{\EM}{\mathrm{EM}}
\newcommand{\BC}{\mathrm{BC}}

\newcommand{\cX}{\mathcal{X}}
\newcommand{\cY}{\mathcal{Y}}
\newcommand{\cA}{\mathcal{A}}
\newcommand{\cU}{\mathcal{U}}

\newcommand{\cK}{\mathcal{K}}
\newcommand{\cB}{\mathcal{B}}

\newcommand{\Mult}{\mathcal{M}}

\newcommand{\KK}{\mathrm{K\!\, K}}

\newcommand{\Bfree}{\mathrm{B}}
\newcommand{\Efree}{\mathrm{E}}
\newcommand{\Eub}{\underline{\mathrm{E}}}
\newcommand{\Ct}{\mathrm{C}}
\newcommand*{\Cz}{\Ct_0}

\newcommand*{\Cb}{\Ct_{\mathrm{b}}}

\newcommand*{\Lin}{\cB}
\newcommand*{\Kom}{\cK}
\newcommand*{\U}{\cU}
\newcommand*{\elltwo}{\ell^2}

\DeclareMathOperator{\Hom}{Hom}

\newcommand{\K}{\mathrm{K}}
\newcommand{\KO}{\mathrm{KO}}
\newcommand{\Ktop}{\mathrm{K}^\mathrm{top}}
\newcommand{\Strg}{\mathrm{S}}
\newcommand{\StrgReal}{\mathrm{S}\mathbf{R}}

\newcommand*{\StolzPos}{\mathrm{Pos}^{\mathrm{spin}}}
\newcommand*{\SpinBordism}{\Omega^{\mathrm{spin}}}
\newcommand*{\StolzRel}{\mathrm{R}^{\mathrm{spin}}}

\newcommand*{\BottMfd}{\mathrm{Bt}}

\newcommand*{\sHigComRed}{\bar{\mathfrak{c}}^{\mathrm{red}}}
\newcommand*{\sHigCorRed}{\mathfrak{c}^{\mathrm{red}}}

\newcommand*{\Ball}{\mathrm{B}}
\newcommand{\clBall}{\overline{\mathrm{B}}}

\newcommand*{\red}{\mathrm{red}}
\NewDocumentCommand{\Cstar}{}{\ensuremath{\mathrm{C}^*}}
\NewDocumentCommand{\textCstar}{}{\ensuremath{\mathrm{C}^*\!}}
\NewDocumentCommand{\CstarRed}{}{\Cstar_{\red}}

\NewDocumentCommand{\LSym}{}{\mathrm{L}}

\NewDocumentCommand{\D}{}{\mathop{}\!\mathrm{d}}

\NewDocumentCommand \RoeSymbol {o} {
	\mathrm{C}^{\ast}
	\IfNoValueF{#1}{_{#1}}
}

\NewDocumentCommand \VanishSymbol {o} {
	\mathrm{N}^{\ast}
	\IfNoValueF{#1}{_{#1}}
}

\NewDocumentCommand \FiproSymbol {o} {
	\mathrm{E}^{\ast}
	\IfNoValueF{#1}{_{#1}}
}

\NewDocumentCommand \RoePlaceholder {o} {
\RoeSymbol[
	\IfNoValueF{#1}{#1,}
	\mathrm{?}
]
}

\NewDocumentCommand \Roe {o} {\RoeSymbol[#1]}
\NewDocumentCommand \Fipro {o} {\FiproSymbol[#1]}

\NewDocumentCommand \varRoe {o} {
	\RoeSymbol[\sim\IfNoValueF{#1}{,#1}]
}

\NewDocumentCommand \Loc {o} {
	\RoeSymbol[
		\IfNoValueF{#1}{#1,}
		\LSym
	]
}

\NewDocumentCommand \LocVanish {o} {
	\VanishSymbol[
		\IfNoValueF{#1}{#1,}
    \LSym
	]
}

\NewDocumentCommand \FiproLoc {o} {
	\FiproSymbol[
		\IfNoValueF{#1}{#1,}
		\LSym
	]
}

\NewDocumentCommand \varLoc {o} {
	\RoeSymbol[
		\sim,
		\IfNoValueF{#1}{#1,}
		\LSym
	]
}

\NewDocumentCommand \Locz {o} {
	\RoeSymbol[
		\IfNoValueF{#1}{#1,}
		\LSym,0
	]
}

\NewDocumentCommand \FiproLocz {o} {
	\FiproSymbol[
		\IfNoValueF{#1}{#1,}
		\LSym,0
	]
}

\NewDocumentCommand \varLocz {o} {
	\RoeSymbol[
		\sim,
		\IfNoValueF{#1}{#1,}
		\LSym,0
	]
}

\DeclareMathOperator{\im}{im}
\newcommand{\id}{\mathrm{id}}

\DeclareMathOperator{\Var}{\operatorname{Var}}

\DeclareMathOperator{\Ind}{Ind}

\DeclareMathOperator{\ev}{ev}

\NewDocumentCommand{\blank}{}{{-}}

\newcommand*{\tensmax}{\mathbin{\otimes_{\max}}}
\newcommand*{\tens}{\otimes}

\numberwithin{equation}{section} 

\author{Christopher Wulff\thanks{
Mathematisches Institut, Georg--August--Universität Göttingen, Bunsenstraße 3--5, 37073 Göttingen, Germany\newline
\href{mailto:christopher.wulff@mathematik.uni-goettingen.de}{christopher.wulff@mathematik.uni-goettingen.de}}
\and Rudolf Zeidler\thanks{Universität Potsdam, Institut für Mathematik, Karl-Liebknecht-Str.~24--25, 14476 Potsdam, Germany\newline
\href{mailto:rudolf.zeidler@uni-potsdam.de}{rudolf.zeidler@uni-potsdam.de}}
}

\title{Duality pairings with the analytic structure group}
\date{}

\begin{document}

\maketitle

\begin{abstract}
  We construct a slant product $\mathrm{S}^{G\times H}_p(X\times Y)\otimes \mathrm{K}_{-q}(\bar{\mathfrak{c}}^{\mathrm{red}} Y\rtimes H)\to \mathrm{K}_{p-q}(\mathrm{C}^\ast_G X)$ on the analytic structure group of Higson and Roe and the K-theory of the stable Higson compactification taking values in the (equivariant) Roe algebra.
  This complements the slant products constructed in earlier work of Engel and the authors ({\tt arXiv:1909.03777 [math.KT]}).
  The distinguishing feature of our new slant product is that it specializes to a duality pairing $\mathrm{S}^H_p(Y) \otimes \mathrm{K}_{-p}(\bar{\mathfrak{c}}^{\mathrm{red}} (Y)\rtimes H)\to \mathbb{Z}$ which can be used to extract numerical invariants out of elements in the analytic structure group such as rho-invariants associated to positive scalar curvature metrics.
\end{abstract}

\setcounter{tocdepth}{2}
\tableofcontents

\section{Introduction}
The analytic exact sequence of Higson and Roe~\cite{MappingSurgeryToAnalysisIII} is the target for primary and secondary index invariants which arise, for instance, in surgery theory or in the study of metrics with positive scalar curvature (see \cite{PiazzaSchick:StolzPSC,XieYuPscLocAlg,zeidler_secondary} for the latter). 
In \cite{EngelWulffZeidler}, Engel and the authors constructed and analyzed in depth natural slant products on the Higson--Roe exact sequence of the form
\[\xymatrix{
\Strg_p^{G\times H}(X \times Y) \ar[r] \ar[d]^{/ \theta} & \K_p^{G \times H}(X \times Y) \ar[r] \ar[d]^{/ \theta} & \K_p(\Roe[G \times H](X \times Y)) \ar[d]^{/ \theta} \ar[r]^-{\partial} & \Strg_{p-1}^{G \times H}(X \times Y) \ar[d]^{/ \theta}\\
\Strg_{p-q}^G(X) \ar[r] & \K_{p-q}^G(X) \ar[r] & \K_{p-q}(\Roe[G] X) \ar[r]^-{\partial} & \K_{p-1-q}^G(X).
}\]
for proper metric spaces $X,Y$, where $Y$ has bounded geometry, endowed with proper isometric actions of countable discrete groups $G$ and $H$, respectively. 
The slant products are by arbitrary elements $\theta \in \K_{1-q}(\sHigCorRed Y \rtimes_{\mu} H)$, that is, the $\K$-theory of the crossed product of the stable Higson corona $\sHigCorRed Y$ by the induced $H$-action.
Here, as in \cite{EngelWulffZeidler}, \enquote{\(\rtimes_{\mu}\)} denotes any exact crossed product functor in the sense of \cite[Definition~3.1]{BaumGuentWillExactCrossed}.
For instance, we may use the maximal crossed product.

These slant products are dual to the exterior products
\begin{align*}
  \Strg_p^{G}(X) \otimes \K^H_q(Y) &\to \Strg_{p+q}^{G \times H}(X \times Y), \\
  \K_{p}(\Roe[G](X)) \otimes \K_{q}(\Roe[H](Y)) &\to \K_{p+q}(\Roe[G \times H](X \times Y)),
\end{align*}
and were designed to detect non-vanishing of elements in the Higson--Roe exact sequence, in particular of secondary invariants in the structure group which arise as a product, and of primary index invariants in the K-theory of the equivariant Roe algebra \(\Roe[G](X)\).
The slant product involving the Roe algebra can be specialized to a pairing
\begin{equation*}
  \K_{p}(\Roe[H](Y)) \otimes \K_{-p}(\sHigCorRed Y \rtimes_\mu H) \to \K_{0}(\C) = \Z, 
\end{equation*}
which goes back to \textcite{EmeMey}.
In other words, elements of the group \(\K_{-\ast}(\sHigCorRed X \rtimes_\mu G)\) can be viewed as numerical invariants defined on the K-theory of the equivariant Roe algebra \(\K_{\ast}(\Roe[G](X))\).

The motivation of the present article is to construct such a duality pairing which involves the analytic structure group and thereby obtain numerical invariants on the analytic structure group.
To this end, we amend the picture of \cite{EngelWulffZeidler} by constructing another slant product
\begin{equation}\Strg^{G\times H}_p(X\times Y)\otimes\K_{-q}(\sHigComRed Y\rtimes_{\mu} H)\to\K_{p-q}(\Roe[G]X) \label{eq:new_slant}\end{equation}
between the analytic structure group and the $\K$-theory of the stable Higson compactification (rather than the stable Higson \emph{corona} which was involved in the slant product of \cite{EngelWulffZeidler}), taking values in the K-theory of a Roe algebra.
The slant product \labelcref{eq:new_slant} then specializes to a duality pairing
\[
 \Strg^H_p(Y) \otimes \K_{-p}(\sHigComRed (Y)\rtimes_\mu H)\to \K_{0}(\C) = \Z
\]
which can be used to define numerical invariants on the analytic structure group (see \cref{sec:numerical}) which, in turn, can be applied to extract numerical secondary invariants associated to positive scalar curvature metrics (see \cref{sec:PSC_application}).

This abstract construction raises the question if suitable elements of \(\K_{\ast+1}(\sHigComRed(\mathcal{X}) \rtimes_\mu G)\) can be found to obtain interesting numerical invariants on the analytic structure group.
In many cases this reduces to a topological question because the group \(\K_{-p}(\sHigComRed (Y)\rtimes_\mu H)\) can be related to a certain relative \(\K\)-theory group.
Indeed, as we discuss in \cref{sec:numerical}, there is a canonical map 
\begin{equation*}
  \delta_{Y} \colon \K_{-p}(\sHigComRed(Y) \rtimes_\mu H) \to \K_H^{p+1}(\Eub H, Y),
  \end{equation*}
  where \(\Eub H\) is the classifying space for proper \(H\)-actions.
  Moreover, this map \(\delta_Y\) is surjective if \(H\) admits an \(H\)-finite \(\Eub H\) and has a \(\gamma\)-element, see~\cref{prop:dual_boundary_surj}.
  However, we emphasize that the slant product \labelcref{eq:new_slant} exists unconditionally and it remains an intriguing question in which situations interesting elements of \(\K_{-p}(\sHigComRed(Y) \rtimes_\mu H)\) can be found independently of assumptions on the Baum--Connes assembly map (such as the existence of a \(\gamma\)-element).

Constructing the product \labelcref{eq:new_slant} requires, in comparison to the one of \cite{EngelWulffZeidler}, the somewhat stronger assumption of continuously bounded geometry on $Y$ which on complete Riemannian manifolds is implied by a lower Ricci bound, see \Cref{defn:contboundedGeometry,prop:ricci_implies_cbg}. 
Moreover, we also discuss relative versions of the slant products, which had not been done in \cite{EngelWulffZeidler}, and show their compatibility with each other.

\paragraph{Acknowledgements.}
We are grateful for the support by the SPP 2026 \enquote{Geometry at Infinity}.
Funded by the Deutsche Forschungsgemeinschaft (DFG, German Research Foundation) – Project numbers
338480246; %
390685587; %
427320536; %
523079177. %

\section{The Higson--Roe exact sequence}

We start by recalling the construction of the Higson--Roe exact sequence of a proper metric space $X$, using the notation of \cite[Section 3.1]{EngelWulffZeidler}.

By an $X$-module we mean a pair $(H_X,\rho_X)$ consisting of a separable Hilbert space $H_X$ together with a non-degenerate $*$-representation $\rho_X\colon\Cz(X)\to\Lin(H_X)$.
The associated \emph{Roe algebra} $\Roe(\rho_X)$ is defined as the sub-\textCstar-algebra of $\Lin(H_X)$ generated by all locally compact operators of finite propagation.

Here we say an operator $T \in \Lin(H_X)$ is \emph{locally compact} if $\rho_X(f) T$ and $T \rho_X(f)$ are compact operators for all $f \in \Cz(X)$.
Moreover, \(T \in \Lin(H_X)\) has \emph{finite propagation} if there exists an $R > 0$ such that $\rho_X(f) T \rho_X(g) = 0$ provided that the supports of $f$ and $g$ have distance greater than \(R\) from each other.
An \(X\)-module \((H_X, \rho_X)\) is called \emph{ample} if only the zero function in $\Cz(X)$ acts by a compact operator on \(H_X\) via \(\rho_X\).
 From now on we shall assume that we have fixed an ample \(X\)-module \((H_X, \rho_X)\).
 Note that we usually suppress the \(X\)-module from the notation because the K-theory of Roe algebras does not depend on the choice of ample \(X\)-module up to canonical isomorphism \cites[Corollary~6.3.13]{higson_roe}[Theorem~5.1.15]{WillettYuHigherIndexTheory}.
 
 Next, we de define the \emph{localization algebra} $\Loc X$ to be the sub-\Cstar-algebra of $\Cb([1,\infty),\Roe X)$ generated by the bounded and uniformly continuous functions $L\colon [1,\infty) \to\Roe X$ such that the propagation
of $L(t)$ is finite for all $t\geq 1$ and tends to zero as $t\to\infty$.
For ample \(X\)-modules its $\K$-theory is canonically isomorphic to the locally finite $\K$-homology of the space $X$, see e.g.~\cite{yu_localization,qiao_roe} and \cite[Chapters~6--7]{WillettYuHigherIndexTheory}.
Let $\Locz X$ be the ideal in \(\Loc X\) consisting of those \(L \in \Loc X\) with \(L(1) = 0\).
We define the \emph{analytic structure group of $X$} as the \(\K\)-theory of \(\Locz X\), denoted by $\Strg_\ast(X)\coloneqq\K_\ast(\Locz X)$.

By construction, we obtain a short exact sequence of \textCstar\nobreakdash-algebras,
\[0\to\Locz X\to\Loc X\xrightarrow{\operatorname{ev}_1}\Roe X\to 0\]
whose induced long exact sequence is called the \emph{Higson--Roe sequence}
\[\dots\to \K_{*+1}(\Roe X)\to \Strg_*(X)\to \K_*(X)\xrightarrow{\Ind} \K_*(\Roe X)\to\dots\]
where the index map $\Ind=(\operatorname{ev}_1)_*$ is induced by evaluation at $1$.

\textcite{EmeMey} constructed a dual version of the Higson--Roe sequence which we briefly recall now.
A function $f\colon Y\to Z$ from a proper metric space $Y$ into another metric space $Z$ is said to have \emph{vanishing variation} if the function
\[\Var_r f\colon Y\to[0,\infty]\,,\quad x\mapsto\sup\{d(f(x),f(y))\mid y\in B_r(x)\}\]
converges to zero at infinity for all $r>0$.
The \emph{reduced stable Higson compactification} $\sHigComRed(Y)$ is then defined as the \textCstar-algebra of all bounded continuous functions $f\colon Y\to\Lin(\ell^2)$ of vanishing variation such that $f(x)-f(y)\in\Kom$ for all $x,y\in Y$, where $\Kom\coloneqq\Kom(\ell^2)$ denotes the \textCstar-algebra of compact operators on the standard separable infinite dimensional Hilbert space $\ell^2\coloneqq\ell^2(\N)$. It contains $\Cz(Y,\Kom)$ as an ideal and one defines the \emph{reduced stable Higson corona} as the quotient $\sHigCorRed(Y)\coloneqq\sHigComRed(Y)/\Cz(Y,\Kom)$.
We immediately obtain a long exact sequence
\[\dots \to \K_{1-*}(\sHigCorRed Y)\xrightarrow{\mu^*}\K^*(Y)\to\K_{-*}(\sHigComRed Y)\to\K_{-*}(\sHigCorRed Y)\to\dots\]
whose connecting homomorphism $\mu^*$ is called the coarse co-assembly map.

\section{Construction of the non-equivariant slant products}
\label{sec:constr_noneq_slant}

From now on, let $X,Y$ be proper metric spaces. 
Furthermore, we assume that $Y$ has continuously bounded geometry, which is a crucial prerequisite for our constructions of the slant products on the structure groups and hence we make it a standing assumption.
Recall that this property is defined as follows.

\begin{defn}[{Definition 4.1(b) in \cite{EngelWulffZeidler}}]\label{defn:contboundedGeometry}
A metric space $Y$ is said to have \emph{continuously bounded geometry} if for every $r>0$ and \(R > 0\) there exists a constant $K_{r,R}>0$ such that the following conditions hold.
\begin{itemize}
  \item For every \(r > 0\) there is a subset $\hat{Y}_r\subset Y$ such that $Y=\bigcup_{\hat y\in \hat{Y}_r}\Ball_r(\hat y)$ and such that for all $r,R>0$ and \(y \in Y\) the number $\#(\hat{Y}_r\cap \clBall_R(y))$ is bounded by $K_{r,R}$.
  \item For all \(\alpha>0\), we have \(K_\alpha\coloneqq\limsup_{r\to 0}K_{r,\alpha r}<\infty\).
\end{itemize}
\end{defn}

This is a stronger property than the usual bounded geometry of metric spaces. The slant products on $\K$-homology and on the $\K$-theory of the Roe algebra can be defined under the assumption of the usual notion of bounded geometry of metric spaces, but for the definition of the slant products on the structure group we require the stronger continuously bounded geometry.

In contrast, the usual notion of bounded geometry in Riemannian geometry is stronger than having continuously bounded geometry.
In fact, a lower bound on the Ricci curvature suffices as we establish in the following proposition.
The proof is modelled on the proof of Gromov's precompactness theorem, compare e.g.~\cite[Corollary~11.1.13]{Petersen:Riemannian}.
\begin{prop}\label{prop:ricci_implies_cbg}
  Let \(Y\) be a complete Riemannian \(n\)-manifold with a uniform Ricci curvature lower bound \(\Ric \geq \kappa (n-1)\) for some \(\kappa \in \R\).
  Then \(Y\) has continuously bounded geometry.
\end{prop}
\begin{proof}
  Without loss of generality and to simplify some formulas, we may assume that \(\kappa = -1\).
  Let \(v_\rho > 0\) for \(\rho > 0\) denote the volume of a \(\rho\)-ball in hyperbolic \(n\)-space.
  Then the Bishop--Gromov relative volume comparison theorem (see e.g.~\cite[Lemma~7.1.4]{Petersen:Riemannian}) states that for every \(y \in Y\) the function
  \[(0,\infty) \to (0,\infty), \quad \rho \mapsto \frac{\vol(\Ball_\rho(y))}{v_\rho}\]
  is non-increasing and it tends to \(1\) as \(\rho \searrow 0\).
  
  For each \(r > 0\) let \(\hat{Y}_r \subset Y\) be a maximal subset such that the family of balls \(\Ball_{r/2}(\hat{y})\), \(\hat{y} \in \hat{Y}\), is pairwise disjoint.
  Then by maximality, \(Y=\bigcup_{\hat y\in \hat{Y}_r}\Ball_r(\hat y)\).
  
  Now fix \(R > 0\), \(r > 0\) and \(y \in Y\).
  Let \(\hat{y}_1, \dotsc, \hat{y}_N \in \hat{Y}_r \cap \clBall_R(y)\) be any collection of distinct points for some \(N \in \N\).
  Relabel these points so that \(\vol(\Ball_{r/2}(\hat{y}_1))\) is minimal among all \(\vol(\Ball_{r/2}(\hat{y}_i))\) for \(i \in \{1, \dotsc, N\}\).
  Note that for all \(x \in \Ball_{r/2}(\hat{y}_i)\), we have
  \[
	d(x, \hat{y}_1) \leq d(x,\hat{y}_i) + d(\hat{y}_i,y) + d(y,\hat{y}_1) < r/2 + 2R	  
\]
  and thus
  \(\Ball_{r/2}(\hat{y}_i) \subseteq \Ball_{2 R + r/2}(\hat{y}_1)\) for all \(i \in \{1, \dotsc, N\}\).
  Since the balls \(\Ball_{r/2}(\hat{y}_i)\) are pairwise disjoint and \(\vol(\Ball_{r/2}(\hat{y}_1)) \leq \vol(\Ball_{r/2}(\hat{y}_i))\) for all \(i \in \{1, \dotsc, N\}\), we obtain
  \[
	  N \leq \frac{\vol(\Ball_{2R + r/2}(\hat{y_1}))}{\vol(\Ball_{r/2}(\hat{y}_1))} \leq \frac{v_{2R+r/2}}{v_{r/2}},
  \]
  where we have used the Bishop--Gromov volume comparison for the second inequality.
  We conclude that
  \[
	  \#(\hat{Y}_r \cap \clBall_R(y)) \leq \frac{v_{2R+r/2}}{v_{r/2}} \eqqcolon K_{r,R}.
	  \]
	  
 We explicitly have \(v_\rho = \omega_{n-1} \int_{0}^\rho \sinh(t)^{n-1} \D{t}\) for \(\omega_{n-1} > 0\) the volume of the \((n-1)\)-sphere, and a concrete calculation (apply de L'Hospital twice) yields
	\[
	\lim_{r\to 0} K_{r, \alpha r} = \lim_{r\to 0} \frac{v_{r (2\alpha  + 1/2)}}{v_{r/2}} = (4 \alpha + 1) \lim_{r\to 0} \left(\frac{\sinh(r (2\alpha  + 1/2))}{\sinh(r/2)} \right)^{n-1} = (4 \alpha +1)^n < \infty
	\]
	for every \(\alpha > 0\) and \(r > 0\).
	Thus \(Y\) has continuously bounded geometry.
\end{proof}

Let us review two slant products from \cite{EngelWulffZeidler}, before we define our new slant product. The first is the slant product between the $\K$-theory of the Roe algebra and the $\K$-theory of the stable Higson corona (\Cref{defn_coarseslant} below) and the second one is the slant product between $\K$-homology and $\K$-theory of spaces (\Cref{defn_alternativeKhomslant} below).

We fix an ample $X$-module $(H_X,\rho_X)$ and an ample $Y$-module $(H_Y,\rho_Y)$. Then $H_{X\times Y}\coloneqq H_X\otimes H_Y$ together with the tensor product representation $\rho_{X\times Y}\coloneqq\rho_X\otimes\rho_Y\colon\Cz(X\times Y)\to\Lin(H_{X\times Y})$ is an ample $X\times Y$-module.
Furthermore, we define another ample $X$-module $(\tilde H_X,\tilde\rho_X)$ by $\tilde H_X\coloneqq H_X\otimes H_Y\otimes \ell^2$ and 
\begin{equation}\label{eq_varRepOfCzX}
\tilde\rho_X\coloneqq\rho_X\otimes\id_{H_Y\otimes\ell^2}\colon\Cz(X)\to\Lin(\tilde H_X)\,.
\end{equation}

Using the tensor product of $\rho_Y$ and the canonical representation of $\Kom$ on $\elltwo$, we obtain a non-degenerate representation of $\Cz(Y, \Kom)$ on $H_Y \otimes \elltwo$ which, in turn, extends to a strictly continuous representation of the multiplier algebra
\begin{equation}\label{eq_rep_of_Cb}
\bar\rho_Y\colon \Mult(\Cz(Y, \Kom))\to \cB(H_Y \otimes \ell^2).
\end{equation}
Then we define the representation
\begin{equation*}
\tilde\rho_Y\coloneqq \id_{H_X}\otimes \bar\rho_Y\colon\Mult(\Cz(Y, \Kom)) \to \cB(\tilde H_X)
\end{equation*}
whose image commutes with the image of the representation $\tilde\rho_X$.
Moreover, there is a canonical embedding of $\Cb(Y, \Kom)$ into $\Mult(\Cz(Y, \Kom))$.
We also define the representation 
\[\tau\colon\Roe(\rho_{X\times Y})\to\Lin(\tilde H_X)\,,\quad S\mapsto S\otimes\id_{\ell^2}\,.\]

Now, let $\Fipro(\tilde\rho_X)\subset\Lin(\tilde H_X)$ be the sub-\textCstar-algebra generated by all finite propagation operators. We summarize the technical results from Section 4.1.1 of \cite{EngelWulffZeidler}.
\begin{lem}[{Lemmas 4.2, 4.3, 4.5, 4.6 in \cite{EngelWulffZeidler}}]\ 
\label{lem:summarylemma}
\begin{enumerate}
\item The Roe algebra $\Roe(\tilde\rho_X)$ is an ideal in $\Fipro(\tilde\rho_X)$.
\item The images of the representations $\tau$ and $\tilde\rho_Y$ defined above are contained in $\Fipro(\tilde\rho_X)$.
\item The image of the representation $\tau$ commutes up to $\Roe(\tilde\rho_X)$ with the image of $\sHigComRed Y$ under the representation $\tilde\rho_Y$. Hence, by the universal property of the maximal tensor product, there is an induced $*$-homomorphism
\begin{equation*}
\Phi\colon\Roe(\rho_{X\times Y})\otimes_{\max} \sHigComRed Y\to \Fipro(\tilde\rho_X)/\Roe(\tilde\rho_X)
\end{equation*}
determined by $S\otimes f\mapsto [\tau(S)\circ\tilde\rho_Y(f)]$.

\item The $*$-homomorphism $\Phi$ factors through the tensor product $\Roe(\rho_{X \times Y}) \otimes_{\max} \sHigCorRed Y$.
In other words, it defines a $*$-homomorphism
\begin{equation*}
\Psi\colon\Roe(\rho_{X \times Y}) \otimes_{\max} \sHigCorRed Y \to  \Fipro(\tilde\rho_X)/\Roe(\tilde\rho_X).
\end{equation*}
\end{enumerate}\qed
\end{lem}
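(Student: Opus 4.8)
The plan is to verify the four assertions essentially from the definitions, treating (i) and (ii) as bookkeeping and concentrating the real work on the commutator estimate in (iii) and the factorisation in (iv).

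For (i) I would argue on generators: if \(T \in \Fipro(\tilde\rho_X)\) has propagation \(\le R\) and \(S\) is locally compact of finite propagation, then \(TS\) and \(ST\) again have finite propagation, and local compactness follows by inserting an \(X\)-cutoff. Indeed, for \(f \in \Cz(X)\) of compact support one writes \(\tilde\rho_X(f) T = \tilde\rho_X(f) T \tilde\rho_X(\chi)\) with \(\chi \in \Cc(X)\) equal to \(1\) on the \(R\)-neighbourhood of \(\supp f\), so that \(\tilde\rho_X(f) T S = \tilde\rho_X(f) T \bigl(\tilde\rho_X(\chi) S\bigr)\) is compact; passing to the generated \Cstar-algebras by continuity gives the ideal property. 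For (ii), the containment \(\tilde\rho_Y(\sHigComRed Y) \subseteq \Fipro(\tilde\rho_X)\) is immediate because \(\tilde\rho_Y(f) = \id_{H_X} \otimes \bar\rho_Y(f)\) acts as the identity on the \(H_X\)-factor and hence has propagation \(0\). For \(\tau\) one computes \(\tilde\rho_X(f)\tau(S)\tilde\rho_X(g) = \bigl(\rho_{X\times Y}(\pi_X^* f)\, S\, \rho_{X\times Y}(\pi_X^* g)\bigr) \otimes \id_{\elltwo}\), and since the \(X\)-distance bounds the product-metric distance from below, the propagation of \(S\) in \(X \times Y\) bounds that of \(\tau(S)\) relative to \(\tilde\rho_X\) (extending the finite-propagation condition from \(\Cz\)-functions to the pullbacks \(\pi_X^* f\) by strict continuity of the multiplier representation).

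The heart of the argument is (iii). I would first fix a basepoint \(y_0 \in Y\) and decompose \(f \in \sHigComRed Y\) as \(f = f(y_0)\cdot 1 + g\), where \(g(y) \coloneqq f(y) - f(y_0) \in \Kom\); thus \(g \in \Cb(Y,\Kom)\) and, since a constant shift does not change the variation, \(g\) still has vanishing variation. The constant part contributes nothing, since \(\tilde\rho_Y(f(y_0)\cdot 1) = \id_{H_X \otimes H_Y} \otimes f(y_0)\) commutes exactly with \(\tau(S) = S \otimes \id_{\elltwo}\); so it suffices to show \([\tau(S), \tilde\rho_Y(g)] \in \Roe(\tilde\rho_X)\) for such \(g\) and \(S\) of finite propagation \(R\). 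This commutator has finite propagation by (ii), so only local compactness remains. I would split \(g = g_{\mathrm{in}} + g_{\mathrm{out}}\), with \(g_{\mathrm{in}} \in \Cc(Y,\Kom)\) supported on a large compact set and \(g_{\mathrm{out}}\) supported where \(\Var_R g < \varepsilon\). The inner commutator lies in \(\Roe(\tilde\rho_X)\): for an elementary tensor \(\phi \otimes k\) with \(\phi \in \Cc(Y)\), \(k \in \Kom\) and \(h \in \Cc(X)\) one has \(\tilde\rho_X(h)\tau(S)\tilde\rho_Y(\phi\otimes k) = \bigl(\rho_{X\times Y}(\pi_X^* h)\, S\, \rho_{X\times Y}(\pi_Y^* \phi)\bigr) \otimes k\), and finite propagation confines the bracketed operator to a relatively compact subset of \(X \times Y\) on which the local compactness of \(S\) applies, while \(k\) is compact; hence the whole operator is compact.

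The remaining obstacle is to control \(g_{\mathrm{out}}\), and this is the step I expect to be the most delicate. Here I would invoke an operator-valued slow-oscillation estimate of the form \(\|[\tau(S), \tilde\rho_Y(g_{\mathrm{out}})]\| \lesssim \|S\| \cdot \sup_y \Var_R g_{\mathrm{out}}(y) \le C\varepsilon\), where the constant \(C\) is controlled by the bounded-geometry multiplicity of an \(R\)-scale cover of \(Y\) supplied by continuously bounded geometry; making this bound precise for \(\Kom\)-valued \(g\), with a genuinely geometry-controlled constant rather than a pointwise Hadamard estimate, is the crux. Granting it, \([\tau(S), \tilde\rho_Y(g)]\) lies within \(C\varepsilon\) of \([\tau(S), \tilde\rho_Y(g_{\mathrm{in}})] \in \Roe(\tilde\rho_X)\), and letting \(\varepsilon \to 0\) together with norm-closedness of \(\Roe(\tilde\rho_X)\) yields membership; the map \(\Phi\) then exists by the universal property of \(\tensmax\). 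Finally, for (iv) I would show that \(\Phi\) annihilates the ideal \(\Roe(\rho_{X\times Y}) \tensmax \Cz(Y,\Kom)\): on elementary tensors \(S \otimes f\) with \(f \in \Cz(Y,\Kom)\) the same computation (now with \(\phi \in \Cz(Y)\), approximated by compactly supported functions) gives \(\tau(S)\tilde\rho_Y(f) \in \Roe(\tilde\rho_X)\), so \(\Phi(S \otimes f) = 0\) in \(\Fipro(\tilde\rho_X)/\Roe(\tilde\rho_X)\), and by density \(\Phi\) vanishes on the whole ideal. Since the maximal tensor product is exact, \(\Roe(\rho_{X\times Y}) \tensmax \Cz(Y,\Kom)\) is an ideal with quotient \(\Roe(\rho_{X\times Y}) \tensmax \sHigCorRed Y\), and \(\Phi\) descends to the desired \(\Psi\).
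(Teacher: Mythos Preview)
The paper does not prove this lemma at all: it is stated with a terminal \(\qed\) and attributed to Lemmas~4.2, 4.3, 4.5, 4.6 of \cite{EngelWulffZeidler}. Your outline is in the same spirit as that reference, and parts (i), (ii) and (iv) are handled correctly (the maximal tensor product is indeed exact on short exact sequences of ideals, so your last step is fine).

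The genuine gap is in (iii): you explicitly write ``Granting it'' for the commutator bound on \(g_{\mathrm{out}}\), and this is precisely the substantive content---it is what \cite[Lemma~4.5]{EngelWulffZeidler} actually proves. A vague appeal to ``an operator-valued slow-oscillation estimate'' is not a proof; the constant does not come for free. The argument in the reference runs roughly as follows. Using bounded geometry of \(Y\), choose a Borel partition \(Y = \bigsqcup_j Y_j\) into pieces of diameter \(< r\) whose \(R\)-thickenings have intersection multiplicity bounded by some \(N = N(r,R)\). Colour the index set with \(N\) colours so that same-coloured pieces are at mutual distance \(> R\). For a fixed colour \(c\), the projection \(P_c \coloneqq \sum_{j \text{ of colour } c} \id_{H_X} \otimes \rho_Y(\chi_{Y_j}) \otimes \id_{\elltwo}\) satisfies \(\tau(S) P_c = P'_c \tau(S) P_c\) for a slightly enlarged projection \(P'_c\), and on each block the values of \(g_{\mathrm{out}}\) vary by at most \(\sup_y \Var_R g_{\mathrm{out}}(y)\). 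Summing over the \(N\) colours gives \(\|[\tau(S), \tilde\rho_Y(g_{\mathrm{out}})]\| \le C(N)\,\|S\|\,\sup_y \Var_R g_{\mathrm{out}}(y)\). Without this (or an equivalent device), your approximation \(g \approx g_{\mathrm{in}}\) in the Roe-algebra sense does not close.

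A minor remark: you invoke continuously bounded geometry, but as the paper notes just after \Cref{defn:contboundedGeometry}, this particular lemma only needs ordinary bounded geometry of \(Y\); the stronger hypothesis enters later for the localization-algebra versions.
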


\begin{defn}[{Definition 4.7 in \cite{EngelWulffZeidler}}]\label{defn_coarseslant}
The slant product between the $\K$-theory of the Roe algebra and the $\K$-theory of the reduced stable Higson corona is now defined as $(-1)^p$ times the composition
\begin{align*}
\K_p(\Roe(X \times Y)) \otimes \K_{1-q}(\sHigCorRed Y) \ &= \K_p(\Roe(\rho_{X \times Y})) \otimes \K_{1-q}(\sHigCorRed Y)
\\& \xrightarrow{\boxtimes} \ \K_{p+1-q}(\Roe(\rho_{X \times Y}) \otimes_{\max} \sHigCorRed Y)
\\& \xrightarrow{\Psi_*} \ \K_{p+1-q}(\Fipro(\tilde\rho_X)/\Roe(\tilde\rho_X))
\\& \xrightarrow{\partial} \ \K_{p-q}(\Roe(\tilde\rho_X))
\\& = \K_{p-q}(\Roe X),
\end{align*}
where the first arrow is the external product on $\K$-theory, and the third arrow the boundary operator in the corresponding long exact sequence.
\end{defn}

The second slant product that we want to recall is the one between $\K$-homology and $\K$-theory of spaces. For all second countable locally compact Hausdorff spaces it can simply be defined by means of the products in $\KK$-theory or E-theory, but for proper metric spaces of continuously bounded geometry, there is a description in terms of localization algebras which is more suitable for our purposes.

\begin{lem}[{cf.\ Corollary 4.18 and the subsequent paragraph in \cite{EngelWulffZeidler}}]
Suppose that $Y$ has continuously bounded geometry.
Let $T \in \Loc(\rho_{X \times Y})$ and $f \in \sHigComRed Y$.
Then
\begin{equation*}
	t \mapsto \left[ \tau(T_t), \tilde\rho_Y(f) \right] \in \Cz([1,\infty), \Roe(\tilde\rho_X))
\end{equation*}
and if \(f \in \Cz(Y,\Kom)\), then also
\begin{equation*}
	t \mapsto  \tau(T_t)\circ \tilde\rho_Y(f) \in \Loc(\tilde\rho_X)\,.
\end{equation*}
Hence there is a well-defined $*$-homomorphism
\begin{align*}
\Upsilon_{\LSym} \colon \Loc(\rho_{X\times Y}) \otimes \Cz(Y, \Kom) & \to \Loc(\tilde\rho_X) / \Cz([1,\infty),\Roe(\tilde\rho_X)),\label{eq_defn_Ypsilon}\\
(T_t)_{t \in [1,\infty)} \otimes f & \mapsto \left[ (\tau(T_t) \circ \tilde\rho_Y(f))_{t \in [1,\infty)} \right].\notag
\end{align*}
\qed
\end{lem}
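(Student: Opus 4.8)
The plan is to establish the two displayed assertions in turn and then read off $\Upsilon_{\LSym}$ from them, the guiding principle being that the first assertion supplies exactly the commutation-up-to-ideal that makes the assembled map multiplicative, while the second guarantees that its image lands in the localization algebra.

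For the first assertion, fix $f\in\sHigComRed Y$. For each $t$ the operator $T_t$ lies in $\Roe(\rho_{X\times Y})$, so \Cref{lem:summarylemma}\,(iii) already gives $[\tau(T_t),\tilde\rho_Y(f)]\in\Roe(\tilde\rho_X)$ pointwise; continuity in $t$ is automatic because $S\mapsto[\tau(S),\tilde\rho_Y(f)]$ is bounded and linear and $t\mapsto T_t$ is uniformly continuous. The substance is vanishing as $t\to\infty$. For this I would invoke the quantitative commutator estimate underlying \cite[Corollary~4.18]{EngelWulffZeidler}, of the shape
\[
\lVert [\tau(S),\tilde\rho_Y(f)] \rVert \leq C_{R}\,\lVert S\rVert\,\sup_{y\in Y}\Var_{R} f(y), \qquad R=\propagation(S),
\]
whose constant $C_R$ is governed by the covering-multiplicity constants $K_{R,R}$ of \Cref{defn:contboundedGeometry}; continuously bounded geometry is precisely the hypothesis $K_\alpha<\infty$ that keeps $C_R$ bounded as $R\searrow 0$, which is why it is needed here but not for \Cref{defn_coarseslant}, where the propagation stays fixed. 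It then remains to prove the elementary fact that $\sup_{y}\Var_r f(y)\to 0$ as $r\to 0$: given $\varepsilon>0$, vanishing variation yields a compact $K\subset Y$ with $\Var_1 f<\varepsilon$ off $K$, and uniform continuity of $f$ on the (compact, by properness) set $\{y:d(y,K)\leq 1\}$ supplies $\delta\in(0,1)$ controlling $K$, so that $\Var_r f<\varepsilon$ everywhere once $r<\delta$. Since $\lVert T_t\rVert$ is bounded and $\propagation(T_t)\to 0$, the estimate forces $\lVert[\tau(T_t),\tilde\rho_Y(f)]\rVert\to 0$, placing the family in $\Cz([1,\infty),\Roe(\tilde\rho_X))$.

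For the second assertion, fix $f\in\Cz(Y,\Kom)$. Boundedness and uniform continuity of $(\tau(T_t)\tilde\rho_Y(f))_t$ are inherited from $T\in\Loc(\rho_{X\times Y})$, and its propagation over $X$ is at most $\propagation(T_t)\to 0$ because $\tilde\rho_Y(f)$ commutes with $\tilde\rho_X(\Cz(X))$ and hence has propagation $0$. The one substantive point is local compactness. Writing $\Cz(Y,\Kom)=\Cz(Y)\tens\Kom$ and using that local compactness is norm-closed, I reduce to elementary tensors $f=\phi\tens k$ with $\phi\in\Cc(Y)$ of compact support and $k\in\Kom$; then $\tilde\rho_Y(f)=\id_{H_X}\tens\rho_Y(\phi)\tens k$, and for $g\in\Cz(X)$ a finite-propagation argument replaces the symbol $g\tens 1$ by $g\tens\phi'\in\Cz(X\times Y)$, with $\phi'\in\Cc(Y)$ equal to $1$ on the $\propagation(T_t)$-neighbourhood of $\supp\phi$, yielding
\[
\tilde\rho_X(g)\,\tau(T_t)\,\tilde\rho_Y(f) = \bigl(\rho_{X\times Y}(g\tens\phi')\,T_t\,\rho_{X\times Y}(1\tens\phi)\bigr)\tens k.
\]
This is compact since $T_t$ is locally compact and $k\in\Kom$; the adjoint is symmetric, so $(\tau(T_t)\tilde\rho_Y(f))_t\in\Loc(\tilde\rho_X)$.

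Finally I would assemble $\Upsilon_{\LSym}$. By the second assertion the prescription $(T_t)_t\tens f\mapsto[(\tau(T_t)\tilde\rho_Y(f))_t]$ lands in $\Loc(\tilde\rho_X)/\Cz([1,\infty),\Roe(\tilde\rho_X))$ and is clearly linear. For multiplicativity and $*$-compatibility one must move a factor $\tilde\rho_Y(f)$ past a factor $\tau(T'_t)$, and the resulting discrepancy is exactly a commutator family as in the first assertion, hence lies in the ideal $\Cz([1,\infty),\Roe(\tilde\rho_X))$ being divided out; so both relations hold in the quotient. As $\Cz(Y,\Kom)\cong\Cz(Y)\tens\Kom$ is nuclear the tensor norm is unambiguous, and boundedness of the elementary-tensor map extends it to all of $\Loc(\rho_{X\times Y})\tens\Cz(Y,\Kom)$. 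I expect the main obstacle to be the scale-uniform commutator estimate: confirming that continuously bounded geometry keeps $C_R$ bounded as $R\searrow 0$ and marrying this to $\sup_y\Var_r f(y)\to 0$ is the crux, whereas the local-compactness verification and the algebraic assembly are routine once \Cref{lem:summarylemma} is available.
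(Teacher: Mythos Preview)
Your proposal is correct and is precisely the argument the paper defers to by citing \cite[Corollary~4.18 and the subsequent paragraph]{EngelWulffZeidler}: the paper itself gives no proof beyond the \qed, and you have faithfully reconstructed the key points---the scale-uniform commutator estimate (with continuously bounded geometry controlling the constant as the propagation shrinks), the elementary fact that $\sup_y\Var_r f(y)\to 0$ as $r\to 0$, and the local-compactness check via elementary tensors.  One minor remark: you should observe explicitly that it suffices to verify both displayed assertions on the dense subspace of generating families $T$ with finite propagation tending to zero and then pass to the closure, since a general $T\in\Loc(\rho_{X\times Y})$ need not have propagation tending to zero pointwise; this is implicit in your treatment but worth stating.
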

Furthermore, let $\ev_\infty\colon\Loc(\tilde\rho_X) \to \Loc(\tilde\rho_X) / \Cz([1,\infty), \Roe(\tilde\rho_X))$ be the canonical quotient $*$-homomorphism, which we also think of as an ``evaluation at infinity''. It induces an isomorphism on $\K$-theory, because its kernel is contractible.

Now, the slant product from the next definition corresponds exactly to the $E$-theoretic slant product by Lemma 4.19 in \cite{EngelWulffZeidler}.
\begin{defn}\label{defn_alternativeKhomslant}
We define the slant product between $\K$-homology and $\K$-theory of proper metric spaces $X,Y$ with $Y$ having continuously bounded geometry as the composition
\begin{align*}
	\K_p(X \times Y) \otimes \K^{q}(Y)
    &\cong \K_p(\Loc(\rho_{X \times Y})) \otimes \K_{-q}(\Cz(Y, \Kom)) \\
  &\xrightarrow{\boxtimes} \K_{p-q}(\Loc(\rho_{X\times Y}) \otimes \Cz(Y, \Kom)) \\
	&\xrightarrow{({\Upsilon_{\LSym}})_\ast} \K_{p-q}( \Loc(\tilde\rho_X) / \Cz([1, \infty), \Roe(\tilde\rho_X))) \\
	&\xrightarrow[\cong]{(\ev_\infty)^{-1}} \K_{p-q}(\Loc(\tilde\rho_X)) \cong \K_{p-q}(X)\,.
\end{align*}
\end{defn}

We can now proceed to the definition of our new slant product. 
The relevant constructions can also be found in Section 4 of \cite{EngelWulffZeidler} and we briefly review them.
Let $\FiproLoc(\tilde{\rho}_X)$ be the $\Cstar$-subalgebra of $\Cb([1,\infty),\Fipro(\tilde{\rho}_X))$ generated by the bounded and uniformly continuous functions $S\colon [1,\infty) \to \Fipro(\tilde{\rho}_X)$ such that the propagation of $S(t)$ is finite for all $t \ge 1$ and tends to zero as $t \to \infty$.
Similarly, let $\FiproLocz(\tilde{\rho}_X)$ denote the ideal in $\FiproLoc(\tilde{\rho}_X)$ consisting of functions vanishing at \(1\).
Then $\Loc(\tilde{\rho}_X)$ is an ideal in $\FiproLoc(\tilde{\rho}_X)$.
Moreover, $\Locz(\tilde{\rho}_X)$ is an ideal in each of $\FiproLoc(\tilde{\rho}_X)$, $\FiproLocz(\tilde{\rho}_X)$ and $\Loc(\tilde{\rho}_X)$.
\color{black}

\begin{lem}[{cf.\ proof of Theorem 4.13 in \cite{EngelWulffZeidler}}]
The $*$-homomorphism $\Upsilon_{\LSym}$ extends to a $*$-homomorphism
\begin{align*}
\bar{\Upsilon}_{\LSym} \colon \Loc(\rho_{X \times Y}) \tensmax \sHigComRed Y & \to \FiproLoc(\tilde\rho_X) / \Cz([1,\infty),\Roe(\tilde\rho_X)),\\
(T_t)_{t \in [1,\infty)} \otimes f & \mapsto \left[ (\tau(T_t) \circ \tilde\rho_Y(f))_{t \in [1,\infty)} \right].
\end{align*}
This extension clearly restricts to a $*$-homomorphism
\[\bar{\Upsilon}_{\LSym,0} \colon \Locz(\rho_{X \times Y}) \tensmax \sHigComRed Y \to \FiproLocz(\tilde\rho_X) / \Cz((1,\infty),\Roe(\tilde\rho_X))\,.\]
\qed
\end{lem}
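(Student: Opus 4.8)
The plan is to obtain $\bar\Upsilon_{\LSym}$ from the universal property of the maximal tensor product, in exactly the manner that yields $\Upsilon_{\LSym}$, but enlarging the second tensor factor from $\Cz(Y,\Kom)$ to $\sHigComRed Y$ and correspondingly enlarging the codomain from $\Loc(\tilde\rho_X)$ to $\FiproLoc(\tilde\rho_X)$. First I would introduce two $*$-homomorphisms into the quotient $Q\coloneqq\FiproLoc(\tilde\rho_X)/\Cz([1,\infty),\Roe(\tilde\rho_X))$: a map $\pi_1\colon\Loc(\rho_{X\times Y})\to Q$ sending $T=(T_t)_t$ to the class of $(\tau(T_t))_t$, and a map $\pi_2\colon\sHigComRed Y\to Q$ sending $f$ to the class of the constant function $t\mapsto\tilde\rho_Y(f)$.

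Next I would verify that both maps are well defined, i.e.\ that their representing functions lie in $\FiproLoc(\tilde\rho_X)$. For $\pi_1$, the operator $\tau(T_t)$ lies in $\Fipro(\tilde\rho_X)$ by \cref{lem:summarylemma}, and its propagation is bounded by that of $T_t$, hence finite for each $t$ and tending to zero as $t\to\infty$; together with the boundedness and uniform continuity inherited from $T$, this places $(\tau(T_t))_t$ in $\FiproLoc(\tilde\rho_X)$. For $\pi_2$, the operator $\tilde\rho_Y(f)=\id_{H_X}\otimes\bar\rho_Y(f)$ acts as the identity in the $X$-direction and therefore has zero propagation; since it lies in $\Fipro(\tilde\rho_X)$ by \cref{lem:summarylemma}, the constant function $t\mapsto\tilde\rho_Y(f)$ is one of the generators of $\FiproLoc(\tilde\rho_X)$. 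Both $\pi_1$ and $\pi_2$ are $*$-homomorphisms because $\tau$ and the restriction of $\tilde\rho_Y$ to $\sHigComRed Y$ are, and applying them pointwise (respectively forming constant functions) followed by the quotient map preserves this structure.

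The decisive step is to show that the ranges of $\pi_1$ and $\pi_2$ commute in $Q$. For $T\in\Loc(\rho_{X\times Y})$ and $f\in\sHigComRed Y$ the commutator of the two representing functions is $([\tau(T_t),\tilde\rho_Y(f)])_t$, and this lies in $\Cz([1,\infty),\Roe(\tilde\rho_X))$ precisely by the (unlabelled) lemma preceding \cref{defn_alternativeKhomslant}; this is the only genuinely analytic input, and it is handed to us. With commuting ranges established, the universal property of $\tensmax$ produces a $*$-homomorphism $\bar\Upsilon_{\LSym}\colon\Loc(\rho_{X\times Y})\tensmax\sHigComRed Y\to Q$ with $\bar\Upsilon_{\LSym}((T_t)_t\otimes f)=[(\tau(T_t)\tilde\rho_Y(f))_t]$, where I use that $\FiproLoc(\tilde\rho_X)$ is a $\Cstar$-algebra so that the product $(\tau(T_t)\tilde\rho_Y(f))_t$ automatically lies in it.

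Finally I would confirm that $\bar\Upsilon_{\LSym}$ really extends $\Upsilon_{\LSym}$ and restricts as claimed. Since $\Cz(Y,\Kom)$ is nuclear, the domain $\Loc(\rho_{X\times Y})\otimes\Cz(Y,\Kom)$ of $\Upsilon_{\LSym}$ coincides with $\Loc(\rho_{X\times Y})\tensmax\Cz(Y,\Kom)$, which embeds into $\Loc(\rho_{X\times Y})\tensmax\sHigComRed Y$ by exactness of $\tensmax$ applied to the ideal $\Cz(Y,\Kom)\trianglelefteq\sHigComRed Y$. On elementary tensors with $f\in\Cz(Y,\Kom)$, the same recalled lemma shows $(\tau(T_t)\tilde\rho_Y(f))_t$ already lies in the subalgebra $\Loc(\tilde\rho_X)$, so the two $*$-homomorphisms agree there and hence on the whole subalgebra. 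For the restriction, if $T\in\Locz(\rho_{X\times Y})$ then $T_1=0$, so $\tau(T_1)\tilde\rho_Y(f)=0$; the image function therefore vanishes at $t=1$ and lands in $\FiproLocz(\tilde\rho_X)$ modulo $\Cz((1,\infty),\Roe(\tilde\rho_X))$, giving $\bar\Upsilon_{\LSym,0}$. The main obstacle is thus not analytic but organizational: keeping straight the enlargement of the codomain to $\FiproLoc(\tilde\rho_X)$ and the matching of the boundary ideals $\Cz([1,\infty),\Roe(\tilde\rho_X))$ and $\Cz((1,\infty),\Roe(\tilde\rho_X))$.
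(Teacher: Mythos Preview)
Your proposal is correct and follows exactly the approach the paper has in mind: the lemma is stated without proof (only a \qed and a reference to \cite{EngelWulffZeidler}), and the intended argument is precisely the universal-property construction you give, using the commutator statement from the unlabelled lemma preceding \cref{defn_alternativeKhomslant} as the key analytic input. One minor addition that makes the restriction step airtight: the quotient $\FiproLocz(\tilde\rho_X)/\Cz((1,\infty),\Roe(\tilde\rho_X))$ sits inside $Q$ as the kernel of the evaluation-at-$1$ map $Q\to\Fipro(\tilde\rho_X)/\Roe(\tilde\rho_X)$ (cf.\ the $3\times 3$ diagram in the proof of \cref{lem:structuregroupslantcompatiblewithRoestableHigsoncoronaslant}), so it is a closed ideal and the density argument for elementary tensors indeed yields the factorization $\bar\Upsilon_{\LSym,0}$.
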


\begin{defn}\label{defn_structuregroupslant}
The slant product between the structure group and the $\K$-theory of the stable Higson compactification is defined as the negative\footnote{The heuristic reason for this sign is that the two boundary maps appearing in the composition correspond once to the boundary $1$ of $[1,\infty)$ and once to the boundary $\infty$ of $(1,\infty]$, so loosely speaking they should correspond to the negative of one another. This idea will become precise in the proof of \Cref{lem:structuregroupslantcompatiblewithhomologycohomologyslant}.} of the composition
\begin{align*}
\Strg_p(X\times Y)\otimes\K_{-q}(\sHigComRed Y)&\cong \K_p(\Locz(\rho_{X\times Y})) \otimes\K_{-q}(\sHigComRed Y)
\\& \xrightarrow{\boxtimes} \K_{p-q}(\Locz(\rho_{X \times Y}) \otimes_{\max} \sHigComRed Y)
\\& \xrightarrow{(\bar{\Upsilon}_{\LSym,0})_*} \K_{p-q}(\FiproLocz(\tilde\rho_X) / \Cz((1,\infty),\Roe(\tilde\rho_X)))
\\& \xrightarrow{\partial} \K_{p-q-1}( \Cz((1,\infty),\Roe(\tilde\rho_X)))
\\&\xrightarrow[\cong]{\partial^{-1}}\K_{p-q}(\Roe(\tilde\rho_X))
= \K_{p-q}(\Roe X)
\end{align*}
where the two boundary maps are the ones associated to the short exact sequences
\[\xymatrix@C=3ex@R=1ex{
0\ar[r]&\Cz((1,\infty),\Roe(\tilde\rho_X))\ar[r]\ar@{=}[d]&\FiproLocz(\tilde\rho_X)\ar[r]&\frac{\FiproLocz(\tilde\rho_X)}{\Cz((1,\infty),\Roe(\tilde\rho_X))}\ar[r]&0
\\0\ar[r]&\Cz((1,\infty),\Roe(\tilde\rho_X))\ar[r]&\Cz([1,\infty),\Roe(\tilde\rho_X))\ar[r]&\Roe(\tilde\rho_X)\ar[r]&0
}\]
of \textCstar-algebras.
\end{defn}

\begin{lem}\label{lem_LocSlantNatural}
The slant product between the structure group and the $\K$-theory of the stable Higson compactification is up to canonical isomorphism independent of the choice of ample modules and natural under pairs of continuous coarse maps $\alpha\colon X\to X'$ and $\beta\colon Y\to Y'$ in the sense that
\begin{equation*}%
\alpha_*(x/\beta^*(\theta))=(\alpha\times \beta)_*(x)/\theta
\end{equation*}
for all $x\in \Strg_*(X\times Y)$ and $\theta\in \K_*(\sHigComRed Y')$.
\end{lem}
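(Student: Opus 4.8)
The plan is to regard the slant product of \Cref{defn_structuregroupslant} as a composition of four natural transformations—the external K-theory product, the map induced by $\bar{\Upsilon}_{\LSym,0}$, and the two connecting homomorphisms—and to produce a single coherent system of covering isometries that intertwines all four stages with the functorial maps $(\alpha\times\beta)_\ast$, $\beta^\ast$ and $\alpha_\ast$. Since naturality of the external product and of boundary maps (with respect to morphisms of short exact sequences) is automatic, the whole assertion reduces to the middle stage, exactly as for the slant products of \cite{EngelWulffZeidler}. Module independence is the special case $\alpha=\id_X$, $\beta=\id_Y$ with source and target carrying possibly different ample modules, so it follows from the same argument once the covering isometry for the identity is taken to implement the canonical comparison isomorphism on K-theory.

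First I would fix covering isometries $V_\alpha\colon H_X\to H_{X'}$ and $V_\beta\colon H_Y\to H_{Y'}$ for the continuous coarse maps $\alpha$ and $\beta$, which exist by the standard theory (see e.g.\ \cite[Chapter~6]{higson_roe} or \cite[Chapter~5]{WillettYuHigherIndexTheory}); here and below primes denote the corresponding objects built from $X'$, $Y'$. Then $\Ad(V_\alpha)$ and $\Ad(W)$, where $W\coloneqq V_\alpha\tens V_\beta$, induce $\alpha_\ast$ and $(\alpha\times\beta)_\ast$ on the K-theory of the relevant Roe and localization algebras, while $V_\beta$ serves to intertwine the stable Higson representation with the pullback $\beta^\ast$. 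The crucial observation is that the auxiliary module $\tilde\rho_X=\rho_X\tens\id_{H_Y\tens\elltwo}$ carries the $X$-coordinate entirely in its first tensor factor, so that $\tilde V_X\coloneqq V_\alpha\tens V_\beta\tens\id_{\elltwo}=W\tens\id_{\elltwo}\colon\tilde H_X\to\tilde H_{X'}$ is again a covering isometry for $\alpha$; choosing it in this factorised form is precisely what makes the $X$-side and the $Y$-side compatible at once. With these choices $\tau$ is intertwined \emph{exactly}: from $W^\ast W=\id$ one computes $\tilde V_X\,\tau(S)=\tau'(\Ad(W)(S))\,\tilde V_X$ for every $S\in\Roe(\rho_{X\times Y})$. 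The representation of the stable Higson compactification is intertwined only modulo the relevant ideal: for $f\in\sHigComRed Y'$ one has $\beta^\ast f=f\circ\beta\in\sHigComRed Y$, and the vanishing-variation condition together with the relation $f(x)-f(y)\in\Kom$ and the finite propagation of $V_\beta$ shows that $(V_\beta\tens\id_{\elltwo})\,\bar\rho_Y(\beta^\ast f)-\bar\rho_{Y'}(f)\,(V_\beta\tens\id_{\elltwo})$ is a compact operator $H_Y\tens\elltwo\to H_{Y'}\tens\elltwo$, whence $\tilde V_X\,\tilde\rho_Y(\beta^\ast f)-\tilde\rho_{Y'}(f)\,\tilde V_X\in\Roe(\tilde\rho_X)$.

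Combining the two intertwining relations, I would check that the square
\[
\begin{tikzcd}[column sep=huge]
\Locz(\rho_{X\times Y})\tensmax\sHigComRed Y' \ar[r,"{\Ad(W)\tens\id}"] \ar[d,"{\id\tens\beta^\ast}"'] & \Locz(\rho_{X'\times Y'})\tensmax\sHigComRed Y' \ar[d,"{\bar\Upsilon'_{\LSym,0}}"] \\
\Locz(\rho_{X\times Y})\tensmax\sHigComRed Y \ar[r,"{\Ad(\tilde V_X)\circ\bar\Upsilon_{\LSym,0}}"'] & \FiproLocz(\tilde\rho_{X'})/\Cz((1,\infty),\Roe(\tilde\rho_{X'}))
\end{tikzcd}
\]
commutes modulo $\Cz((1,\infty),\Roe(\tilde\rho_{X'}))$: on an elementary tensor $(T_t)_t\tens f$ the two composites differ by a family whose value at each $t$ is $\tau'(\Ad(W)(T_t))$ times the error term produced above, and the same estimates that make $\bar\Upsilon_{\LSym,0}$ well defined (the lemma preceding \Cref{defn_structuregroupslant}, cf.\ \Cref{lem:summarylemma}) show this family lies in $\Cz((1,\infty),\Roe(\tilde\rho_{X'}))$. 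Hence the induced square on K-theory commutes. Finally, $\Ad(\tilde V_X)$ is a morphism from each of the two short exact sequences of \Cref{defn_structuregroupslant} for $X$ to the corresponding one for $X'$, so it commutes with both boundary maps; since it induces $\alpha_\ast$ under the identification $\K_\ast(\Roe(\tilde\rho_X))=\K_\ast(\Roe X)$, assembling the stages yields $\alpha_\ast(x/\beta^\ast\theta)=(\alpha\times\beta)_\ast(x)/\theta$, and the $\alpha=\id$, $\beta=\id$ case gives module independence.

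The main obstacle is the Higson-side intertwining together with the bookkeeping of ideals: one must confirm that the error $(V_\beta\tens\id_{\elltwo})\,\bar\rho_Y(\beta^\ast f)-\bar\rho_{Y'}(f)\,(V_\beta\tens\id_{\elltwo})$ is genuinely compact using only vanishing variation and the coarseness of $\beta$, and that after multiplication by the locally compact finite-propagation operators $\tau'(\Ad(W)(T_t))$ the resulting error family vanishes at $t=1$ and decays as $t\to\infty$ so as to land in $\Cz((1,\infty),\Roe(\tilde\rho_{X'}))$ rather than merely in $\Cb([1,\infty),\Roe(\tilde\rho_{X'}))$. A secondary technical point is that conjugation by the isometries $\tilde V_X$ and $W$ need only preserve the various propagation algebras up to these ideals, which is all that is required for the induced maps on K-theory.
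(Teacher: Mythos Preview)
Your overall strategy—reduce to checking that the square involving $\bar{\Upsilon}_{\LSym,0}$ commutes after conjugation by covering isometries, then invoke naturality of the external product and of boundary maps—is exactly the route the paper takes (it simply cites \cite[Theorems~4.28, 4.31]{EngelWulffZeidler} and \cite[Remark~4.32]{EngelWulffZeidler}). However, two of your technical steps are incorrect as stated.

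First, the assertion that $D\coloneqq(V_\beta\tens\id_{\elltwo})\,\bar\rho_Y(\beta^\ast f)-\bar\rho_{Y'}(f)\,(V_\beta\tens\id_{\elltwo})$ is a compact operator is false. Take $Y=Y'=\R$, $H_Y=L^2(\R)$, $\beta=\id$, $V_\beta$ the unit shift, and $f(y)=\chi(y)\,p$ with $\chi$ a smooth cut-off and $p$ a rank-one projection. The difference becomes $(V_\beta\tens\id)\,\bar\rho_Y(h)$ with $h(y)=(\chi(y)-\chi(y+1))p\in\Cc(\R,\Kom)$; but $\bar\rho_Y(h)=M_{\chi-\chi(\cdot+1)}\tens p$ is not compact, since multiplication by a nonzero compactly supported function on $L^2(\R)$ is not compact. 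Consequently your deduction that $\tilde V_X\,\tilde\rho_Y(\beta^\ast f)-\tilde\rho_{Y'}(f)\,\tilde V_X$ lies in the Roe ideal also fails. The correct argument, as in \cite[Theorem~4.28]{EngelWulffZeidler}, does not isolate $D$: one shows directly that after multiplying by $\tau'(\Ad(W)(T_t))$ the total error lies in $\Roe(\tilde\rho_{X'})$, using that $T_t$ is locally compact with respect to \emph{both} factors of $X\times Y$ and that $D$ is locally compact in the $Y$-direction with finite propagation.

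Second, a single covering isometry $W$ does not implement functoriality on $\Locz$: if $W$ has positive propagation, then $\Ad(W)(T_t)$ has propagation bounded below and $(\Ad(W)(T_t))_t\notin\Locz(\rho_{X'\times Y'})$. One needs continuous \emph{families} $(V_{\alpha,t})_t$, $(V_{\beta,t})_t$ of covering isometries with propagation tending to zero. Such families are available for \emph{uniformly} continuous coarse maps, which is why the paper (following \cite{EngelWulffZeidler}) first treats that case; the extension to general continuous coarse maps requires the additional trick of \cite[Remark~4.32]{EngelWulffZeidler}, which you have not addressed.
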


\begin{proof}
The naturality under uniformly continuous coarse maps is proven completely analogously to \cite[Theorem 4.31]{EngelWulffZeidler} along the lines of \cite[Theorem 4.28]{EngelWulffZeidler}. The generalization to continuous coarse maps works with the trick presented in \cite[Remark 4.32]{EngelWulffZeidler}.

Independence of the representations follows by applying the naturality for the identity maps $\alpha=\id_X,\beta=\id_Y$, if one considers different ample modules over source and target.
\end{proof}

We can now check the compatibility of this slant product with the other two under the maps of the Higson--Roe exact sequence and its dual.

\begin{lem}\label{lem:structuregroupslantcompatiblewithRoestableHigsoncoronaslant}
The diagram
\[\xymatrix@C=10ex{
\K_p(\Roe(X\times Y))\otimes\K_{1-q}(\sHigComRed Y)\ar[r]^-{\partial\otimes\id}\ar[d]
&\Strg_{p-1}(X\times Y)\otimes\K_{1-q}(\sHigComRed Y)\ar[d]^-{/}
\\\K_p(\Roe(X\times Y))\otimes\K_{1-q}(\sHigCorRed Y)\ar[r]^-{/}
&\K_{p-q}(\Roe(X))
}\]
commutes up to the sign $(-1)^p$.
\end{lem}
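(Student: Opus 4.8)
The plan is to unwind \cref{defn_coarseslant,defn_structuregroupslant}, absorb the two built-in signs, and reduce the claim to a single identity between connecting homomorphisms of short exact sequences of the $\tilde\rho_X$-algebras, which I then settle using the standard anticommutativity of the two corner maps of a $3\times 3$ diagram. Fix $x\in\K_p(\Roe(X\times Y))$ and $\theta\in\K_{1-q}(\sHigComRed Y)$, put $w\coloneqq x\boxtimes\theta\in\K_{p+1-q}(\Roe(\rho_{X\times Y})\tensmax\sHigComRed Y)$, and let $[\theta]\in\K_{1-q}(\sHigCorRed Y)$ be the image of $\theta$. By \cref{lem:summarylemma} the homomorphism $\Phi$ factors as $\Psi\circ(\id\tensmax\pi)$ through $\pi\colon\sHigComRed Y\to\sHigCorRed Y$, so $\Psi_*(x\boxtimes[\theta])=\Phi_*(w)$. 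Write $\partial_{\mathrm c}$, $\partial_1$, $\partial_\infty$ for the connecting maps of the three sequences figuring in \cref{defn_coarseslant,defn_structuregroupslant}. Then the down-then-right composite is $x/[\theta]=(-1)^p\partial_{\mathrm c}\Phi_*(w)$ and the right-then-down composite is $(\partial x)/\theta=-\partial_\infty^{-1}\partial_1(\bar{\Upsilon}_{\LSym,0})_*(\partial x\boxtimes\theta)$. Since the asserted equality reads $(\partial x)/\theta=(-1)^p\,(x/[\theta])$, the explicit $(-1)^p$ precisely cancels the $(-1)^p$ built into the coarse slant product, and the claim becomes equivalent to the unsigned identity $\partial_{\mathrm c}\Phi_*(w)=-\partial_\infty^{-1}\partial_1(\bar{\Upsilon}_{\LSym,0})_*(\partial x\boxtimes\theta)$.

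First I would dispose of the source side. Tensoring the Higson--Roe sequence of $X\times Y$ with $\sHigComRed Y$ over $\tensmax$ (which preserves exactness) produces a short exact sequence whose connecting map $\partial_\otimes$ satisfies $\partial x\boxtimes\theta=\partial_\otimes(w)$, by compatibility of the external product with boundary maps; here the boundary acts on the first tensor factor, so no sign is introduced. Next I would observe that $(\bar{\Upsilon}_{\LSym,0},\bar{\Upsilon}_{\LSym},\Phi)$ is a morphism from this tensored sequence to the short exact sequence
\begin{equation*}
0\to\FiproLocz(\tilde\rho_X)/\Cz((1,\infty),\Roe(\tilde\rho_X))\to\FiproLoc(\tilde\rho_X)/\Cz([1,\infty),\Roe(\tilde\rho_X))\xrightarrow{\ev_1}\Fipro(\tilde\rho_X)/\Roe(\tilde\rho_X)\to0,
\end{equation*}
the only nontrivial compatibility being $\ev_1\circ\bar{\Upsilon}_{\LSym}=\Phi\circ(\ev_1\tens\id)$, which is immediate from the defining formulas $S\otimes f\mapsto[\tau(S)\tilde\rho_Y(f)]$ and $(T_t)\otimes f\mapsto[(\tau(T_t)\tilde\rho_Y(f))_t]$. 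Writing $\partial_\square$ for the connecting map of this target sequence, naturality of the connecting homomorphism gives $(\bar{\Upsilon}_{\LSym,0})_*\partial_\otimes=\partial_\square\Phi_*$. Substituting, the unsigned identity collapses to the purely target-side statement $\partial_{\mathrm c}=-\partial_\infty^{-1}\partial_1\partial_\square$ between maps $\K_{p+1-q}(\Fipro(\tilde\rho_X)/\Roe(\tilde\rho_X))\to\K_{p-q}(\Roe(\tilde\rho_X))$.

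To prove this, I would assemble the commutative $3\times 3$ diagram of short exact sequences of $\tilde\rho_X$-algebras whose rows are the defining sequence of $\partial_\infty$, the sequence $0\to\FiproLocz\to\FiproLoc\xrightarrow{\ev_1}\Fipro\to0$, and the target sequence of $\partial_\square$, and whose columns are the defining sequence of $\partial_1$, the sequence $0\to\Cz([1,\infty),\Roe)\to\FiproLoc\to\FiproLoc/\Cz([1,\infty),\Roe)\to0$, and the coarse sequence of $\partial_{\mathrm c}$ (all arguments being $\tilde\rho_X$). Exactness of every row and column follows from the ideal inclusions underlying the sequences of \cref{defn_structuregroupslant} together with surjectivity of $\ev_1$ on the relevant algebras. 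The classical anticommutativity of the two corner-to-corner composites of connecting maps then yields $\partial_1\partial_\square=-\partial_\infty\partial_{\mathrm c}$, hence $\partial_\infty^{-1}\partial_1\partial_\square=-\partial_{\mathrm c}$, which is exactly the required identity. Tracing back, the sign $-1$ from this anticommutativity cancels the $-1$ built into the structure-group slant product, so that the right-then-down composite reduces to $\partial_{\mathrm c}\Phi_*(w)$, matching the down-then-right composite.

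The main obstacle is the sign, which I would isolate entirely in the single clean fact that the two iterated connecting maps emanating from the corner of the $3\times 3$ diagram differ by $-1$; this is precisely what makes rigorous the heuristic of the footnote to \cref{defn_structuregroupslant} that the boundary at the endpoint $1$ and the inverted boundary at the endpoint $\infty$ are negatives of one another. The remaining points require care but no new ideas: confirming that the external product introduces no sign when the boundary acts on the first factor, checking exactness of the two \enquote{full} rows and columns of the diagram (which rests on the inclusions $\Cz([1,\infty),\Roe(\tilde\rho_X))\subseteq\FiproLoc(\tilde\rho_X)$ and $\Cz((1,\infty),\Roe(\tilde\rho_X))\subseteq\FiproLocz(\tilde\rho_X)$ recorded in the setup), and the final bookkeeping verifying that all signs collapse to leave precisely the factor $(-1)^p$.
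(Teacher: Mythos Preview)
Your proposal is correct and follows essentially the same route as the paper's proof: both arguments rest on (i) the morphism of short exact sequences given by $(\bar{\Upsilon}_{\LSym,0},\bar{\Upsilon}_{\LSym},\Phi)$ from the tensored Higson--Roe sequence to the quotient row of the $\FiproLoc$-algebras, and (ii) the $3\times 3$ grid of exact sequences in the $\tilde\rho_X$-algebras, whose corner anticommutativity produces the single $-1$ that, together with the built-in signs of the two slant products, yields the asserted $(-1)^p$. The only difference is organizational---the paper packages (i) and (ii) into one large $\K$-theory diagram and reads off the signs square by square, whereas you chase elements and isolate the identity $\partial_{\mathrm c}=-\partial_\infty^{-1}\partial_1\partial_\square$ explicitly.
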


\begin{proof}
Consider the commutative diagrams of \textCstar-algebras
\[\xymatrix{
&0\ar[d]&0\ar[d]&0\ar[d]&
\\0\ar[r]&\Cz((1,\infty),\Roe(\tilde\rho_X))\ar[d]\ar[r]&\Cz([1,\infty),\Roe(\tilde\rho_X))\ar[d]\ar[r]&\Roe(\tilde\rho_X)\ar[d]\ar[r]&0
\\0\ar[r]&\FiproLocz(\tilde\rho_X)\ar[d]\ar[r]&\FiproLoc(\tilde\rho_X)\ar[d]\ar[r]&\Fipro(\tilde\rho_X)\ar[d]\ar[r]&0
\\0\ar[r]&\frac{\FiproLocz(\tilde\rho_X)}{\Cz((1,\infty),\Roe(\tilde\rho_X))}\ar[d]\ar[r]&\frac{\FiproLoc(\tilde\rho_X)}{\Cz([1,\infty),\Roe(\tilde\rho_X))}\ar[d]\ar[r]&\frac{\Fipro(\tilde\rho_X)}{\Roe(\tilde\rho_X)}\ar[d]\ar[r]&0
\\&0&0&0&
}\]
with exact rows and columns and
\[\xymatrix@C=2ex{
0\ar[r]&\Locz(\rho_{X\times Y})\otimes_{\max}\sHigComRed Y\ar[d]^{\bar{\Upsilon}_{\LSym,0}}\ar[r]&\Loc(\rho_{X\times Y})\otimes_{\max}\sHigComRed Y\ar[d]^{\bar{\Upsilon}_{\LSym}}\ar[r]&\Roe(\rho_{X\times Y})\otimes_{\max}\sHigComRed Y\ar[d]^{\Phi}\ar[r]&0
\\0\ar[r]&\frac{\FiproLocz(\tilde\rho_X)}{\Cz((1,\infty),\Roe(\tilde\rho_X))}\ar[r]&\frac{\FiproLoc(\tilde\rho_X)}{\Cz([1,\infty),\Roe(\tilde\rho_X))}\ar[r]&\frac{\Fipro(\tilde\rho_X)}{\Roe(\tilde\rho_X)}\ar[r]&0
}\]
with exact rows. 
Together with the external product, they induce on $\K$-theory the diagram 
\[\xymatrix@C=8ex{
\K_p(\Roe(\rho_{X\times Y}))\otimes\K_{1-q}(\sHigComRed Y)\ar[d]^{\boxtimes}\ar[r]^-{\partial\otimes\id}
&\K_{p-1}(\Locz(\rho_{X\times Y}))\otimes\K_{1-q}(\sHigComRed Y)\ar[d]^{\boxtimes}
\\\K_{p+1-q}(\Roe(\rho_{X\times Y})\otimes_{\max}\sHigComRed Y)\ar[d]^-{\Phi_*}\ar[r]^-{\partial}
&\K_{p-q}(\Locz(\rho_{X\times Y})\otimes_{\max}\sHigComRed Y)\ar[d]^{(\bar{\Upsilon}_{\LSym,0})_*}
\\\K_{p+1-q}\left(\frac{\Fipro(\tilde\rho_X)}{\Roe(\tilde\rho_X)}\right)\ar[r]^-{\partial}\ar[d]^-{\partial}
&\K_{p-q}\left(\frac{\FiproLocz(\tilde\rho_X)}{\Cz((1,\infty),\Roe(\tilde\rho_X))}\right)\ar[d]^-{\partial}
\\\K_{p-q}(\Roe(\tilde\rho_X))\ar[r]^-{\partial}_-{\cong}&\K_{p-q-1}(\Cz((1,\infty),\Roe(\tilde\rho_X)))
}\]
whose upper two squares commute and whose lower square commutes up to the sign $-1$.
The arrows on the left hand side compose to $(-1)^p$ times
\[\K_p(\Roe(\rho_{X\times Y}))\otimes\K_{1-q}(\sHigComRed Y)\to \K_p(\Roe(\rho_{X\times Y}))\otimes\K_{1-q}(\sHigCorRed Y)\xrightarrow{/}\K_{p-q}(\Roe(\tilde\rho_X))\]
and the arrows on the right and bottom compose to the negative of the slant product 
\[\K_{p-1}(\Locz(\rho_{X\times Y}))\otimes\K_{1-q}(\sHigComRed Y)\xrightarrow{/} \K_{p-q}(\Roe(\tilde\rho_X))\,.\]
The claim follows.
\end{proof}

\begin{lem}\label{lem:structuregroupslantcompatiblewithhomologycohomologyslant}
The diagram
\[\xymatrix{
\K_p(X\times Y)\otimes\K^q(Y)\ar[d]^{/}&\Strg_p(X\times Y)\otimes\K^q(Y)\ar[l]\ar[r]&\Strg_p(X\times Y)\otimes\K_{-q}(\sHigComRed Y)\ar[d]^{/}
\\\K_{p-q}(X)\ar[rr]^-{\Ind}&&\K_{p-q}(\Roe(X))
}\]
commutes. 
\end{lem}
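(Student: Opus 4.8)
The plan is to reduce the claimed commutativity to a single sign identity between connecting maps of localization algebras over $[1,\infty)$, and to read that sign off a direct-sum splitting.

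First I would trace both composites down to the level of \Cstar-algebras. Take $x \otimes \theta$ with $x \in \K_p(\Locz(\rho_{X\times Y})) = \Strg_p(X\times Y)$ and $\theta \in \K_{-q}(\Cz(Y,\Kom)) = \K^q(Y)$. Since $\Cz(Y,\Kom)$ is nuclear, the tensor product of \cref{defn_alternativeKhomslant} agrees with the maximal one of \cref{defn_structuregroupslant}, so both external products stem from the single class $x \boxtimes \theta \in \K_{p-q}(\Locz(\rho_{X\times Y}) \tensmax \Cz(Y,\Kom))$; naturality of $\boxtimes$ identifies the left arrow of the diagram with $\iota \colon \Locz(\rho_{X\times Y}) \hookrightarrow \Loc(\rho_{X\times Y})$ and the right arrow with $j \colon \Cz(Y,\Kom) \hookrightarrow \sHigComRed Y$. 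The decisive input is the lemma preceding \cref{defn_alternativeKhomslant}: for $f \in \Cz(Y,\Kom)$ the function $t \mapsto \tau(T_t)\tilde\rho_Y(f)$ lies in $\Loc(\tilde\rho_X)$, and in $\Locz(\tilde\rho_X)$ once $T \in \Locz(\rho_{X\times Y})$, while the commutator $[\tau(T_t),\tilde\rho_Y(f)]$ then lies in $\mathcal S := \Cz((1,\infty),\Roe(\tilde\rho_X))$. Hence the restrictions of $\Upsilon_{\LSym}$ and of $\bar\Upsilon_{\LSym,0}$ to $\Locz(\rho_{X\times Y}) \otimes \Cz(Y,\Kom)$ factor through one and the same $*$-homomorphism $\Upsilon^0$ into $\Locz(\tilde\rho_X)/\mathcal S$, followed respectively by $\phi \colon \Locz(\tilde\rho_X)/\mathcal S \to \Loc(\tilde\rho_X)/\mathcal C$ (with $\mathcal C := \Cz([1,\infty),\Roe(\tilde\rho_X))$) and by $\Locz(\tilde\rho_X)/\mathcal S \hookrightarrow \FiproLocz(\tilde\rho_X)/\mathcal S$. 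Writing $\bar w := \Upsilon^0_*(x \boxtimes \theta) \in \K_{p-q}(\Locz(\tilde\rho_X)/\mathcal S)$, both composites are thus governed by $\bar w$.

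Next I would set up the comparison. A cut-off approximation by finite-propagation functions shows $\mathcal S \subseteq \mathcal C$ are genuine ideals of $\Locz(\tilde\rho_X)$ and $\Loc(\tilde\rho_X)$, with $\mathcal C$ contractible. Naturality of the boundary map identifies $\partial_A$, on classes coming from $\Locz(\tilde\rho_X)$, with the boundary $\partial_P$ of $0 \to \mathcal S \to \Locz(\tilde\rho_X) \xrightarrow{\ev_\infty} \Locz(\tilde\rho_X)/\mathcal S \to 0$, and $\partial_B$ is the boundary of the cone sequence $0 \to \mathcal S \to \mathcal C \xrightarrow{\ev_1} \Roe(\tilde\rho_X) \to 0$. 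Consequently the right-hand composite of the diagram equals $-\,\partial_B^{-1}\partial_P(\bar w)$, the sign being the one built into \cref{defn_structuregroupslant}, while the left-bottom composite, namely $\Ind$ after the $\K$-homology slant, equals $(\ev_1)_*(\ev_\infty)^{-1}_*\phi_*(\bar w)$. It remains to compare these inside the extension $0 \to \mathcal S \to \Loc(\tilde\rho_X) \xrightarrow{\psi} \Loc(\tilde\rho_X)/\mathcal S \to 0$, whose boundary I call $\partial_Q$.

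The crux, which makes the footnote to \cref{defn_structuregroupslant} precise, is that the two complementary ideals $\Locz(\tilde\rho_X)/\mathcal S$ and $\mathcal C/\mathcal S$ of $\Loc(\tilde\rho_X)/\mathcal S$ have trivial intersection and together span, giving a splitting $\Loc(\tilde\rho_X)/\mathcal S \cong \big(\Locz(\tilde\rho_X)/\mathcal S\big) \oplus \big(\mathcal C/\mathcal S\big)$ as \Cstar-algebras, in which the first summand is identified by $\ev_\infty$ with $\Loc(\tilde\rho_X)/\mathcal C$ and the second by $\ev_1$ with $\Roe(\tilde\rho_X)$. Under this splitting the two components of $\psi_*$ are the isomorphism $(\ev_\infty)_*$, whose inverse is the section $s := (\ev_\infty)^{-1}_*\phi_*$ with $\psi^{(1)}_* s = \id$, and the map $(\ev_1)_*$, while naturality gives $\partial_Q = \partial_P \oplus \partial_B$ on the two summands. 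Plugging $v = s(\bar w)$ into the exactness relation $\partial_Q \circ \psi_* = 0$ yields $\partial_P(\bar w) + \partial_B\big((\ev_1)_* s(\bar w)\big) = 0$, that is $\partial_B^{-1}\partial_P(\bar w) = -(\ev_1)_*(\ev_\infty)^{-1}_*\phi_*(\bar w)$. The sign in \cref{defn_structuregroupslant} therefore cancels and the two composites coincide.

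The main obstacle is exactly this sign bookkeeping: one must verify the direct-sum decomposition of $\Loc(\tilde\rho_X)/\mathcal S$ together with the two identifications of its summands, and check that $\partial_Q$ restricts to $\partial_P$ and to $\partial_B$, since it is the single relation $\partial_Q \circ \psi_* = 0$ that forces the two summand contributions to be negatives of one another. The supporting facts — that $\mathcal S$ and $\mathcal C$ are honest ideals of the localization algebras, that $\Locz(\tilde\rho_X)/\mathcal S \cong \Loc(\tilde\rho_X)/\mathcal C$ and $\mathcal C/\mathcal S \cong \Roe(\tilde\rho_X)$ via the two evaluations, and the naturality of all connecting maps — are routine but must be recorded; this is the same mechanism responsible for the sign in \cref{lem:structuregroupslantcompatiblewithRoestableHigsoncoronaslant}.
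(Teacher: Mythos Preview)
Your proof is correct and follows essentially the same approach as the paper. Both arguments reduce to the pentagon relating the two slant products to the class $\bar w \in \K_{p-q}(\Locz(\tilde\rho_X)/\mathcal S)$, and both extract the crucial sign from the vanishing $\partial_Q \circ \psi_* = 0$ combined with the decomposition $\Loc(\tilde\rho_X)/\mathcal S \cong (\Locz(\tilde\rho_X)/\mathcal S) \oplus (\mathcal C/\mathcal S)$; the paper phrases this decomposition as the identity $\pi_* = (\iota_\infty)_*(\ev_\infty)_* + (\iota_1)_*(\ev_1)_*$ of maps with commuting images, while you phrase it as an honest direct sum of ideals, but these are equivalent observations leading to the same computation.
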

\begin{proof}
We will deduce the claim from the diagram in \Cref{fig:proofstructuregroupslantcompatiblewithhomologycohomologyslant},
\begin{sidewaysfigure}
\[\xymatrix@C=10ex@R=10ex{
\K_p(\Loc(\rho_{X\times Y}))\otimes\K_{-q}(\Cz(Y))\ar[d]^-{\boxtimes}
&\K_p(\Locz(\rho_{X\times Y}))\otimes\K_{-q}(\Cz(Y))\ar[d]^-{\boxtimes}\ar[l]\ar[r]
&\K_p(\Locz(\rho_{X\times Y}))\otimes\K_{-q}(\sHigComRed Y)\ar[d]^-{\boxtimes}
\\\K_{p-q}(\Loc(\rho_{X\times Y})\otimes\Cz(Y))\ar[d]^-{(\Upsilon_{\LSym})_*}
&\K_{p-q}(\Locz(\rho_{X\times Y})\otimes\Cz(Y))\ar[d]^-{(\Upsilon_{\LSym,0})_*}\ar[l]\ar[r]
&\K_{p-q}(\Locz(\rho_{X\times Y})\otimes_{\max}\sHigComRed Y)\ar[d]^-{(\bar\Upsilon_{\LSym,0})_*}
\\\K_{p-q}\left(\frac{\Loc(\tilde\rho_X)}{\Cz([1,\infty),\Roe(\tilde\rho_X))}\right)\ar[dd]^{\cong}_{(\ev_\infty)_*^{-1}}
&\K_{p-q}\left(\frac{\Locz(\tilde\rho_X)}{\Cz((1,\infty),\Roe(\tilde\rho_X))}\right)\ar[l]^\cong\ar[r]\ar[dr]^{\partial}
&\K_{p-q}\left(\frac{\FiproLocz(\tilde\rho_X)}{\Cz((1,\infty),\Roe(\tilde\rho_X))}\right)\ar[d]^{\partial}
\\
&
&\K_{p-q-1}(\Cz((1,\infty),\Roe(\tilde\rho_X)))\ar[d]^{\cong}_{\partial^{-1}}
\\\K_{p-q}(\Loc(\tilde\rho_X))\ar[rr]^{\Ind=(\ev_1)_*}
&
&\K_{p-q}(\Roe(\tilde\rho_X))
}\]
\caption{Proving Lemma \ref{lem:structuregroupslantcompatiblewithhomologycohomologyslant}.}
\label{fig:proofstructuregroupslantcompatiblewithhomologycohomologyslant}
\end{sidewaysfigure}
in which $\Upsilon_{\LSym,0}$ denotes the obvious simultaneous restriction of both $\Upsilon_{\LSym}$ and $\bar\Upsilon_{\LSym,0}$.
The left and right vertical compositions are the two slant products up to the signs $+1$ and $-1$, respectively.
Commutativity of the whole diagram except of the lower pentagon is clear and we just need to show that the latter commutes up to a sign $-1$.

Consider the diagram of \textCstar-algebras
\[\xymatrix{
0\ar[r]&\Cz((1,\infty),\Roe(\tilde\rho_X))\ar[r]\ar@{=}[d]
&\Locz(\tilde\rho_X)\ar[r]\ar[d]%
&\frac{\Locz(\tilde\rho_X)}{\Cz((1,\infty),\Roe(\tilde\rho_X))}\ar[d]^{\iota_\infty}\ar[r]%
&0
\\0\ar[r]&\Cz((1,\infty),\Roe(\tilde\rho_X))\ar[r]\ar@{=}[d]
&\Loc(\tilde\rho_X)\ar[r]^{\pi}\ar@{-->}[ur]_{\ev_\infty}\ar@{-->}[dr]^{\ev_1}
&\frac{\Loc(\tilde\rho_X)}{\Cz((1,\infty),\Roe(\tilde\rho_X))}\ar[r]&0
\\0\ar[r]&\Cz((1,\infty),\Roe(\tilde\rho_X))\ar[r]
&\Cz([1,\infty),\Roe(\tilde\rho_X))\ar[r]\ar[u]
&\Roe(\tilde\rho_X)\ar[u]_{\iota_1}\ar[r]&0
}\]
whose rows are exact and whose solid arrows commute.
Here, $\ev_\infty$ has been identified with the composition $\Loc(\tilde\rho_X)\xrightarrow{\ev_\infty} \frac{\Loc(\tilde\rho_X)}{\Cz([1,\infty),\Roe(\tilde\rho_X))}\cong\frac{\Locz(\tilde\rho_X)}{\Cz((1,\infty),\Roe(\tilde\rho_X))}$.
The dashed arrows do not commute with the rest, but instead $\iota_\infty\circ\ev_\infty$ and $\iota_1\circ\ev_1$ are two $*$-homomorphisms with commuting images and whose sum is the quotient $*$-homomorphism $\pi$.

Thus, we obtain on $\K$-theory the diagram
\[\xymatrix{
&\K_{p-q}(\frac{\Locz(\tilde\rho_X)}{\Cz((1,\infty),\Roe(\tilde\rho_X))})\ar[d]^{(\iota_\infty)_*}\ar[dr]^-{\partial}%
&
\\\K_{p-q}(\Loc(\tilde\rho_X))\ar[r]^-{\pi_*}\ar@{-->}[ur]^-{(\ev_\infty)_*}_-{\cong}\ar@{-->}[dr]_-{\Ind=(\ev_1)_*}
&\K_{p-q}(\frac{\Loc(\tilde\rho_X)}{\Cz((1,\infty),\Roe(\tilde\rho_X))})\ar[r]^-{\partial}
&\K_{p-q-1}(\Cz((1,\infty),\Roe(\tilde\rho_X)))
\\&\K_{p-q}(\Roe(\tilde\rho_X))\ar[u]_{(\iota_1)_*}\ar[ur]^-{\partial}_-{\cong}
}\]
whose right part commutes and such that $\pi_*=(\iota_\infty)_*\circ(\ev_\infty)_*+(\iota_1)_*\circ(\ev_1)_*$ as well as $\partial\circ\pi_*=0$. Therefore, 
\begin{align*}
0&=\partial\circ\pi_*
=\partial\circ((\iota_\infty)_*\circ(\ev_\infty)_*+(\iota_1)_*\circ(\ev_1)_*)
=\partial\circ(\ev_\infty)_*+\partial\circ(\ev_1)_*
\end{align*}
and the claim follows.
\end{proof}

Furthermore, we have the following theorem describing the compatibility between our new slant product and the external products introduced in \cite[Section 3.1]{EngelWulffZeidler}.
\begin{lem}\label{lem:structuregroupslantscompatiblewithcrossproducts}
Let $X,Y,Z$ be proper metric spaces, $Z$ of continuously bounded geometry, and $m,p,q\in\Z$.
Then the following diagram commutes:
\[\xymatrix{
\K_m(X)\otimes\Strg_p(Y\times Z)\otimes\K_{-q}(\sHigComRed Z)\ar[r]^-{\id\otimes /}\ar[d]^{\times\otimes\id}&\K_m(X)\otimes\K_{p-q}(\Roe Y)\ar[d]^{\times\circ(\Ind\otimes \id)}
\\\Strg_{m+p}(X\times Y\times Z)\otimes \K_{-q}(\sHigComRed Z)\ar[r]^-{/}&\K_{m+p-q}(\Roe(X\times Y))
}\]
\end{lem}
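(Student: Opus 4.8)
The plan is to realize both ways around the square as the maps induced on K-theory by a single commuting diagram of \Cstar-algebra homomorphisms and boundary maps, in the spirit of the naturality statement \cref{lem_LocSlantNatural} and the external-product compatibilities in \cite[Section~4]{EngelWulffZeidler}. The starting point is a bookkeeping observation about the ample modules. Spelling out \cref{defn_structuregroupslant} for the pair $(X\times Y,Z)$ uses the module $\tilde\rho_{X\times Y}=\rho_{X\times Y}\otimes\id_{H_Z\otimes\ell^2}$ on $\tilde H_{X\times Y}=H_X\otimes H_Y\otimes H_Z\otimes\ell^2$, whereas the slant product for the pair $(Y,Z)$ uses $\tilde\rho_Y=\rho_Y\otimes\id_{H_Z\otimes\ell^2}$ on $\tilde H_Y=H_Y\otimes H_Z\otimes\ell^2$. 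Under the canonical identification $\tilde H_{X\times Y}\cong H_X\otimes\tilde H_Y$ one has $\tilde\rho_{X\times Y}=\rho_X\otimes\tilde\rho_Y$, and, writing $\tau'$, $\tilde\rho_Z'$ for the auxiliary representations of the $(X\times Y,Z)$-slant product and $\tau$, $\tilde\rho_Z$ for those of the $(Y,Z)$-slant product, the factorizations $\tau'(S\otimes L)=S\otimes\tau(L)$ and $\tilde\rho_Z'(f)=\id_{H_X}\otimes\tilde\rho_Z(f)$ hold for bounded operators $S$ on $H_X$, $L$ on $H_Y\otimes H_Z$, and $f\in\sHigComRed Z$. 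Recalling that the external product on structure groups from \cite[Section~3.1]{EngelWulffZeidler} is induced by the exterior tensor homomorphism $\Loc(\rho_X)\otimes\Locz(\rho_{Y\times Z})\to\Locz(\rho_{X\times Y\times Z})$, $S\otimes L\mapsto(t\mapsto S_t\otimes L_t)$, these factorizations show that exterior multiplication by $\Loc(\rho_X)$ carries the algebras of the $(Y,Z)$-slant product into those of the $(X\times Y,Z)$-slant product; re-examining the statements collected in \cref{lem:summarylemma} in the presence of the extra tensor factor $H_X$ then shows that $\bar{\Upsilon}_{\LSym,0}$ intertwines these exterior products.

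Concretely, I would establish that the square
\[\xymatrix@C=4ex@R=6ex{
\Loc(\rho_X)\otimes\bigl(\Locz(\rho_{Y\times Z})\tensmax\sHigComRed Z\bigr)\ar[r]\ar[d]_-{\id\otimes\bar{\Upsilon}_{\LSym,0}}
&\Locz(\rho_{X\times Y\times Z})\tensmax\sHigComRed Z\ar[d]^-{\bar{\Upsilon}_{\LSym,0}}
\\\Loc(\rho_X)\otimes\frac{\FiproLocz(\tilde\rho_Y)}{\Cz((1,\infty),\Roe(\tilde\rho_Y))}\ar[r]
&\frac{\FiproLocz(\tilde\rho_{X\times Y})}{\Cz((1,\infty),\Roe(\tilde\rho_{X\times Y}))}
}\]
commutes, the horizontal arrows being the exterior products just described. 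This is immediate from the identity $\bar{\Upsilon}_{\LSym,0}\bigl((S\otimes L)\otimes f\bigr)=\bigl[(S_t\otimes(\tau(L_t)\circ\tilde\rho_Z(f)))_t\bigr]$ together with the factorizations above, so that this square, while the technical heart of the argument, is purely formal once the module identifications are in place.

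With the square available, I would stack it on top of the remaining steps of \cref{defn_structuregroupslant}. Exterior multiplication by the $X$-factor commutes with the external product $\boxtimes$ by associativity of the underlying tensor products, and it commutes with the first boundary map $\partial$ of \cref{defn_structuregroupslant}, attached to the extension of $\FiproLocz(\tilde\rho_{X\times Y})$ by $\Cz((1,\infty),\Roe(\tilde\rho_{X\times Y}))$, by naturality of the connecting homomorphism. The genuinely new phenomenon, and the step I expect to be the main obstacle, is the emergence of the index map $\Ind$ at the final boundary isomorphism $\partial^{-1}$: the external product on structure groups is diagonal in the localization parameter $t$, so the $X$-direction enters as an element $S\in\Loc(\rho_X)$ sharing the variable $t$ with the $(Y,Z)$-machinery, and $\partial^{-1}$, the inverse of the connecting map of the contractible extension
\[0\to\Cz((1,\infty),\Roe(\tilde\rho_{X\times Y}))\to\Cz([1,\infty),\Roe(\tilde\rho_{X\times Y}))\to\Roe(\tilde\rho_{X\times Y})\to0,\]
evaluates this shared parameter at the endpoint $t=1$. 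Making this precise via the multiplicativity of the connecting homomorphism with respect to the external product identifies exterior multiplication by $[S]\in\K_m(\Loc X)$ before the slant product with exterior multiplication by $\Ind[S]=[S_1]\in\K_m(\Roe X)$ afterwards, which is exactly the map $\times\circ(\Ind\otimes\id)$ on the right-hand side of the diagram.

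Finally I would confirm that no net sign arises: both composites involve exactly one slant product, contributing the same sign from \cref{defn_structuregroupslant}, and the Koszul signs produced by reassociating the external products and by the multiplicativity of the boundary maps agree on the two sides. Combining the commuting $\bar{\Upsilon}_{\LSym,0}$-square with these functoriality and naturality statements then yields commutativity of the diagram in the statement.
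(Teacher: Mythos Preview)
Your proposal is correct and follows essentially the same approach as the paper: the paper's proof consists precisely of the commutative square you write down (with $\tensmax$ in place of your $\otimes$ on the $\Loc(\rho_X)$ factor) together with the remark that the dependence of the right vertical map on all $T_t$ disappears after applying $\partial^{-1}\circ\partial$, which is where $\Ind=(\ev_1)_*$ enters. Your account is considerably more detailed than the paper's terse sketch, but the strategy and the key ingredients are identical.
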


\begin{proof}
The proof is very similar to the proof of \cite[Theorem 4.20]{EngelWulffZeidler}. The essential observation is that using suitable representations yields a commutative diagram:
\[\xymatrix@C=10ex{
\Loc(X)\tensmax\Locz(Y\times Z)\tensmax\sHigComRed Z
\ar[r]^-{\id\otimes\bar\Upsilon_{\LSym,0}}\ar[d]
&\Loc(X)\tensmax\frac{\FiproLocz(Y)}{\Cz((1,\infty),\Roe(Y))}\ar[d]
\\\Locz(X\times Y\times Z)\tensmax\sHigComRed Z\ar[r]^-{\bar\Upsilon_{\LSym,0}}
&\frac{\FiproLocz(X\times Y)}{\Cz((1,\infty),\Roe(X\times Y))}
}\]
Here, the right vertical map applied to $(T_t)_{t\in[1,\infty)}\otimes\blank$ depends on all $T_t$ and not just $T_1$, but this dependence will disappear once we apply the last two maps $\partial^{-1}\circ\partial$ in the definition of our slant product, when the ideal $\Cz((1,\infty),\Roe(X\times Y))$ is divided out in the second short exact sequence. This is where the map $\Ind=(\ev_1)$ comes into play.
\end{proof}

\section{The relative case}

The slant products from the previous section have relative versions as well. 
We briefly recall the definitions of relative Roe and localization algebras as well as relative stable Higson coronas and compactifications.
Their most basic properties can be proven exactly as in the absolute case and hence we just refer the reader to the standard references, e.\,g.\ \cite{higson_roe,WillettYuHigherIndexTheory}.
See also \cite[Sections 4.3,4.4,6.3,6.4]{wulff2020equivariant}, where this has already been discussed even in the equivariant case and with coefficient-\textCstar-algebras.

Let $A\subset X$ and $B\subset Y$ be closed subspaces.
First of all, we define the relative reduced stable Higson compactification and corona of $(Y,B)$ as the kernels of the restriction to $B$, that is,
\begin{align*}
\sHigComRed(Y,B)&\coloneqq\ker(\sHigComRed(Y)\to\sHigComRed(B))
\\\sHigCorRed(Y,B)&\coloneqq\ker(\sHigCorRed(Y)\to\sHigCorRed(B))=\sHigComRed(Y,B)/\Cz(Y\setminus B,\Kom)\,.
\end{align*}
Note that on $\K$-theory we get the following diagram with exact rows and columns where all squares commute except of the ones bounded only by boundary maps, which commute up to the sign $-1$:
\begin{equation}\label{eq:RelativeDualHigsonRoeDiagram}
\xymatrix{
&\vdots\ar[d]&\vdots\ar[d]&\vdots\ar[d]&
\\\dots\ar[r]&\K^{-*}(Y,B)\ar[r]\ar[d]&\K^{-*}(Y)\ar[r]\ar[d]&\K^{-*}(B)\ar[r]\ar[d]&\dots
\\\dots\ar[r]&\K_*(\sHigComRed(Y,B))\ar[r]\ar[d]&\K_*(\sHigComRed(Y))\ar[r]\ar[d]&\K_*(\sHigComRed(B))\ar[r]\ar[d]&\dots
\\\dots\ar[r]&\K_*(\sHigCorRed(Y,B))\ar[r]\ar[d]&\K_*(\sHigCorRed(Y))\ar[r]\ar[d]&\K_*(\sHigCorRed(B))\ar[r]\ar[d]&\dots
\\&\vdots&\vdots&\vdots&
}\end{equation}

Dually one defines the relative Roe and localization algebras as follows. There are ideals $\Roe(A\subset X)\subset\Roe(X)$ and $\Fipro(A\subset X)\subset\Fipro(X)$ which are defined as the norm closure of all those operators in the respective \textCstar-algebra whose support is contained in some $R$-neighborhood of $A\times A$. Clearly, $\Roe(A\subset X)=\Roe(X)\cap\Fipro(A\subset X)$ is also an ideal in $\Fipro(A\subset X)$ and we define the relative Roe algebras as the quotients
\[\Roe(X,A)\coloneqq \Roe(X)/\Roe(A\subset X)\quad\text{and}\quad \Fipro(X,A)\coloneqq \Fipro(X)/\Fipro(A\subset X)\,,\]
where $\Roe(X,A)\cong \frac{\Roe(X)+\Fipro(A\subset X)}{\Fipro(A\subset X)}$ embeds canonically as an ideal into $\Fipro(X,A)$.

Similarly, the ideals $\Loc(A\subset X)\subset\Loc(X)$, $\Locz(A\subset X)\subset\Locz(X)$, $\FiproLoc(A\subset X)\subset\FiproLoc(X)$ and $\FiproLocz(A\subset X)\subset\FiproLocz(X)$ are defined as the norm closure of all those families of operators $(T_t)_{t\in[1,\infty)}$ in the respective \textCstar-algebras for which there is a function $\varepsilon\colon [1,\infty)\to(0,\infty)$ with $\varepsilon(t)\xrightarrow{t\to\infty}0$ such that $T_t$ is supported  in an $\varepsilon(t)$-neighborhood of $A\times A$ for all $t\geq 1$. We define the four relative localization algebras
\begin{align*}
\Loc(X,A)&\coloneqq \Loc(X)/\Loc(A\subset X)
&\Locz(X,A)&\coloneqq \Locz(X)/\Locz(A\subset X)
\\\FiproLoc(X,A)&\coloneqq \FiproLoc(X)/\FiproLoc(A\subset X)
&\FiproLocz(X,A)&\coloneqq \FiproLocz(X)/\FiproLocz(A\subset X)\,.
\end{align*}

Here, we already omitted the ample $X$-module $(H_X,\rho_X)$ from the notation to keep the formulas legible. The justification of this is, again, that the $\K$-theory groups $\K_*(\Loc(X,A))\cong \K_*(X,A)$, $\Strg_*(X,A)\coloneqq\K_*(\Locz(X,A))$ and $\K_*(\Roe(X,A))$ do not depend on its particular choice up to canonical isomorphism.
These are the relative $\K$-homology, structure group and $\K$-theory of the Roe algebra for which we will consider slant products. They are covariantly functorial under continuous, continuous coarse and coarse maps, respectively, between pairs of proper metric spaces $(X,A)\to (X',A')$, and this functoriality is again implemented by adjoining with suitable (families of) covering isometries.

It is furthermore well-known that there are canonical natural isomorphisms $\Strg_*(A)\cong \K_*(\Locz(A\subset X))$ and $\K_*(\Roe(A))\cong \K_*(\Roe(A\subset X))$ which are induced by adjoining with a family of isometries or isometry, respectively, which cover the inclusion $\iota\colon A\to X$. 
Therefore, under these isomorphisms the induced maps $\iota_*\colon\Strg_*(A)\to\Strg_*(X)$ and $\K_*(\Roe(A))\to \K_*(\Roe(X))$ correspond exactly to the homomorphisms induced by the inclusions $\Locz(A\subset X)\subset\Locz(X)$ and $\Roe(A\subset X)\subset\Roe(X)$.

We immediately obtain the following diagram with exact rows and columns where all squares commute except of the ones bounded only by boundary maps, which commute up to the sign $-1$:
\begin{equation}\label{eq:RelativeHigsonRoeDiagram}
\xymatrix{
&\vdots\ar[d]&\vdots\ar[d]&\vdots\ar[d]&
\\\dots\ar[r]&\Strg_*(A)\ar[r]\ar[d]&\Strg_*(X)\ar[r]\ar[d]&\Strg_*(X,A)\ar[r]\ar[d]&\dots
\\\dots\ar[r]&\K_*(A)\ar[r]\ar[d]&\K_*(X)\ar[r]\ar[d]&\K_*(X,A)\ar[r]\ar[d]&\dots
\\\dots\ar[r]&\K_*(\Roe A)\ar[r]\ar[d]&\K_*(\Roe X)\ar[r]\ar[d]&\K_*(\Roe(X,A))\ar[r]\ar[d]&\dots
\\&\vdots&\vdots&\vdots&
}\end{equation}
Its last column is the relative version of the Higson--Roe sequence and it is dual to the first column of Diagram \eqref{eq:RelativeDualHigsonRoeDiagram}.

If $C\subset X$ is another closed subspaces, then we always have 
\begin{equation}\label{eq:Roesum}
\Roe(A\subset X)+\Roe(C\subset X)=\Roe(A\cup C\subset X)
\end{equation}
but the equation $\Roe(A\subset X)\cap\Roe(C\subset X)=\Roe(A\cap C\subset X)$ holds only under the so-called coarse excisiveness condition, i.\,e.\ if for all $R>0$ there is $S>0$ such that $B_R(A)\cap B_R(C)\subset B_S(A\cap C)$.
For our purposes it is important that the latter condition is always satisfied for the two subsets $X\times B,A\times Y$ of $X\times Y$, so we always have
\begin{equation}\label{eq:Roeproductintersection}
\Roe(X\times B\subset X\times Y)\cap\Roe(A\times Y\subset X\times Y)=\Roe(A\times B\subset X\times Y)\,.
\end{equation}
Similarly, the canonical analogues of \eqref{eq:Roesum} and \eqref{eq:Roeproductintersection} for the localization algebras also always hold true.

Now we can construct relative versions of the $*$-homomorphisms $\Phi,\Psi,\Upsilon_{\LSym},\bar{\Upsilon}_{\LSym}$ and $\bar{\Upsilon}_{\LSym,0}$.
The following Lemma tells us what these maps do with the ideals associated to the subspaces.
\begin{lem}\label{lem:PhiPsiUpsilonIdeals}
\begin{enumerate}
\item The $*$-homomorphism 
\[\Psi\colon\Roe(X \times Y) \tensmax \sHigCorRed Y \to  \Fipro(X)/\Roe(X)\,,\quad S\otimes [f]\mapsto [\tau(S)\circ\tilde\rho_Y(f)]\]
maps the ideal $\Roe(A\times Y\subset X\times Y)\tensmax\sHigCorRed Y$ of the domain into the ideal $(\Fipro(A\subset X)+\Roe(X))/\Roe(X)\cong\Fipro(A\subset X)/\Roe(A\subset X)$ of the target and it maps the ideal $\Roe(X\times B\subset X\times Y)\otimes\sHigCorRed(Y,B)$ to zero.

Therefore, it also maps $\Roe(X\times B\cup A\times Y\subset X\times Y)\tensmax \sHigCorRed(Y,B)$ into $(\Fipro(A\subset X)+\Roe(X))/\Roe(X)$. %

\item Similarily, 
the $*$-homomorphism
\begin{align*}
\bar{\Upsilon}_{\LSym} \colon \Loc(X \times Y) \tensmax \sHigComRed Y & \to \FiproLoc(X) / \Cz([1,\infty),\Roe(X)),\\
(T_t)_{t \in [1,\infty)} \otimes f & \mapsto \left[ (\tau(T_t) \circ \tilde\rho_Y(f))_{t \in [1,\infty)} \right].
\end{align*}
maps the ideal $\Loc(A\times Y\subset X\times Y)\tensmax \sHigComRed(Y)$ into the ideal 
\[\frac{\FiproLoc(A\subset X)+\Cz([1,\infty),\Roe(X))}{\Cz([1,\infty),\Roe(X))}\cong\frac{\FiproLoc(A\subset X)}{\Cz([1,\infty),\Roe(A\subset X)}\]
and the ideal $\Loc(X\times B\subset X\times Y)\tensmax \sHigComRed(Y,B)$ to zero.

Therefore, it also maps $\Loc(X\times B\cup A\times Y\subset X\times Y)\tensmax \sHigComRed(Y,B)$
into
\((\FiproLoc(A\subset X)+\Cz([1,\infty),\Roe(X)))/\Cz([1,\infty),\Roe(X))\).

\end{enumerate}
\end{lem}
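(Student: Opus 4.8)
The two parts run in parallel, so the plan is to isolate the two mechanisms at work and apply each twice. Write $p\colon (X\times Y)^2\to X^2$, $((x_1,y_1),(x_2,y_2))\mapsto(x_1,x_2)$, for the projection, and recall that $\tilde\rho_Y(f)=\id_{H_X}\otimes\bar\rho_Y(f)$ commutes with every $\tilde\rho_X(g)$ and therefore has zero $X$-propagation.

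For the statements about $A\times Y$ I would argue purely with supports. From $\tilde\rho_X(g)\tau(S)\tilde\rho_X(h)=\bigl((\rho_X(g)\otimes\id_{H_Y})S(\rho_X(h)\otimes\id_{H_Y})\bigr)\otimes\id_{\ell^2}$ for $g,h\in\Cz(X)$ one reads off that the $X$-support of $\tau(S)$ is contained in $p(\supp S)$; in particular, if $S$ is supported in the $R$-neighbourhood of $(A\times Y)^2$, then $\tau(S)$ is supported in the $R$-neighbourhood of $A\times A$. Since composing with $\tilde\rho_Y(f)$ cannot enlarge the $X$-support, $\tau(S)\tilde\rho_Y(f)$ lies in $\Fipro(A\subset X)$, and passing to norm limits (legitimate as $\Fipro(A\subset X)$ is closed and $\tau$ is a $*$-homomorphism) shows that $\Psi$ lands in the image of $\Fipro(A\subset X)$ in $\Fipro(X)/\Roe(X)$, which is $(\Fipro(A\subset X)+\Roe(X))/\Roe(X)\cong\Fipro(A\subset X)/\Roe(A\subset X)$ by the second isomorphism theorem and the relation $\Roe(A\subset X)=\Roe(X)\cap\Fipro(A\subset X)$. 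The same computation for families $(T_t)$ with support radii $\varepsilon(t)\to0$ settles the corresponding inclusion in (ii), the point being that the $X$-propagation of $\tau(T_t)\tilde\rho_Y(f)$ is dominated by that of $T_t$ and hence still tends to $0$.

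For the statements about $X\times B$ the first thing I would record is that every $f\in\sHigComRed(Y,B)$, i.e.\ with $f|_B=0$, is automatically $\Kom$-valued: for any fixed $b\in B$ we have $f(y)=f(y)-f(b)\in\Kom$. The plan is then to approximate and reduce to the case of $\Cz(Y,\Kom)$, which is already known to be killed. Fix an exhaustion $\chi_n\nearrow 1$ of $Y$ by compactly supported $\chi_n\in\Cz(Y)$ with $0\le\chi_n\le1$; then $\chi_n f\in\Cz(Y,\Kom)$, so $\Phi(S\otimes\chi_n f)=0$ in $\Fipro(X)/\Roe(X)$ by \Cref{lem:summarylemma}(iv). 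It remains to estimate the remainder $\tau(S)\tilde\rho_Y((1-\chi_n)f)$. Taking $S$ supported in the $R$-neighbourhood of $(X\times B)^2$ confines the relevant $Y$-values to a fixed $B_{R'}(B)$, and there vanishing variation together with $f|_B=0$ gives control: for $y\in B_{R'}(B)$ and $b\in B$ with $d(y,b)\le R'$ one has $\|f(y)\|=\|f(y)-f(b)\|\le\Var_{R'}f(b)\to0$ as $b\to\infty$, so for each $\varepsilon>0$ there is a compact $K\subset Y$ with $\|f\|<\varepsilon$ on $B_{R'}(B)\setminus K$. Choosing $n$ with $\chi_n=1$ on $K$ bounds the remainder by $\varepsilon\|S\|$, and closedness of $\Roe(X)$ yields $\tau(S)\tilde\rho_Y(f)\in\Roe(X)$, i.e.\ $\Psi(S\otimes[f])=0$.

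The main obstacle is the corresponding claim in (ii), where the target ideal is $\Cz([1,\infty),\Roe(X))$ and one must produce a family that \emph{vanishes as} $t\to\infty$. The same splitting applies and the $(1-\chi_n)f$-remainder is controlled uniformly in $t$ exactly as above (using that the support radii $\varepsilon(t)$ stay within a fixed bound $R_0$), but now I must also check that $(\tau(T_t)\tilde\rho_Y(\chi_n f))_t$—which lies in $\Loc(\tilde\rho_X)$ by the localization-algebra statement recalled before \Cref{defn_alternativeKhomslant}—vanishes at infinity. This is precisely where $f|_B=0$ and $(T_t)\in\Loc(X\times B\subset X\times Y)$ must be combined: as $t\to\infty$ the radius $\varepsilon(t)\to0$ confines the $Y$-values seen by $T_t$ to $B_{\varepsilon(t)}(B)$, while $\chi_n f$ is compactly supported, uniformly continuous and vanishes on $B$, so $\sup_{B_{\varepsilon(t)}(B)}\|\chi_n f\|\to0$ and hence $\|\tau(T_t)\tilde\rho_Y(\chi_n f)\|\to0$. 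Thus the family lies in $\Cz([1,\infty),\Roe(X))$ and $\bar\Upsilon_{\LSym}$ kills $\Loc(X\times B\subset X\times Y)\tensmax\sHigComRed(Y,B)$. Finally, the two ``therefore'' assertions follow by writing $\Roe(X\times B\cup A\times Y\subset X\times Y)=\Roe(X\times B\subset X\times Y)+\Roe(A\times Y\subset X\times Y)$ from \eqref{eq:Roesum} (and its localization analogue), splitting an operator accordingly, and applying the two cases just established to the two summands.
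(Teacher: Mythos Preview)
Your argument is correct and follows the same overall strategy as the paper: support-tracking for the $A\times Y$ ideals and an approximation via vanishing variation plus $f|_B=0$ for the $X\times B$ ideals. The one noteworthy difference is in the $X\times B$ case of part~(i): the paper exploits the corona quotient more directly by choosing a representative $f\in\sHigComRed(Y,B)$ that vanishes on the entire $R$-neighbourhood of $B$ (possible precisely because $\chi_R f\in\Cz(Y\setminus B,\Kom)$, as you observed), so that $\tau(S)\tilde\rho_Y(f)=0$ on the nose; your approximation by $\chi_n f\in\Cz(Y,\Kom)$ and a small remainder achieves the same end but with more work. For part~(ii) the paper likewise uses a single approximation of $f$ by a $g$ vanishing on a small neighbourhood of $B$ (using uniform continuity, which follows from continuity plus vanishing variation), rather than your two-step splitting into compact and tail parts; both routes are valid, and your version has the mild advantage of making explicit that each $\tau(T_t)\tilde\rho_Y(\chi_n f)$ lies in $\Roe(\tilde\rho_X)$ via the $\Upsilon_{\LSym}$ lemma.
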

\begin{proof}
If $S\in\Roe(X\times Y)$ is supported in an $R$-neighborhood of $A\times Y$ and $f\in\sHigComRed Y$, then $\tau(S)\circ\tilde\rho_Y(f)$ is clearly supported in an $R$-neighborhood of $A$. Furthermore, if $S$ is supported in an $R$-neighborhood of $X\times B$ and $[f]\in\sHigCorRed(Y,B)$, then the representative $f$ can be chosen such that it is zero on the $R$-neighborhood of $B$ and hence $\tau(S)\circ\tilde\rho_Y(f)=0$. This proves the first claim.

Now concerning the second claim, for $(T_t)_{t \in [1,\infty)}\in \Loc(A\times Y\subset X\times Y)$ and any $f\in\sHigComRed Y$ it is again clear that $(\tau(T_t) \circ \tilde\rho_Y(f))_{t \in [1,\infty)}\in\FiproLoc(A\subset X)$.
Next, let $(T_t)_{t \in [1,\infty)}\in \Loc(X\times B\subset X\times Y)$ one of the families of operators generating this \textCstar-algebra and $f\in\sHigComRed(Y,B)$. As $f$ is continuous and has vanishing variation, it is in particular uniformly continuous, so for each $\delta>0$ we can approximate $f$ by a function $g\in\sHigComRed(Y,B)$ which is zero on some $R$-neighborhood of $B$ and with $\|f-g\|<\delta$. Then for $t$ large enough the operator $T_t$ is supported within the $R$-neighborhood of $X\times B$ and therefore $\tau(T_t) \circ \tilde\rho_Y(g)=0$. This shows that $(\tau(T_t) \circ \tilde\rho_Y(f))_{t \in [1,\infty)}\in \Cz([1,\infty),\Roe(X))$ and the second claim follows.
\end{proof}

The lemma implies that we can divide out ideals from the $*$-homomorphisms $\Phi,\Psi,\Upsilon_{\LSym},\bar{\Upsilon}_{\LSym}$ and $\bar{\Upsilon}_{\LSym,0}$ to obtain their relative versions: We have
\begin{align*}
\Psi\colon\Roe((X,A)\times(Y,B))\tensmax \sHigCorRed (Y,B) &\to \frac{\Fipro(X)/\Roe(X)}{(\Fipro(A\subset X)+\Roe(X))/\Roe(X)}
\\&\cong \frac{\Fipro(X)/\Fipro(A\subset X)}{(\Fipro(A\subset X)+\Roe(X))/\Fipro(A\subset X)}
\\&= \Fipro(X,A)/\Roe(X,A)
\end{align*}
and similarily
\begin{alignat*}{2}
\Phi\colon\Roe((X,A)\times(Y,B))&\tensmax \sHigComRed(Y,B) &&\to \Fipro(X,A)/\Roe(X,A)
\\\bar{\Upsilon}_{\LSym} \colon \Loc((X,A)\times(Y,B)) &\tensmax \sHigComRed (Y,B) && \to \FiproLoc(X,A) / \Cz([1,\infty),\Roe(X,A))
\\\Upsilon_{\LSym} \colon \Loc((X,A)\times(Y,B)) &\otimes \Cz(Y\setminus B, \Kom) && \to \Loc(X,A) / \Cz([1,\infty),\Roe(X,A))
\\\bar{\Upsilon}_{\LSym,0} \colon \Locz((X,A)\times(Y,B)) &\tensmax \sHigComRed (Y,B) &&\to \FiproLocz(X,A) / \Cz((1,\infty),\Roe(X,A))\,.
\end{alignat*}

\begin{defn}\label{defn:relativeslantproducts}
We define the relative slant products between
the $\K$-theory of the Roe algebra and the $\K$-theory of the reduced stable Higson corona  as $(-1)^p$ times the composition
\begin{align*}
\K_p(\Roe((X,A)\times(Y,B))) &\otimes \K_{1-q}(\sHigCorRed (Y,B)) 
\\& \xrightarrow{\boxtimes} \ \K_{p+1-q}(\Roe((X,A)\times(Y,B))) \otimes_{\max} \sHigCorRed(Y,B))
\\& \xrightarrow{\Psi_*} \ \K_{p+1-q}(\Fipro(X,A)/\Roe(X,A))
\\& \xrightarrow{\partial} \ \K_{p-q}(\Roe (X,A))\,,
\end{align*}
between $\K$-homology and $\K$-theory as the composition
\begin{align*}
\K_p((X,A)\times(Y,B)) \otimes \K^{q}(Y,B)
&\cong \K_p(\Loc((X,A)\times(Y,B))) \otimes \K_{-q}(\Cz(Y\setminus B, \Kom))
\\&\xrightarrow{\boxtimes} \K_{p-q}(\Loc((X,A)\times(Y,B)) \otimes \Cz(Y\setminus B, \Kom))
\\&\xrightarrow{({\Upsilon_{\LSym}})_\ast} \K_{p-q}( \Loc(X,A) / \Cz([1, \infty), \Roe(X,A)))
\\&\xrightarrow[\cong]{(\ev_\infty)^{-1}} \K_{p-q}(\Loc(X,A)) \cong \K_{p-q}(X,A)
\end{align*}
and between the structure group and the $\K$-theory of the stable Higson compactification as the negative of the composition
\begin{align*}
\Strg_p((X,A)\times(Y,B))&\otimes\K_{-q}(\sHigComRed (Y,B))\cong
\\&\cong \K_p(\Locz((X,A)\times(Y,B))\otimes\K_{-q}(\sHigComRed (Y,B))
\\& \xrightarrow{\boxtimes} \K_{p-q}(\Locz((X,A)\times(Y,B)) \otimes_{\max} \sHigComRed (Y,B))
\\& \xrightarrow{(\bar{\Upsilon}_{\LSym,0})_*} \K_{p-q}(\FiproLocz(X,A) / \Cz((1,\infty),\Roe(X,A)))
\\& \xrightarrow{\partial} \K_{p-q-1}( \Cz((1,\infty),\Roe(X,A)))
\\&\xrightarrow[\cong]{\partial^{-1}}\K_{p-q}(\Roe(X,A))
\end{align*}
\end{defn}

The proofs of the results of the previous section and  \cite[Section 4]{EngelWulffZeidler} can directly be generalized to the relative case. In particular we have:
\begin{itemize}
\item The above slant product between relative $\K$-homology and relative $\K$-theory is exactly the same as the one obtained from the composition product in $E$-theory (cf.\ \cite[Lemma 4.19]{EngelWulffZeidler}).

\item The three relative slant products are independent of the choice of ample modules and they are natural under coarse, proper continuous or continuous coarse, respectively, maps between pairs of spaces (cf.\ \Cref{lem_LocSlantNatural} and \cite[Section 4.5]{EngelWulffZeidler}),

\item The slant products are compatible with each other under the maps of the relative Higson--Roe sequence and its dual (first column of \eqref{eq:RelativeHigsonRoeDiagram} and last column of \eqref{eq:RelativeDualHigsonRoeDiagram}, respectively), i.e.\ relative versions of \Cref{lem:structuregroupslantcompatiblewithRoestableHigsoncoronaslant,lem:structuregroupslantcompatiblewithhomologycohomologyslant} hold true.
\end{itemize}
We will give a more precise list of these properties in the next section, when we consider the even more general equivariant case.

A new aspect of relative slant products is that we can consider their compatibility with boundary maps in the long exact sequences associated to pairs of spaces.

\begin{lem}\label{lem:structuregroupslantcompatiblewithpairLESboundaryI}
The diagram
\[\xymatrix{
{\begin{array}{c}
\Strg_p((X,A)\times (Y,B))
\\\otimes\K_{-q}(\sHigComRed B)
\end{array}}
\ar[r]^{\id\otimes\partial}\ar[d]^{\partial\otimes\id}
&{\begin{array}{c}
\Strg_p((X,A)\times (Y,B))
\\\otimes\K_{-q-1}(\sHigComRed(Y,B))
\end{array}}\ar[dd]^{/}
\\{\begin{array}{c}
\Strg_{p-1}(X\times B\cup A\times Y,A\times Y)
\\\otimes\K_{-q}(\sHigComRed B)
\end{array}}\ar[d]_{\mathrm{exc}^{-1}}^{\cong}
&
\\{\begin{array}{c}
\Strg_{p-1}((X,A)\times B)
\\\otimes\K_{-q}(\sHigComRed B)
\end{array}}
\ar[r]^{/}
&\K_{p-q-1}(\Roe(X,A))
}\]
commutes up to a sign $(-1)^{p-1}$. 
\end{lem}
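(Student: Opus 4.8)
The plan is to follow the bookkeeping strategy of the proofs of \Cref{lem:structuregroupslantcompatiblewithRoestableHigsoncoronaslant,lem:structuregroupslantcompatiblewithhomologycohomologyslant}: express both composites in the square as a single external product followed by $\bar{\Upsilon}_{\LSym,0}$ and a chain of connecting maps of $C^*$-extensions, and then read off commutativity and the sign from those extensions. Write $W = X\times Y$, $P = A\times Y$ and $Q = X\times B\cup A\times Y$, so that $\Strg_p((X,A)\times(Y,B)) = \K_p(\Locz(W,Q))$, the left-hand boundary map $\partial\otimes\id$ is the connecting map of the triple $P\subset Q\subset W$ in the first tensor slot, and the top boundary map $\id\otimes\partial$ is the connecting map of $0\to\sHigComRed(Y,B)\to\sHigComRed(Y)\to\sHigComRed(B)\to0$ in the second tensor slot. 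The first step is to commute both of these connecting maps past the external product $\boxtimes$. In the first slot (the structure group, of degree $p$) this is sign-free, while in the second slot the graded Leibniz rule for external products and connecting maps contributes the Koszul sign $(-1)^p$; thus the top-right composite acquires a factor $(-1)^p$ relative to the tensored connecting map $\partial_{H,\otimes}$ of the Higson sequence, whereas the left-bottom composite only sees the tensored connecting map $\partial_{S,\otimes}$ of the structure triple.

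Next I would show that $\bar{\Upsilon}_{\LSym,0}$ intertwines these tensored connecting maps with the corresponding connecting maps on the target side. The essential input is \Cref{lem:PhiPsiUpsilonIdeals}: $\bar{\Upsilon}_{\LSym,0}$ sends the ideal attached to $A\times Y$ into the relative target ideal built from $\FiproLoc(A\subset X)$ and annihilates the ideal attached to $X\times B$ once it is paired with $\sHigComRed(Y,B)$; together with the localization analogues of \eqref{eq:Roesum} and \eqref{eq:Roeproductintersection} (coarse excisiveness of $X\times B$ and $A\times Y$) this guarantees that the relevant ideals intersect as expected and that $\bar{\Upsilon}_{\LSym,0}$ descends to morphisms between the two tensored short exact sequences and the corresponding short exact sequences of relative $\FiproLocz(X,A)/\Cz((1,\infty),\Roe(X,A))$-type algebras. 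By naturality of connecting maps this lets me replace $\partial_{H,\otimes}$ and $\partial_{S,\otimes}$ by connecting maps on the target without introducing extra signs. On the $C^*$-level the excision isomorphism $\mathrm{exc}^{-1}$ is the honest identification of $\Locz(X\times B\cup A\times Y, A\times Y)$ with $\Locz(X\times B, A\times B) = \Locz((X,A)\times B)$, so it too contributes no sign.

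After these reductions both composites are equal, up to the factors recorded above, to the same external product fed into $\bar{\Upsilon}_{\LSym,0}$ and then into a composite of two transverse connecting maps on the relative Roe side, together with the internal maps $\partial$ and $\partial^{-1}$ of the slant product (which are common to both composites, as is the overall sign $-1$ in the definition of the slant product, so that all of these cancel in the comparison). The two transverse connecting maps fit into a $3\times3$ diagram of $C^*$-algebras with exact rows and columns, entirely analogous to the one in the proof of \Cref{lem:structuregroupslantcompatiblewithRoestableHigsoncoronaslant} and reflected in the sign $-1$ of the boundary squares in \eqref{eq:RelativeHigsonRoeDiagram} and \eqref{eq:RelativeDualHigsonRoeDiagram}; the square they bound anticommutes, contributing a sign $-1$. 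Combining this with the Koszul sign $(-1)^p$ from the first step yields the total sign $(-1)^{p}\cdot(-1) = (-1)^{p-1}$ asserted in the statement.

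I expect the main obstacle to be twofold: constructing the $C^*$-algebraic diagrams so that they genuinely commute with exact rows and columns — which requires checking that $\bar{\Upsilon}_{\LSym,0}$ respects the nested ideals attached to $A\times Y$ and $X\times B$ on the nose (here \Cref{lem:PhiPsiUpsilonIdeals} and coarse excisiveness \eqref{eq:Roeproductintersection} are indispensable) and that the excision identification is compatible with the relative $\bar{\Upsilon}_{\LSym,0}$ over $B$ — and the careful bookkeeping of the several connecting maps and Koszul signs. Once the diagram is in place, commutativity up to $(-1)^{p-1}$ is a formal consequence of the graded Leibniz rule and the anticommutativity of transverse connecting maps, exactly as in the earlier compatibility lemmas.
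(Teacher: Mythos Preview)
Your overall bookkeeping and the identification of the Koszul sign $(-1)^p$ are correct, and invoking \Cref{lem:PhiPsiUpsilonIdeals} to control the ideals is the right input. The gap is in your third step. After commuting the two boundaries past $\boxtimes$ you obtain two \emph{different} maps $\K_{p-q}(C/I\tensmax D/J)\to\K_{p-q-1}(E)$, where $C=\Locz(X\times Y)$, $I=\Locz(X\times B\cup A\times Y\subset X\times Y)$, $D=\sHigComRed Y$, $J=\sHigComRed(Y,B)$ and $E=\FiproLocz(X,A)/\Cz((1,\infty),\Roe(X,A))$: one is $(\bar\Upsilon_{\LSym,0})_*\circ\partial_{D/J}$ through $C/I\tensmax J$, the other passes through $I\tensmax D/J$ and the excision identification. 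There is no $3\times3$ diagram ``on the relative Roe side'' relating them---the target is the single algebra $E$---and the anticommutativity of transverse boundaries in the tensor $3\times3$ on the domain side lands one degree too low, in $\K_{p-q-2}(I\tensmax J)$, so it cannot compare the two degree-$(p-q-1)$ maps you need.

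What actually supplies the missing $-1$ is a vanishing argument rather than an anticommuting square. The absolute $\bar\Upsilon_{\LSym,0}$ carries the extension
\[0\to\frac{C\tensmax J+I\tensmax D}{I\tensmax J}\to\frac{C\tensmax D}{I\tensmax J}\to C/I\tensmax D/J\to 0\]
to the degenerate extension $0\to E\to E\to 0\to 0$, so the induced composite $(\Upsilon')_*\circ\partial$ into $\K_{*-1}(E)$ is zero. The kernel splits canonically as $(C/I\tensmax J)\oplus(I\tensmax D/J)$, along which $\partial$ decomposes as $i_*\partial_{D/J}+j_*\partial_{C/I}$; hence $(\Upsilon'\circ i)_*\partial_{D/J}=-(\Upsilon'\circ j)_*\partial_{C/I}$. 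After identifying $\Upsilon'\circ i$ and $\Upsilon'\circ j$ (via the excision inclusion) with the two relevant relative versions of $\bar\Upsilon_{\LSym,0}$, this is precisely the relation between your two composites. Combined with the Koszul $(-1)^p$ this yields $(-1)^{p-1}$.
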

\begin{proof}

Let us introduce the abbreviations
\begin{align*}
C&\coloneqq \Locz(X\times Y)&I&\coloneqq\Locz(X\times B\cup A\times Y\subset X\times Y)\subset C
\\D&\coloneqq\sHigComRed Y&J&\coloneqq \sHigComRed(Y,B)\subset D
\end{align*}
such that 
\[C/I=\Locz((X,A)\times (Y,B))\qquad D/J=\sHigComRed(B)\]
and furthermore we define
\[E\coloneqq\FiproLocz(X,A) / \Cz((1,\infty),\Roe(X,A))\,.\]
\Cref{lem:PhiPsiUpsilonIdeals} implies that there is a map of short exact sequences
\[\xymatrix{
0\ar[r]&\frac{C\tensmax J+I\tensmax D}{I\tensmax J}\ar[r]\ar[d]^{\Upsilon'}&\frac{C\tensmax D}{I\tensmax J}\ar[r]\ar[d]&C/I\tensmax D/J\ar[r]\ar[d]&0
\\0\ar[r]&E\ar[r]&\ar[r]E&0\ar[r]&0
}\]
whose vertical maps are subquotients of the absolute version of $\bar\Upsilon_{\LSym,0}$ and we see that the composition 
\[\K_{*+1}\left(C/I\tensmax D/J\right)\xrightarrow{\partial}\K_{*}\left(\frac{C\tensmax J+I\tensmax D}{I\tensmax J}\right)\xrightarrow{(\Upsilon')_*} \K_{*}(E)\]
factors through $\K_*(0)=0$ and hence vanishes.
Note that there is a canonical direct sum decomposition 
\[\frac{C\tensmax J+I\tensmax D}{I\tensmax J}\cong (C/I\tensmax J)\oplus (I\tensmax D/J)\]
and let us denote by $i,j$ the inclusions of the two summands and by $r,s$ the projections onto the two summands.
The commutative diagram
\[\xymatrix{
0\ar[r]&C/I\tensmax J\ar[r]&C/I\tensmax D\ar[r]&C/I\tensmax D/J\ar[r]&0
\\0\ar[r]&\frac{C\tensmax J+I\tensmax D}{I\tensmax J}\ar[r]\ar[d]^{s}\ar[u]_{r}&\frac{C\tensmax D}{I\tensmax J}\ar[r]\ar[d]\ar[u]&C/I\tensmax D/J\ar[r]\ar@{=}[d]\ar@{=}[u]&0
\\0\ar[r]&I\tensmax D/J\ar[r]&C\tensmax D/J\ar[r]&C/I\tensmax D/J\ar[r]&0
}\]
shows that the three boundary maps associated to these short exact sequences, lets call them $\partial_{D/J},\partial,\partial_{C/I}$ to distinguish them, are related by $r_*\partial=\partial_{D/J}$ and $s_*\partial=\partial_{C/I}$. As $i_*+j_*\colon\K_*(C/I\tensmax J)\oplus \K_*(I\tensmax D/J)\to \K_*(\frac{C\tensmax J+I\tensmax D}{I\tensmax J})$ is the inverse to $(r_*,s_*)$, we have 
\[0=(\Upsilon')_*\partial=(\Upsilon')_*(i_*r_*+j_*s_*)\partial=(\Upsilon'\circ i)_*\partial_{D/J}+(\Upsilon'\circ j)_*\partial_{C/I}\,.\]

Note that
\begin{align*}
\Upsilon'\circ i\colon C/I\tensmax J&=\Locz((X,A)\times (Y,B))\tensmax\sHigComRed(Y,B)
\\&\to E=\FiproLocz(X,A) / \Cz((1,\infty),\Roe(X,A))
\end{align*}
is exactly the $*$-homomorphism $\bar\Upsilon_{\LSym,0}$ for the pair of spaces $(X,A)\times (Y,B)$.
In order to take care of the other summand, we observe that \Cref{lem:PhiPsiUpsilonIdeals} also implies that
\[\Upsilon'\circ j\colon I\tensmax D/J=\Locz(X\times B\cup A\times Y\subset X\times Y)\tensmax \sHigComRed(B)\to E\]
factors as $\Upsilon'\circ j=\Upsilon''\circ(t\otimes\id)$, where $t\colon I\to I/K$ is the quotient $*$-homomorphism dividing out the ideal $K\coloneqq \Locz(A\times Y\subset X\times Y)$.
Now, we may assume that the ample $X$-module $(H_X,\rho_X)$ contains an $\rho_X$-invariant subspace $H_A$ such that $(H_A,\rho_A\coloneqq\rho_X|_{\Cz(A)})$ is an ample $A$-module and analogously $(H_Y,\rho_Y)$ contains an ample $B$-module $(H_B,\rho_B\coloneqq\rho_Y|_{\Cz(B)})$. If we construct $\Locz(X\times B\cup A\times Y)$ as a subalgebra of $\Lin(H_A\otimes H_B\oplus H_A^\perp\otimes H_B\oplus H_A\otimes H_B^\perp)$ and $\Locz(X\times B)$ using the representation $\rho_X\otimes\rho_B$ as a subalgebra of $\Lin(H_X\otimes H_B)$, we obtain inclusions $\Locz((X,A)\times B)\subset \Locz(X\times B\cup A\times Y, A\times Y)\subset I/K$ which induce isomorphisms on $\K$-theory, the first of which is the excision isomorphism. It is clear that the restriction of $\Upsilon''\colon I/K\tensmax D/J\to E$ to $\Locz((X,A)\times B)\otimes \sHigComRed B$ is exactly the relative version of $\bar\Upsilon_{\LSym,0}$ for the pair of spaces $(X,A)\times B$.

Altogether we obtain the diagram
\[\xymatrix@C=3ex{
{\begin{array}{c}
\Strg_p((X,A)\times (Y,B))
\\\otimes\K_{-q}(\sHigComRed B)
\end{array}}
\ar[rr]^{\id\otimes\partial}\ar[dr]^-{\boxtimes}\ar[dd]^{\partial\otimes\id}
&&{\begin{array}{c}
\Strg_p((X,A)\times (Y,B))
\\\otimes\K_{-q-1}(\sHigComRed(Y,B))
\end{array}}\ar[d]^{\boxtimes}
\\&\K_{p-q}(C/I\tensmax D/J)\ar[r]^{\partial_{D/J}}\ar[d]^{t\circ\partial_{C/J}}
&\K_{p-q-1}(C/I\tensmax J)
\ar[d]_{(\bar\Upsilon_{\LSym,0})_*}^{=(\Upsilon'\circ i)_*}
\\{\begin{array}{c}
\Strg_{p-1}(X\times B\cup A\times Y,\\ A\times Y)
\\\otimes\K_{-q}(\sHigComRed B)
\end{array}}
\ar[r]^{\boxtimes}
&\K_{p-q-1}(I/K\tensmax D/J)\ar[r]^-{(\Upsilon'')_*}
&\K_{p-q-1}(E)
\\{\begin{array}{c}
\Strg_{p-1}((X,A)\times B)
\\\otimes\K_{-q}(\sHigComRed B)
\end{array}}
\ar[u]_-{\cong}\ar[r]^-{\boxtimes}
&{\begin{array}{c}
\K_{p-q-1}(\Locz((X,A)\times B)\\\tensmax \sHigComRed B)
\end{array}}
\ar[u]\ar[ur]_{(\bar\Upsilon_{\LSym,0})_*}
&
}\]
whose top-right quadrilateral commutes up to a sign $(-1)^p$, whose middle-right square commutes up to the sign $-1$ and whose other parts commute. The claim follows.
\end{proof}

\begin{lem}\label{lem:structuregroupslantcompatiblewithpairLESboundaryII}
The diagram
\[\xymatrix{
{\begin{array}{c}
\Strg_p((X,A)\times (Y,B))
\\\otimes\K_{-q}(\sHigComRed (Y,B))
\end{array}}
\ar[r]^{/}\ar[d]^{\partial\otimes\id}
&\K_{p-q}(\Roe(X,A))
\ar[dd]^{\partial}
\\{\begin{array}{c}
\Strg_{p-1}(X\times B\cup A\times Y,X\times B)
\\\otimes\K_{-q}(\sHigComRed (Y,B))
\end{array}}\ar[d]_{\mathrm{exc}^{-1}}^{\cong}
&
\\{\begin{array}{c}
\Strg_{p-1}(A\times (Y,B))
\\\otimes\K_{-q}(\sHigComRed (Y,B))
\end{array}}
\ar[r]^{/}
&\K_{p-q-1}(\Roe(A))
}\]
commutes.
\end{lem}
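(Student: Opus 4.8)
The plan is to mirror the proof of \Cref{lem:structuregroupslantcompatiblewithpairLESboundaryI}, but with the boundary now taken in the $X$-direction, so that it is realized on the Roe-algebra target rather than on the dual stable Higson side. I abbreviate
\begin{align*}
C&\coloneqq\Locz(X\times Y), & I&\coloneqq\Locz(A\times Y\cup X\times B\subset X\times Y)\subset C,\\
K&\coloneqq\Locz(X\times B\subset X\times Y)\subset I, & J&\coloneqq\sHigComRed(Y,B),
\end{align*}
so that $C/I=\Locz((X,A)\times(Y,B))$ computes the source and $I/K$ computes $\Strg_{p-1}(X\times B\cup A\times Y,X\times B)$. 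Via the external product, the structure-group boundary $\partial\otimes\id$ is the connecting map of $0\to I/K\tensmax J\to C/K\tensmax J\to C/I\tensmax J\to 0$, while the right-hand vertical $\partial$ is the connecting map of $0\to\Roe(A\subset X)\to\Roe(X)\to\Roe(X,A)\to 0$ under $\K_\ast(\Roe(A\subset X))\cong\K_\ast(\Roe(A))$.

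First I would turn $\bar{\Upsilon}_{\LSym,0}$ into a morphism of short exact sequences using the $\Locz$-version of \Cref{lem:PhiPsiUpsilonIdeals}: since $K\tensmax J$ is sent to $0$ and $\Locz(A\times Y\subset X\times Y)\tensmax J$ into the $A$-part, $\bar{\Upsilon}_{\LSym,0}$ descends to a vertical morphism from the source sequence above to the right column
\[0\to E'_A\to E'_X\to E_{X,A}\to 0,\qquad E'_\bullet\coloneqq\FiproLocz(\bullet)\big/\Cz((1,\infty),\Roe(\bullet)),\]
of the natural $3\times 3$ diagram of $\FiproLocz$-algebras for the pair $A\subset X$. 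Naturality of the connecting homomorphism then gives $\partial_{E'}\circ(\bar{\Upsilon}_{\LSym,0})_\ast=(\bar{\Upsilon}_{\LSym,0}|_{I/K})_\ast\circ\partial^{\mathrm{top}}$, and compatibility of $\boxtimes$ with the boundary on the \emph{first} tensor factor gives $\partial^{\mathrm{top}}\circ\boxtimes=\boxtimes\circ(\partial\otimes\id)$ with no sign. Exactly as in the $\Upsilon'\circ j$ step of \Cref{lem:structuregroupslantcompatiblewithpairLESboundaryI}, I would pick an ample $X$-module with an $A$-reducing subspace and a $Y$-module with a $B$-reducing subspace to identify, under excision, the restriction $\bar{\Upsilon}_{\LSym,0}|_{I/K}$ with the map $\bar{\Upsilon}_{\LSym,0}$ for the pair $A\times(Y,B)$ landing in $E'_A\cong\FiproLocz(A)/\Cz((1,\infty),\Roe(A))$.

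It then remains to reconcile the internal boundary $\partial^{-1}\partial$ of the two slant products with the Roe pair boundary $\partial$, i.e.\ to prove $\partial\circ(\partial^{-1}\partial)_{(X,A)}=(\partial^{-1}\partial)_A\circ\partial_{E'}$. I would obtain this from two instances of the anticommutativity of connecting maps in a $3\times 3$ diagram: one with rows $0\to\Cz((1,\infty),\Roe(\bullet))\to\FiproLocz(\bullet)\to E'_\bullet\to 0$, one with the contractible rows $0\to\Cz((1,\infty),\Roe(\bullet))\to\Cz([1,\infty),\Roe(\bullet))\to\Roe(\bullet)\to 0$, both with columns the pair sequences for $A\subset X$. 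Writing $\beta_\bullet$ for the invertible connecting map of the contractible rows (so $\partial^{-1}=\beta_\bullet^{-1}$) and $\partial^{(1)}_\bullet$ for that of the first rows, these give $\partial_{\Cz}\circ\beta_{(X,A)}=-\beta_A\circ\partial$ and $\partial_{\Cz}\circ\partial^{(1)}_{(X,A)}=-\partial^{(1)}_A\circ\partial_{E'}$. Substituting $\partial=-\beta_A^{-1}\partial_{\Cz}\beta_{(X,A)}$ into the left-hand side, the two signs cancel and the identity follows \emph{without} sign. Combining this with the previous paragraph, and noting that the global sign $-1$ carried by both slant products cancels, shows that the two composites in the diagram agree.

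The main obstacle is precisely this sign bookkeeping: in contrast to \Cref{lem:structuregroupslantcompatiblewithpairLESboundaryI}, where an odd number of sign contributions survives, here the two anticommutativity signs from the stacked $3\times 3$ diagrams cancel and the $\boxtimes$-compatibility in the first variable is sign-free, so the diagram commutes on the nose. A secondary point requiring care is the excision identification of $\bar{\Upsilon}_{\LSym,0}|_{I/K}$ with the $A$-slant, together with checking that the rows stay exact after $\blank\tensmax J$; both proceed exactly as in the relative constructions preceding \Cref{defn:relativeslantproducts} and in \cite[Section~4]{EngelWulffZeidler}.
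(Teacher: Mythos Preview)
Your proposal is correct and follows essentially the same route as the paper: a map of short exact sequences induced by $\bar\Upsilon_{\LSym,0}$ from the triple sequence $0\to I/K\tensmax J\to C/K\tensmax J\to C/I\tensmax J\to 0$ to the $\FiproLocz$-pair sequence for $A\subset X$, naturality of the connecting homomorphism, the excision identification of the top arrow with the $A$-slant, and the observation that the two anticommutativity signs from the stacked boundaries cancel to give $(-1)^2=1$. The paper's proof is simply a terser rendering of the same argument; your explicit unpacking of the sign bookkeeping via two $3\times 3$ diagrams is a welcome clarification of what the paper phrases in one sentence.
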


\begin{proof}
The claim follows from naturality of the boundary maps of $\K$-theory under the following map of short exact sequences:
\[\xymatrix{
0\ar[d]&0\ar[d]
\\\frac{\Locz(X\times B\cup A\times Y\subset X\times Y)}{\Locz(X\times B\subset X\times Y)}\tensmax \sHigComRed(Y,B)
\ar[d]\ar[r]%
&\frac{\FiproLocz(A\subset X)}{\Cz((1,\infty),\Roe(A\subset X))}
\ar[d]
\\\Locz(X\times Y,X\times B)\tensmax \sHigComRed(Y,B)
\ar[d]\ar[r]^-{\bar\Upsilon_{\LSym,0}}
&\frac{\FiproLocz(X)}{\Cz((1,\infty),\Roe(X))}\ar[d]
\\\Locz((X,A)\times (Y,B))\tensmax \sHigComRed(Y,B)
\ar[d]\ar[r]^-{\bar\Upsilon_{\LSym,0}}
&\frac{\FiproLocz(X,A)}{\Cz((1,\infty),\Roe(X,A))}\ar[d]
\\0&0
}\]
Under the inclusions of \textCstar-algebras mentioned at the end of the proof of the preceding lemma, the upmost horizontal arrow restricts to the relative version of $\bar\Upsilon_{\LSym,0}$ for the pair $A\times (Y,B)$, i.\,e.\ to
\[\bar\Upsilon_{\LSym,0}\colon\Locz(A\times(Y,B))\tensmax \sHigComRed(Y,B)\to\FiproLocz(A)/\Cz((1,\infty),\Roe(A))\,.\]
The boundary map associated to the left column is compatible with the boundary map associated to the triple $(X\times Y,X\times B\cup A\times Y, X\times B)$ under exterior products and the boundary map associated to the right column ``anticommutes'' with the last two boundary maps in the definition of the slant product, yielding a total sign of $(-1)^2=1$.
\end{proof}

\section{The equivariant case}

We now assume that $G,H$ are countable discrete groups acting properly and isometrically on $X$ and $Y$, respectively, and that the subspaces $A\subset X$ and $B\subset Y$ are invariant under these group actions. On $X\times Y$ we consider the product action of $G\times H$.

On the stable Higson corona and compactification, we implement the equivariance exactly as in \cite[Section 5.1]{EngelWulffZeidler} (see also \cite[Section 4.3]{wulff2020equivariant}): We define the $H$-equivariant $\K$-theory of $(Y,B)$ as
\[\K_H^*(Y,B)\coloneqq\K_{-*}(\Cz(Y\setminus B)\rtimes H)\,.\]
See \cite[p.977]{EngelWulffZeidler} for a discussion on why this is a good choice in the absolute case and hence clearly also in the relative case.
We did not specify which crossed product functor we used, because $\Cz(Y\setminus B) \rtimes H$ is independent of it. 

Furthermore, one always has $\Cz(Y\setminus B, \Kom) \rtimes H\cong(\Cz(Y\setminus B) \rtimes H)\otimes \Kom$.
Therefore, for any exact crossed product functor $\rtimes_\mu$ the short exact sequence
\[0\to \Cz(Y\setminus B, \Kom) \rtimes_{\mu} H \to \sHigComRed (Y,B) \rtimes_{\mu} H \to \sHigCorRed (Y,B) \rtimes_{\mu} H \to 0
\]
induces a long exact sequence
\begin{align*}
\dots&\to\K_{1-*}(\sHigCorRed (Y,B) \rtimes_{\mu} H)\xrightarrow{\mu_H^*} \K^*_H(Y,B)\to
\\&\qquad\to \K_{-*}(\sHigComRed (Y,B) \rtimes_{\mu} H)\to \K_{-*}(\sHigCorRed (Y,B) \rtimes_{\mu} H)\to\dots
\end{align*}
which is natural under continuous coarse maps between pairs of spaces.
Here, the connecting homomorphism $\mu_H^*$ is one version of a relative and equivariant coarse co-assembly map.

Recall that Emerson and Meyer originally constructed a slightly different equivariant coarse co-assembly map in \cite{EM_descent}, see also \cite[Section~2.3]{EM_coass}. It can also be adapted to the relative setup, where it is a map
\[\mu^\ast_\EM\colon \Ktop_*(H,\sHigCorRed (Y,B)) \to \K_H^{1-*}(Y,B)\,.\]
The two assembly maps are related by the commutative diagram
\[\xymatrix{
\Ktop_*(H,\sHigCorRed (Y,B)) \ar[rr]^-{\mu^\ast_\EM} \ar[dr]_-{\mu_\ast^\BC} && \K_H^{1-*}(Y,B)\\
& \K_\ast(\sHigCorRed (Y,B) \rtimes_{\mu} H) \ar[ur]_-{\mu^*_H} &
}\]
where $\mu^\BC_*$ is the Baum--Connes assembly map for $H$ with coefficients in $\sHigCorRed (Y,B)$.
Emerson and Meyer's co-assembly map is known to be an isomorphism or at least surjective in many important cases, implying that $\mu_H^*$ is also surjective. This observation is important, as it shows that there are non-trivial elements in $\K_\ast(\sHigCorRed (Y,B) \rtimes_{\mu} H)$ which can be used for slanting.

On the other side, the theory of equivariant Roe and localization algebras is well known, see, for example, \cite{WillettYuHigherIndexTheory}. We just need the relative version of it, which was also considered in \cite[Sections 4.4,6.4]{wulff2020equivariant}.

Let \(X\) be endowed with a proper isometric action of a countable discrete group \(G\) and take an \(X\)-module \((H_X, \rho_X)\).
In this case, we consider a unitary representation \(u_G \colon G \to \U(H_X)\) such that \((\rho_X, u_G)\) is a covariant pair, that is, $u_G(g) \rho_X(f) u_G(g)^\ast = \rho_X(g \cdot f)$ for all $g\in G$ and $f\in \Cz(X)$.
Then we say that \((H_X, \rho_X, u_G)\) is an \(X\)-\(G\)-module.
An \(X\)-\(G\)-module is said to be \emph{locally free} if for each finite subgroup \(F \subseteq G\) and any \(F\)-invariant Borel subset \(E \subseteq X\), there is a Hilbert space \(H_E\) such that \(1_E H_X\) and \(\elltwo(F) \otimes H_E\) are isomorphic as \(F\)-Hilbert spaces, where  \(\elltwo(F)\) is endowed with the left-regular representation and \(H_E\) is endowed with the trivial representation.
We say an \(X\)-\(G\)-module is \emph{ample} if it is ample as an \(X\)-module and locally free.
Ample \(X\)-\(G\)-modules always exist~\cite[Lemma~4.5.5]{WillettYuHigherIndexTheory}.

Now fix an ample \(X\)-\(G\)-module \((H_X, \rho_X, u_G)\).
Similarly as in the non-equivariant case, we get \textCstar\nobreakdash-algebras $\Roe[G] X$, $\Loc[G] X$ and $\Locz[G] X$ by taking the closure of \emph{equivariant} locally compact operators of finite propagation, respectively of suitable families $L$ of them.
They contain the respective ideals $\Roe[G](A\subset X)\coloneqq \Roe(A\subset X)\cap\Roe[G] X$,  $\Loc[G](A\subset X)\coloneqq \Loc(A\subset X)\cap\Loc[G] X$ and $\Locz[G](A\subset X)\coloneqq \Locz(A\subset X)\cap\Locz[G] X$ and dividing them out yields the relative versions $\Roe[G](X,A)\coloneqq\Roe[G] X/\Roe[G](A\subset X)$, $\Loc[G](X,A)\coloneqq\Loc[G] X/\Loc[G](A\subset X)$ and $\Locz[G](X,A)\coloneqq\Locz[G] X/\Locz[G](A\subset X)$.

As before, their \(\K\)-theory groups are independent of the choice of ample \(X\)-\(G\)-module, compare for instance \cite[Theorems~5.2.6, 6.5.7, Proposition~6.6.2]{WillettYuHigherIndexTheory}.
We also obtain a short sequence
\[0\to\Locz[G] (X,A)\to\Loc[G] (X,A)\xrightarrow{\operatorname{ev}_1}\Roe[G] (X,A)\to 0\]
and a corresponding long exact \(\K\)-theory sequence
\[\dots\to \K_{*+1}(\Roe[G] (X,A))\to \Strg_*^G(X,A)\to \K_*^G(X,A)\xrightarrow{\Ind} \K_*(\Roe[G] (X,A))\to\dots,\]
where we use \(\K_*^G(X,A) = \K_\ast(\Loc[G] (X,A))\) as our model for the relative \(\K\)-homology group.
If $G$ acts freely on $X$, then we have
\[\K_\ast(\Loc[G] (X,A)) \cong \K_*^G(X,A) \cong \K_\ast(G \backslash X,G\backslash A),\]
compare~\cite[Section 5.5]{EngelWulffZeidler} for the non-relative case.

Now we also fix an ample \(Y\)-\(H\)-module \((H_Y, \rho_Y, u_H)\).
We obtain another covariant pair \((\rho_Y^\prime, u_H\otimes\id_{\elltwo})\), where $\rho_Y^\prime\colon \Cz(Y, \Kom) \to \Lin(H_Y \otimes \elltwo)$ is the tensor product of $\rho_Y$ and the canonical representation of $\Kom$ on $\elltwo$. 
This, in turn, extends to a covariant pair \((\bar\rho_Y, u_H \otimes \id_{\elltwo})\) for the multiplier algebra \((\Mult(\Cz(Y, \Kom), H)\), where \(\bar\rho_Y\) is precisely the same as in \eqref{eq_rep_of_Cb}.
Finally, we construct a covariant pair \((\hat\rho_Y, \hat{u}_H)\), where \(\hat\rho_Y \coloneqq \id_{\elltwo(H)} \otimes \bar\rho_Y\), \(\hat{u}_H \coloneqq \lambda_H \otimes u_H \otimes \id_{\elltwo}\) and \(\lambda_H \colon H \to \U(\elltwo(H))\) is the left-regular representation.

Viewing \(\sHigComRed Y \subset \Lin(\elltwo(H) \otimes H_Y \otimes \elltwo)\) via the representation $\hat\rho_Y$, the above implies that the \(H\)-\Cstar-algebra \(\sHigComRed Y\) is covariantly represented on \( \elltwo(H) \otimes H_Y \otimes \elltwo\).
Then Fell's absorption principle \cite[Proposition~4.1.7]{BrownOzawaCstarFiniteDim} yields an embedding of the reduced crossed product \(\hat\rho_Y \rtimes_{\red} \hat{u}_H \colon \sHigComRed Y \rtimes_\red H \hookrightarrow \cB(\elltwo(H) \otimes H_Y \otimes \elltwo)\).

Now we will redefine \(H_{X \times Y} \coloneqq H_X \otimes \elltwo(H) \otimes H_Y\) endowed with the unitary representation \(u_{G \times H}\) of \(G \times H\) given by \(g,h \mapsto u_G(g) \otimes \lambda_H(h) \otimes u_H(h) \).
The Hilbert space \(H_{X \times Y}\) also supports a representation \(\rho_{X \times Y}\) of \(\Cz(X \times Y)\) via \(f \tens f^\prime \mapsto  \rho_{X}(f) \otimes \id_{\elltwo(H)} \otimes \rho_Y(f^\prime)\).
Together these turn \({H}_{X \times Y}\) into an ample \((X \times Y)\)-\((G \times H)\)-module.
As in \labelcref{eq_varRepOfCzX}, let \(\tilde{H}_X\coloneqq H_{X \times Y} \otimes \elltwo =H_X\otimes  \elltwo(H) \otimes H_Y\otimes \elltwo\) and define \(\tilde\rho_X\coloneqq \rho_X\otimes\id_{\elltwo(H) \otimes H_Y\otimes\elltwo}\colon \Cz(X)\to \Lin(\tilde H_X)\,\), \(\tilde\rho_Y \coloneqq \id_{H_X} \otimes \hat\rho_Y\) and \(\tilde{u}_{H} \coloneqq \id_{H_X} \otimes \hat{u}_H\).
Then \(\tilde\rho_Y \rtimes_{\red} \tilde{u}_H = \id_{H_X} \otimes (\hat\rho_Y \rtimes_{\red} \hat{u}_H) \colon \sHigComRed Y \rtimes_\red H \hookrightarrow \cB(\tilde H_X)\).
However, note that \(\tilde\rho_Y\) and \(\tilde\rho_X\) are defined in slightly different way than in \cref{sec:constr_noneq_slant} because of the additional tensor factor \(\elltwo(H)\).
In this way, we can use the reduced crossed product here.

Now, we also let \(\Fipro[G](\tilde\rho_X) \subset \cB(\tilde H_X)\) denote the \Cstar-algebra generated by all the \(G\)-equivariant operators of finite propagation. By redefining 
\[\tau\colon\Roe(\rho_{X\times Y})\to\Lin(\tilde H_X)\,,\quad S\mapsto S\otimes\id_{\ell^2}\,.\]
with these new representations and using 
\[\tilde\rho_Y \rtimes_{\mu} \tilde{u}_H \colon \sHigComRed Y \rtimes_{\mu} H \to \sHigComRed Y \rtimes_{\red} H \xrightarrow{\tilde\rho_Y \rtimes_{\red} \tilde{u}_H} \cB(\tilde H_X) \]
instead of $\tilde\rho_Y$, the obvious equivariant version of \Cref{lem:summarylemma} holds true.
Therefore, all constructions from the previous two sections can be performed in this modified equivariant setup.
We define all equivariant relative Roe and localization algebras in the canonical analogous fashion and obtain $*$-homomorphisms
\begin{alignat*}{1}
\Psi_\mu\colon\Roe[G\times H]((X,A)\times(Y,B))&\tensmax \sHigCorRed (Y,B)\rtimes_{\mu} H \to
\\&\to  \Fipro[G](X,A)/\Roe[G](X,A)
\\\Phi_\mu\colon\Roe[G\times H]((X,A)\times(Y,B))&\tensmax \sHigComRed(Y,B)\rtimes_{\mu} H  \to
\\&\to \Fipro[G](X,A)/\Roe[G](X,A)
\\\bar{\Upsilon}_{\LSym,\mu} \colon \Loc[G\times H]((X,A)\times(Y,B)) &\tensmax \sHigComRed (Y,B)\rtimes_{\mu} H  \to
\\& \to \FiproLoc[G](X,A) / \Cz([1,\infty),\Roe[G](X,A))
\\\Upsilon_{\LSym,\mu} \colon \Loc[G\times H]((X,A)\times(Y,B)) &\otimes \Cz(Y\setminus B, \Kom)\rtimes H  \to
\\& \to \Loc[G](X,A) / \Cz([1,\infty),\Roe[G](X,A))
\\\bar{\Upsilon}_{\LSym,\mu,0} \colon \Locz[G\times H]((X,A)\times(Y,B)) &\tensmax \sHigComRed (Y,B)\rtimes_{\mu} H  \to
\\&\to \FiproLocz[G](X,A) / \Cz((1,\infty),\Roe[G](X,A))
\end{alignat*}
which we use to define the following equivariant relative slant products.

\begin{defn}
Using the above modified $*$-homomorphisms, we define the equivariant relative slant products
\begin{alignat*}{2}
\K_p(\Roe[G\times H]((X,A)\times(Y,B))) &\otimes \K_{1-q}(\sHigCorRed (Y,B)\rtimes_{\mu} H) &&\to \K_{p-q}(\Roe[G] (X,A))\,,
\\\K^{G\times H}_p((X,A)\times(Y,B)) &\otimes \K^{q}_H(Y,B)&&\to \K^G_{p-q}(X,A)
\\\Strg^{G\times H}_p((X,A)\times(Y,B))&\otimes\K_{-q}(\sHigComRed (Y,B)\rtimes_{\mu} H)&&\to\K_{p-q}(\Roe[G](X,A))
\end{alignat*}
exactly as in \Cref{defn:relativeslantproducts}.
\end{defn}

The proofs of all properties of the non-equivariant slant products presented in the previous two sections also go through in this modified equivariant setup. Let us summarize these properties.

\begin{thm}\label{thm:slantproductpropertiessummarized}
The equivariant relative slant product between the structure group and the $\K$-theory of the stable Higson compactification has the following properties:
\begin{enumerate}
\item It is up to canonical isomorphism independent of the choice of ample modules 
 and natural under pairs of equivariant continuous coarse maps $\alpha\colon (X,A)\to (X',A')$ and $\beta\colon (Y,B)\to (Y',B')$ in the sense that
\begin{equation*}
\alpha_*(x/\beta^*(\theta))=(\alpha\times \beta)_*(x)/\theta
\end{equation*}
for all $x\in \Strg^{G\times H}_*((X,A)\times (Y,B))$ and $\theta\in \K^H_*(\sHigComRed (Y',B'))$.

\item It is compatible with the other slant products under the maps of the Higson--Roe exact sequence and its dual: 
The diagram
\[\xymatrix{
\K^{G\times H}_p((X,A)\times (Y,B))\otimes\K^q(Y,B)
\ar[r]^-{/}
&\K^G_{p-q}(X)
\ar[dd]^-{\Ind}
\\\Strg^{G\times H}_p((X,A)\times (Y,B))\otimes\K^q(Y,B)
\ar[u]\ar[d]&
\\\Strg^{G\times H}_p((X,A)\times (Y,B))\otimes\K_{-q}(\sHigComRed  (Y,B)\rtimes_\mu H)
\ar[r]^-{/}
&\K_{p-q}(\Roe[G](X))
}\]
commutes and the diagram
\[\xymatrix{
{\begin{array}{c}\K_p(\Roe[G\times H]((X,A)\times (Y,B))\\\otimes\K_{1-q}(\sHigComRed (Y,B)\rtimes_\mu H)\end{array}}
\ar[r]^-{\partial\otimes\id}\ar[d]
&{\begin{array}{c}\Strg^{G\times H}_{p-1}((X,A)\times (Y,B))\\\otimes\K_{1-q}(\sHigComRed (Y,B)\rtimes_\mu H)\end{array}}
\ar[d]^-{/}
\\{\begin{array}{c}\K_p(\Roe[G\times H]((X,A)\times (Y,B)))\\\otimes\K_{1-q}(\sHigCorRed (Y,B)\rtimes_\mu H)\end{array}}
\ar[r]^-{/}
&\K_{p-q}(\Roe[G](X))
}\]
commutes up to the sign $(-1)^p$.

\item It is compatible with the obvious equivariant relative generalization of the external product: If $G,H,K$ are countable discrete groups acting properly and isometrically on $(X,A),(Y,B),(Z,C)$ and if $Z$ has continuously bounded geometry, then the diagram
\[\xymatrix{
{\begin{array}{c}\K^G_m(X,A)\otimes\Strg^{H\times K}_p((Y,B)\times (Z,C))\\\otimes\K_{-q}(\sHigComRed (Z,C)\rtimes_\mu K)\end{array}}
\ar[r]^-{\id\otimes /}\ar[d]^{\times\otimes\id}
&\K_m^G(X)\otimes\K_{p-q}(\Roe[H](Y,B))
\ar[d]^{\times\circ(\Ind\otimes \id)}
\\{\begin{array}{c}\Strg^{G\times H\times K}_{m+p}((X,A)\times (Y,B)\times (Z,C))\\\otimes \K_{-q}(\sHigComRed (Z,C)\rtimes_\mu K)\end{array}}
\ar[r]^-{/}
&\K_{m+p-q}(\Roe[G\times H](X\times Y))
}\]
commutes.

\item It is compatible with the connecting homomorphisms in the long exact sequences associated to pairs of spaces:
The diagram
\[\xymatrix{
{\begin{array}{c}
\Strg^{G\times H}_p((X,A)\times (Y,B))
\\\otimes\K_{-q}(\sHigComRed (Y,B)\rtimes_\mu H)
\end{array}}
\ar[r]^-{/}\ar[d]^{\partial\otimes\id}
&\K_{p-q}(\Roe[G](X,A))
\ar[dd]^{\partial}
\\{\begin{array}{c}
\Strg^{G\times H}_{p-1}(X\times B\cup A\times Y,X\times B)
\\\otimes\K_{-q}(\sHigComRed (Y,B)\rtimes_\mu H)
\end{array}}\ar[d]_{\mathrm{exc}^{-1}}^{\cong}
&
\\{\begin{array}{c}
\Strg^{G\times H}_{p-1}(A\times (Y,B))
\\\otimes\K_{-q}(\sHigComRed (Y,B)\rtimes_\mu H)
\end{array}}
\ar[r]^-{/}
&\K_{p-q-1}(\Roe[G](A))
}\]
commutes and the diagram
\[\xymatrix{
{\begin{array}{c}
\Strg^{G\times H}_p((X,A)\times (Y,B))
\\\otimes\K_{-q}(\sHigComRed B\rtimes_\mu H)
\end{array}}
\ar[r]^-{\id\otimes\partial}\ar[d]^{\partial\otimes\id}
&{\begin{array}{c}
\Strg^{G\times H}_p((X,A)\times (Y,B))
\\\otimes\K_{-q-1}(\sHigComRed(Y,B)\rtimes_\mu H)
\end{array}}\ar[dd]^{/}
\\{\begin{array}{c}
\Strg^{G\times H}_{p-1}(X\times B\cup A\times Y,A\times Y)
\\\otimes\K_{-q}(\sHigComRed B\rtimes_\mu H)
\end{array}}\ar[d]_{\mathrm{exc}^{-1}}^{\cong}
&
\\{\begin{array}{c}
\Strg^{G\times H}_{p-1}((X,A)\times B)
\\\otimes\K_{-q}(\sHigComRed B\rtimes_\mu H)
\end{array}}
\ar[r]^{/}
&\K_{p-q-1}(\Roe[G](X,A))
}\]
commutes up to a sign $(-1)^{p-1}$.

\end{enumerate}
\end{thm}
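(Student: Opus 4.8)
The plan is to establish each of the four properties by transferring the corresponding statement already proved in the non-equivariant and relative setting: property (i) is the equivariant analogue of \Cref{lem_LocSlantNatural}; property (ii) combines the analogues of \Cref{lem:structuregroupslantcompatiblewithRoestableHigsoncoronaslant,lem:structuregroupslantcompatiblewithhomologycohomologyslant}; property (iii) is the analogue of \Cref{lem:structuregroupslantscompatiblewithcrossproducts}; and property (iv) combines the analogues of \Cref{lem:structuregroupslantcompatiblewithpairLESboundaryI,lem:structuregroupslantcompatiblewithpairLESboundaryII}. Each of these was proved by a diagram chase on \(\K\)-theory induced by a commutative (or sign-commutative) diagram of \textCstar-algebras built from the structure maps \(\Psi\), \(\Phi\), \(\Upsilon_{\LSym}\), \(\bar{\Upsilon}_{\LSym}\), \(\bar{\Upsilon}_{\LSym,0}\) together with the relevant boundary operators. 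Since the equivariant relative slant product is defined by literally the same composition using the modified maps \(\Psi_\mu\), \(\Phi_\mu\), \(\bar{\Upsilon}_{\LSym,\mu}\), \(\bar{\Upsilon}_{\LSym,\mu,0}\), the strategy is to verify that the underlying diagrams of \textCstar-algebras remain (sign-)commutative with exact rows in the modified setup.

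The crucial preliminary step is to record the equivariant relative versions of \Cref{lem:summarylemma,lem:PhiPsiUpsilonIdeals}. These are obtained exactly as indicated in the text: after introducing the extra tensor factor \(\elltwo(H)\) and replacing \(\tilde\rho_Y\) by \(\tilde\rho_Y \rtimes_\mu \tilde{u}_H\), Fell's absorption principle realizes \(\sHigComRed Y \rtimes_\red H\) faithfully on \(\tilde H_X\). The covariance of the pair \((\hat\rho_Y, \hat{u}_H)\), together with the fact that \(\tau(S)\) commutes with the \(H\)-representation, guarantees that \(\tau\) and \(\tilde\rho_Y \rtimes_\mu \tilde{u}_H\) still have images in \(\Fipro[G](\tilde\rho_X)\) which commute modulo \(\Roe[G](\tilde\rho_X)\). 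The ideal-mapping statements of \Cref{lem:PhiPsiUpsilonIdeals} then hold verbatim, because the support and propagation estimates used there are unaffected by the auxiliary factor \(\elltwo(H)\) and by passing to \(G\)-equivariant subalgebras. This yields the well-defined equivariant relative maps displayed before the definition.

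With these in hand, each property follows by rerunning the respective diagram chase. For (i) one adapts the naturality argument of \Cref{lem_LocSlantNatural}, using \(G\)- and \(H\)-equivariant covering isometries. For (ii) and (iii) the commutative diagrams of \textCstar-algebras from \Cref{lem:structuregroupslantcompatiblewithRoestableHigsoncoronaslant,lem:structuregroupslantcompatiblewithhomologycohomologyslant,lem:structuregroupslantscompatiblewithcrossproducts} are reproduced with \(\Roe\), \(\Loc\) and their companions replaced by their equivariant counterparts and with \(\sHigComRed Y\) replaced by \(\sHigComRed Y \rtimes_\mu H\); here exactness of the crossed product functor \(\rtimes_\mu\) is exactly what keeps the rows short exact, so that the induced sequences of boundary maps agree up to the same signs as before. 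For (iv) one repeats the splitting argument of \Cref{lem:structuregroupslantcompatiblewithpairLESboundaryI}, relying on the direct-sum decomposition of the relative ideal and the excision isomorphism, together with the naturality-of-boundary-map argument of \Cref{lem:structuregroupslantcompatiblewithpairLESboundaryII}; the intersection identity \labelcref{eq:Roeproductintersection} and the decompositions of relative localization algebras hold equally in the equivariant setting.

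The main obstacle is to ensure that introducing the reduced crossed product via Fell's absorption does not disturb the delicate commutation-modulo-ideal relations on which all these \textCstar-algebra diagrams rest, and that the passage from the maximal to an arbitrary exact crossed product \(\rtimes_\mu\) preserves exactness of every row appearing in the diagrams. Once the equivariant analogue of \Cref{lem:PhiPsiUpsilonIdeals} is established and exactness of \(\rtimes_\mu\) is invoked, the remaining verifications are purely formal diagram chases identical in structure to the non-equivariant proofs, so that no genuinely new difficulty arises.
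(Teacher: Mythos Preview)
Your proposal is correct and follows precisely the approach the paper takes: the paper does not give a separate proof for this theorem but simply states that ``the proofs of all properties of the non-equivariant slant products presented in the previous two sections also go through in this modified equivariant setup,'' and your write-up is a faithful (and more explicit) elaboration of that observation.
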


\section{Classifying spaces and \texorpdfstring{$\sigma$}{\textsigma}-spaces}

In the next section, we are going to apply our slant products to the classifying spaces $\Efree G$ for free $G$-actions and $\Eub G$ for proper $G$-actions. As these spaces are in general not locally compact and whence cannot be equipped with proper metrics, we have to explain how this is done.

Any $G$-invariant, $G$-compact subset $X$ of $\Efree G$ or $\Eub G$ can be equipped with a canonical quasi-isometry class of $G$-invariant proper metrics. Hence we can simply define the homological groups $\K^G_*(\Efree G)$, $\Strg^G_*(\Efree G)$, $\K_*(\Roe[G](\Efree G))$ as well as $\K^G_*(\Eub G)$, $\Strg^G_*(\Eub G)$, $\K_*(\Roe[G](\Eub G))$ by taking the direct limits of the groups $\K^G_*(X)$, $\Strg^G_*(X)$, $\K_*(\Roe[G](X))$ over all subspaces $X$ of this type.
Note that we always have $\K_*(\Roe[G](\Efree G))\cong\K_*(\CstarRed G)\cong \K_*(\Roe[G](\Eub G))$ canonically.
Moreover, we will see in \Cref{lem:autosigma,lem:GCWmodelsforEG} below that one can always choose $G$-CW-models for $\Efree G$ and $\Eub G$ which contain cofinal sequences $X_0\subset X_1\subset\dots$ of such subspaces, over which the direct limits can be taken. 

The cohomological groups, however, should not be defined in the dual way by taking inverse limits, but instead one would expect them to satisfy $\varprojlim^1$-sequences
\[0\to {\varprojlim_{n\in\N}}^1\,\K_H^{*-1}(X_n)\to\K_H^*(\Efree G) \to\varprojlim_{n\in\N}\K_H^*(X_n)\to 0\]
and analogously for $\K_*(\sHigComRed(\Efree G)\rtimes_\mu H)$ and $\K_*(\sHigCorRed(\Efree G)\rtimes_\mu H)$ as well as the corresponding groups for $\Eub G$.
The way to go is to consider $\Efree G$ and $\Eub G$ as $\sigma$-spaces, i.\,e.\ spaces filtered by an increasing sequence of subspaces, following an idea pioneered by Emerson and Meyer in \cite[Sec.~2]{EmeMey}, see also \cite[Sec.~3]{WulffCoassemblyRinghomo} and \cite[Definitions~5.1--5.3]{wulff2020equivariant}. 
We make the following definition, which specifies the type of $\sigma$-space to which the theory of the preceding sections can be applied swiftly.

\begin{defn}
\begin{itemize}
\item A \emph{$\sigma$-$G$-metric space}  $\cX$ is an increasing sequence $X_0\subset X_1\subset X_2\subset\dots$ of proper metric spaces of continuously bounded geometry equipped with proper isometric $G$-actions such that for all $m\leq n$ the inclusion $X_m\subset X_n$ is a $G$-equivariant continuous coarse embedding with closed image.

By an abuse of notation we will also write $\cX$ for the set $\cX=\bigcup_{n\in\N}X_n$ equipped with the final topology. A subset $\cA\subset \cX$ is open/closed if and only if each intersection $A_n\coloneqq\cA\cap X_n$ is open/closed.
\item We say that another $\sigma$-$G$-metric space $\cA=\bigcup_{n\in\N}A_n$ is a \emph{subspace} of $\cX$ if $\cA\subset \cX$, $A_n=\cA\cap X_n$ and each $A_n\subset X_n$ carries the restricted metric and $G$-action. Note that these correspond exactly to the closed $G$-invariant subsets of $\cX$.
\item We call a map $f\colon \cX\to \cY$ between $\sigma$-$G$-metric spaces a \emph{$\sigma$-map} if \linebreak 
$\forall m\exists n\colon f(X_m)\subset Y_n$. Note that $f$ is continuous with respect to the final topologies if and only if all the restricted maps $f|_{X_m}\colon X_m\to Y_n$ are continuous.
We call $f$ \emph{$\sigma$-coarse} if all the restrictions $f|_{X_m}\colon X_m\to Y_n$ are coarse maps.

\item We say that a \(\sigma\)-\(G\)-metric space $\cX=\bigcup_{n\in\N}X_n$ is \emph{\(\sigma\)-\(G\)-compact} if each \(X_n\) is \(G\)-compact.
\end{itemize}
\end{defn}

The reader should stay alert that the definition asks for three properties, namely properness of the metric, continuously bounded geometry and properness as well as isometry of the $G$-actions, which are not reflected in the name \enquote{$\sigma$-$G$-metric space}. We decided to use this shorter, less descriptive name and also not consider more general types of $\sigma$-spaces solely out of convenience, because any more general setup is not relevant for our applications in the final section.

Proper metric spaces $X$ of continuously bounded geometry equipped with an isometric $G$-action can be considered as $\sigma$-$G$-metric spaces by equipping them with the trivial filtration $X=X_0=X_1=\dots$.

Other important examples of such spaces are proper regular $G$-CW-complexes which are filtrated by a sequence of increasing $G$-invariant, $G$-finite subcomplexes.
By properness we mean that all the cell stabilizers are finite, which implies that all $G$-finite subcomplexes are locally compact and the restricted $G$-action on them is proper.
It remains to metrize the subcomplexes and, although all $G$-invariant metrics on them are proper and coarsely equivalent to each other, we need to choose ones which have continuously bounded geometry. To facilitate this, we assume that the $G$-CW-complex is regular, that is, that all closed cells $G/H\times D^k$ are embedded homeomorphically into the complex. Indeed, it is readily deduced by induction from the following proposition that all proper regular $G$-finite $G$-CW-complexes and even pairs of them can be metrized with continuously bounded geometry.

\begin{prop}\label{prop:cellgluingcontinuouslyboundedgeometry}
Let $Y$ be a proper metric space of continuously bounded geometry equipped with a proper isometric $G$-action and $f\colon G/H\times S^{k-1}\to Y$ an injective $G$-equivariant continuous map with $H\subset G$ a finite subgroup.
Then the proper $G$-space $Y\cup_f G/H\times D^k$ can also be equipped with a proper equivariant metric of continuously bounded geometry. 
\end{prop}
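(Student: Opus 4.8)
I would metrize $Z \coloneqq Y\cup_f(G/H\times D^k)$ by attaching each disk to $Y$ through an \emph{isometric collar}, and then verify the two conditions of \Cref{defn:contboundedGeometry} by playing the fixed geometry of the model cell off against the continuously bounded geometry of $Y$. The first step is to exploit equivariance to reduce to a single model cell. Since $f$ is $G$-equivariant and $G$ acts trivially on $S^{k-1}$, one has $f(gH,w)=g\cdot f(eH,w)$, so $C_{gH}\coloneqq f(\{gH\}\times S^{k-1})$ equals $g\cdot C_{eH}$ and, as $d_Y$ is $G$-invariant, the restricted metrics on the $C_{gH}$ are all isometric to the pulled-back metric $d_f(w,w')\coloneqq d_Y(f(eH,w),f(eH,w'))$ on $S^{k-1}$. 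As $f$ restricts to a homeomorphism onto the compact set $C_{eH}$, the function $d_f$ is a genuine metric inducing the standard topology on $S^{k-1}$. It therefore suffices to metrize one cell $G$-equivariantly and transport by the action.

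For the cell itself I would fix a smooth metric $g_0$ on $D^k$ of diameter $\le 1$ and a collar $S^{k-1}\times[0,1]\hookrightarrow D^k$ with $S^{k-1}\times\{1\}=\partial D^k$. On the collar I put the path metric built from the radial coordinate $s$ and the interpolating family $\rho_s\coloneqq(1-s)\,d_{S^{k-1}}+s\,d_f$, which runs from the round metric at $s=0$ (matched to $g_0$) to $d_f$ at $s=1$. Because $\rho_1=d_f$ agrees with the restricted metric of $C_{eH}\subset Y$, the boundary sphere is glued to $Y$ along an isometry of a closed subspace, so the standard gluing theorem for length spaces yields a genuine $G$-invariant length metric $d_Z$ inducing the quotient topology; one checks along the way that $Y\hookrightarrow Z$ is a coarse equivalence onto a coarsely dense subset. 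Properness of $d_Z$ then follows from properness of the $G$-action: the compact set $C_{eH}$ meets only finitely many translates $g\cdot C_{eH}$, so the family $\{C_{gH}\}$ is locally finite and distinct attaching regions are uniformly separated, $d_Y(C_{gH},C_{g'H})\ge\delta_0>0$ for $gH\ne g'H$; hence every closed ball $\clBall_R(z)$ lies in a bounded part of $Y$ together with finitely many (compact) cells, and is compact.

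It remains to verify \Cref{defn:contboundedGeometry}. For the covering condition I would take $\hat Z_r$ to be the union of a net $\hat Y_r$ furnished by the continuously bounded geometry of $Y$ with the $G$-orbit of a fixed $r$-net of the model cell; local finiteness of the cells makes $K_{r,R}$ finite. For the small-scale condition $K_\alpha=\limsup_{r\to0}K_{r,\alpha r}<\infty$, fix small $r$ and a ball $\clBall_{\alpha r}(z)$; by the uniform separation $\delta_0$ only one seam is relevant once $r$ is small. Away from seams the count is bounded by $K^Y_{r,\alpha r}$ (when $z\in Y$) or, inside a cell, by a bound of the type $(C\alpha)^k$ coming from the fixed manifold $(D^k,g_0)$, which has continuously bounded geometry by \Cref{prop:ricci_implies_cbg}. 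The essential estimate is the count at a single collar level: from $\rho_s\le d_{S^{k-1}}+d_f$ any two $\rho_s$-$r$-separated points are $r$-separated in the sum, so whenever they are closer than $r/2$ in the round metric they are at least $r/2$ apart in $d_f$, i.e.\ their $f$-images are $r/2$-separated in $Y$. Covering the round sphere by $O(\alpha^{k-1})$ balls of radius $r/4$ and applying the bounded geometry of $Y$ inside each bounds the level count by $(C\alpha)^{k-1}K^Y_{r/2,2\alpha r}$; the radial interval contributes a further factor of order $\alpha$, whence $K_\alpha\le C\bigl(\alpha^{k}K^Y_{C\alpha}+(C\alpha)^k\bigr)<\infty$.

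The hard part is precisely this seam count. Since $f$ is merely continuous and injective, the metric $d_f$ may behave wildly at small scales and the collar \emph{by itself} need not have continuously bounded geometry. What rescues the argument is that the seam $C_{eH}$ is an isometrically embedded subset of the bounded-geometry space $Y$, so separated families on the seam are automatically controlled by $Y$, while the round component of $\rho_s$ controls everything transverse to the seam; the split ``close in the round metric $\Rightarrow$ separated in $d_f$'' is what lets these two controls be combined uniformly. By equivariance there is a single model cell up to isometry, so all bounds are uniform over the cells, and the inductive metrization of an entire proper regular $G$-finite $G$-CW-complex (and of pairs) is then a routine cell-by-cell application of this construction.
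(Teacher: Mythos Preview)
Your approach is quite different from the paper's. The paper does not interpolate between two unrelated metrics at all: it identifies each cell $\{gH\}\times D^k$ with the \emph{cone} over $Z^g\coloneqq f(\{gH\}\times S^{k-1})$, first observes that $Z^g$ inherits continuously bounded geometry from $Y$ (as any compact subset does, by choosing net points in $Z^g$ close to the points of $\hat Y_r$), and then proves a separate lemma showing that the cone over any compact metric space of continuously bounded geometry carries such a metric restricting to the given one on the base. That cone metric is constructed explicitly via a Kuratowski embedding $Z^g\hookrightarrow V$ and the map $(y,t)\mapsto(ty,Ct)\in V\oplus_1\R$, so that at level $t$ the induced metric is simply $t\cdot d_Y$; the witnessing nets are unions $\bigcup_{i=0}^n\hat Y_{C/(2i)}\times\{i/n\}$ of rescaled nets of $Z^g$ placed at the cone levels. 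Gluing these cones to $Y$ then reduces to checking that a locally finite union of spaces of continuously bounded geometry with uniform constants again has continuously bounded geometry.

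Your seam estimate has a real gap. The only inequality you invoke is $\rho_s\le d_{S^{k-1}}+d_f$; this tells you that within a round $r/4$-ball any two $\rho_s$-$r$-separated points are $d_f$-$(r/2)$-separated, but to then apply $K^Y_{r/2,2\alpha r}$ you would also need those points to lie in a $Y$-ball of radius comparable to $\alpha r$, and nothing you wrote furnishes such an upper bound on $d_f$. Symmetrically, the claim that $O(\alpha^{k-1})$ round balls of radius $r/4$ suffice presupposes that the set in question has round diameter $\lesssim\alpha r$, which fails for $s$ near $1$ because $f^{-1}$ is merely continuous and a $\rho_s$-small set can be spread over the whole round sphere. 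What is missing are the \emph{lower} bounds $\rho_s\ge(1-s)\,d_{S^{k-1}}$ and $\rho_s\ge s\,d_f$, which at least confine the $\rho_s$-ball in one of the two reference metrics once $s$ is on the appropriate side of $1/2$; even with those in hand, converting ``$\rho_s$-separated'' into ``separated in a single reference metric inside a ball of controlled radius'' is delicate for a convex combination of two metrically unrelated metrics, and this is precisely what the paper's cone construction sidesteps by working with the single metric $d_Y$ rescaled along the cone parameter.
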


First we need to prove a lemma.

\begin{lem}
Let $Y$ be a compact metric space of continuously bounded geometry. Then the cone $X\coloneqq Y\times[0,1]/Y\times\{0\}$ over $Y$ can be equipped with a metric of continuously bonded geometry whose restriction to $Y\times \{1\}$ is the original metric on $Y$.
\end{lem}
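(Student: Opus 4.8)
The plan is to equip the cone with an explicit \enquote{linearly warped} metric that scales each cross-section $Y\times\{t\}$ by the factor $t$. Set $L:=\diam(Y)$, which is finite since $Y$ is compact; we may assume $L>0$, as otherwise $Y$ is a single point and $X$ is an interval. Writing $d_Y$ for the given metric on $Y$, I would define on $X$ the function
\[ d\bigl((y_1,t_1),(y_2,t_2)\bigr) := L\,\lvert t_1-t_2\rvert + \min(t_1,t_2)\,d_Y(y_1,y_2). \]
The cross-sectional term $\min(t_1,t_2)\,d_Y$ vanishes whenever $t_1=0$ or $t_2=0$, so all points of $Y\times\{0\}$ are identified and $d$ descends to the quotient $X$. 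It is visibly symmetric, and it is positive off the diagonal because distinct points of $X$ either have different heights (so the radial term is positive) or lie in a common cross-section $Y\times\{t\}$ with $t>0$ and $y_1\neq y_2$. At the top one has $d\bigl((y_1,1),(y_2,1)\bigr)=d_Y(y_1,y_2)$, so the restriction to $Y\times\{1\}$ is exactly the original metric, as required; and the metric topology agrees with the cone topology, since away from the cone point $d$ is comparable to the product metric, while $\clBall_\varepsilon(\ast)=Y\times[0,\varepsilon/L]$ is a basic neighbourhood of the cone point $\ast$.

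The only axiom requiring work is the triangle inequality. Since $\lvert t_1-t_3\rvert\le\lvert t_1-t_2\rvert+\lvert t_2-t_3\rvert$, after cancelling the radial contributions it suffices to bound the difference of the cross-sectional terms by the radial slack $S:=L(\lvert t_1-t_2\rvert+\lvert t_2-t_3\rvert-\lvert t_1-t_3\rvert)\ge0$. Using $d_Y(y_1,y_3)\le d_Y(y_1,y_2)+d_Y(y_2,y_3)$ together with the $1$-Lipschitz property of $s\mapsto\min(a,s)$, a short case analysis on the ordering of $t_1,t_2,t_3$ (WLOG $t_1\le t_3$) reduces the whole inequality to the single estimate $d_Y(y_1,y_2)+d_Y(y_2,y_3)\le 2L$, which holds because $L=\diam(Y)$. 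This is precisely the choice of $L$ that makes the \enquote{shortcut} from $(y_1,1)$ to $(y_2,1)$ through the cone point (of length $2L$) no shorter than the direct top path; it is the reason one must stretch the radial direction by the factor $L$ rather than using $[0,1]$ naively.

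For continuously bounded geometry the point is to produce, for each small $r$, a covering net of $X$ whose local multiplicity is bounded independently of $r$. I would discretise the heights at radial spacing $r/2$, i.e.\ put $t_j:=jr/(2L)$, and at level $j\ge1$ take an $(L/j)$-net $\hat Y_{L/j}$ for $Y$ supplied by the continuously bounded geometry of $Y$, declaring
\[ \hat X_r:=\{\ast\}\cup\bigl\{(\hat y,t_j) : j\ge1,\ \hat y\in\hat Y_{L/j}\bigr\}. \]
The decisive feature is \emph{self-similarity}: because the cross-sectional distance at height $t_j$ is $d_Y$ scaled by $t_j$, a radius-$r$ ball there is a $d_Y$-ball of radius $\asymp L/j$, and this net radius $L/j$ is \emph{independent of $r$}. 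A direct computation shows $\hat X_r$ is an $r$-net. As $X$ is compact, the large-scale part of the definition is automatic, and only the limit $r\to0$ with ball radius $\alpha r$ must be controlled.

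To bound $\#(\hat X_r\cap\clBall_{\alpha r}(x))$ for $x=(y_0,t_0)$ uniformly in $x$, I would distinguish two regimes; the radial term forces only $O(\alpha)$ levels $t_j$ to meet the ball, so only the number of net points per level needs to be controlled. If $x$ is far from the cone point, $t_0>2\alpha r/L$, then on the relevant levels $\min(t_0,t_j)$ is comparable to $t_0$ and the admissible $d_Y$-radius is comparable to the net radius; the per-level count is then governed by the continuously bounded-geometry constants of $Y$ at comparable radii, which stay bounded as the radius tends to $0$ by the defining condition $K^Y_\beta<\infty$ and, for radii bounded below, by finiteness of a net of the compact space $Y$. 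If instead $x$ is near the cone point, $t_0\le 2\alpha r/L$, then $\clBall_{\alpha r}(x)\subseteq\clBall_{3\alpha r}(\ast)$, which meets only $O(\alpha)$ levels $j$, each contributing at most $\#\hat Y_{L/j}$ points; since the relevant net radii $L/j$ are bounded below by a fixed positive number (for fixed $\alpha$), this bound is \emph{independent of $r$}. Combining the two regimes gives $\limsup_{r\to0}K_{r,\alpha r}<\infty$, which is continuously bounded geometry. The main obstacle is exactly this behaviour at the cone point: a naive \enquote{collapse} metric, which caps distances instead of scaling them, would pack unboundedly many mutually $r$-separated points into an $O(r)$-ball around $\ast$ and fail bounded geometry, and it is the linear warping $\min(t_1,t_2)\,d_Y$ that cures this.
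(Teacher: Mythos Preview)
Your proposal is correct and follows the same overall strategy as the paper: endow the cone with a ``linearly warped'' metric so that the slice $Y\times\{t\}$ carries $t\cdot d_Y$, then discretise the height at spacing proportional to $r$ and place at height $t_j$ a $Y$-net of radius proportional to $1/j$, exploiting that the resulting ratio of ball radius to net radius is bounded independently of $r$.

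The one noteworthy difference is in how the metric is obtained. The paper takes an isometric (e.g.\ Kuratowski) embedding $\iota\colon Y\hookrightarrow V$ into a normed space, embeds the cone into $V\oplus_1\R$ via $(y,t)\mapsto(ty,Ct)$ with $C=\sup_y\|\iota(y)\|$, and uses the restricted norm $d_X((x,s),(y,t))=C|s-t|+\|sx-ty\|$; the triangle inequality then comes for free. You instead write down an intrinsic formula $L|t_1-t_2|+\min(t_1,t_2)\,d_Y(y_1,y_2)$ and must verify the triangle inequality by hand (which you do, via the case split and the bound $d_Y(y_1,y_2)+d_Y(y_2,y_3)\le 2L$). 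Your route is slightly more elementary in that it avoids the embedding theorem, at the cost of that extra verification; the net construction and the two-regime counting argument (away from versus near the apex) are essentially the same as the paper's.
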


\begin{proof}
Pick any isometric embedding $\iota\colon Y\to V$ into a normed space $(V,\|\blank\|)$ (e.\,g.\ the Kuratowski embedding) and let $C\coloneqq\sup_{y\in Y}\|y\|$, which is finite because $Y$ is compact. Then the cone $X$ can be embedded into the normed space $V\oplus_1 \R$ via $(y,t)\mapsto (ty,Ct)$ and we shall equip it with the metric induced by this norm, that is,
\[d_X((x,s),(y,t))=C|s-t|+\|sx-ty\|\,.\] 

Let $\hat Y_r\subset Y$ be the subsets witnessing continuous bounded geometry with corresponding bounds $K_{r,R}$. 
For each $n\in\N\setminus\{0\}$ we consider the subsets
\[\hat X_{\frac{C}{n}}\coloneqq \bigcup_{i=0}^n \hat Y_{\frac{C}{2i}}\times \left\{\frac{i}{n}\right\}\subset X\]
where we let $\hat Y_{C/0}$ denote any nonempty subset, such that we obtain a single point at the apex of the cone.

The $\frac{C}{n}$-balls with centers in $\hat X_{\frac{C}{n}}$ cover $X$: Given $(x,s)\in X$ there exist $i\in\{0,\dots,n\}$ with $|S-\frac{i}{n}|\leq \frac{1}{2n}$ and $y\in \hat Y_{\frac{C}{2i}}$ with $d_Y(x,y)<\frac{C}{2i}$, so that
\begin{align*}
d_X\left((x,s),\left(y,\frac{i}{n}\right)\right)&\leq d_X\left((x,s),\left(x,\frac{i}{n}\right)\right)+d_X\left(\left(x,\frac{i}{n}\right),\left(y,\frac{i}{n}\right)\right)
\\&=C\left|s-\frac{i}{n}\right|+\frac{i}{n}d_Y(x,y)<\frac{C}{n}\,.
\end{align*}

On the other hand, for all $(x,s)\in X$ and $(y,\frac{i}{n})\in\hat X_{\frac{C}{n}}$  we have 
\begin{align*}
d_X\left((x,s),\left(y,\frac{i}{n}\right)\right)&= C\left|s-\frac{i}{n}\right|+\left\|sx-\frac{i}{n}y\right\|
\\&\geq C\left|s-\frac{i}{n}\right|+\frac{i}{n}\|x-y\|-\left|s-\frac{i}{n}\right|\|x\|
\\&\geq \frac{i}{n}d_Y(x,y)
\end{align*}
where the last inequality is due to our choice of the constant $C$. This implies that for any $\alpha>0$ the number of elements of
\[\hat X_{\frac{C}{n}}\cap \clBall_{\alpha\cdot\frac{C}{n}}(x,s)\subset \bigcup_{|i-sn|\leq \alpha} \left(\hat Y_{\frac{C}{2i}}\cap\clBall_{\alpha\frac{C}{i}} (x) \right)\times \left\{\frac{i}{n}\right\}\]
is bounded by 
\[L_{\frac{C}{n},\alpha\frac{C}{n}}\coloneqq (2\lceil\alpha\rceil+1)\cdot \sup_{i\in\N\setminus\{0\}}K_{\frac{C}{2i},2\alpha\frac{C}{2i}}\]
and this is finite, because the supremum is taken over a sequence with finite limit superior. 

For arbitrary $r>0$ choose $n\in\N\setminus\{0\}$ with $\frac{C}{n}\leq r<\frac{C}{n-1}$ and define $\hat X_r\coloneqq \hat{X}_{\frac{C}{n}}$.
It is then clear that these sets witness continuously bounded geometry for suitably chosen constants $L_{r,R}$.
\end{proof}

\begin{proof}[Proof of \Cref{prop:cellgluingcontinuouslyboundedgeometry}]
Since $Y$ has continuously bounded geometry, so do the compact subspaces $Z^g\coloneqq f(\{gH\}\times S^{k-1})$. Indeed, this is readily checked by choosing subsets $\hat Z^g_r\subset Z^g$ that are sufficiently close to the subsets $\hat Y_r$ witnessing the continuously bounded geometry of $Y$. Then each cell $\{gH\}\times D^k$ can be identitfied with the cone over $Z^g$, because $f$ is injective, and hence can be equipped with a metric of continuously bounded geometry by the preceding lemma such that $f$ restricts to an isometry on the boundary $\{gH\}\times S^{k-1}$.

We do this for all cells $\{gH\}\times D^k$ in the same way and thereby construct a $G$-equivariant proper metric on $Y\cup_f G/H\times D^k$. It is a locally finite union of metric spaces of continuously bounded geometry and the bounds $K_{r,R}$ can be chosen equal for all of them. By taking the union of the witnessing subsets it is now evident that $Y\cup_f G/H\times D^k$ must also have continuously bounded geometry.
\end{proof}

\begin{lem}\label{lem:autosigma}
Let $\cX=\bigcup_{m\in\N}X_m$ be a \(\sigma\)-\(G\)-compact $\sigma$-$G$-metric space and let $\cY=\bigcup_{n\in\N}Y_n$ be $G$-CW-complexes filtrated as above. Then any $G$-equivariant continuous map $f\colon \cX\to \cY$ is automatically also a $\sigma$-coarse $\sigma$-map.
\end{lem}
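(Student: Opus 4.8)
The plan is to verify the two requirements separately — first that $f$ is a $\sigma$-map, then that it is $\sigma$-coarse — and to exploit throughout the $\sigma$-$G$-compactness of $\cX$ together with the properness and isometry of the $G$-actions. Continuity of the restrictions $f|_{X_m}$ will not require extra work, since $Y_n$ carries the subspace topology inherited from $\cY$ as a subcomplex.

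First I would show that $f$ is a $\sigma$-map. Fix $m$ and choose a compact subset $K\subset X_m$ with $G\cdot K=X_m$, which exists because $X_m$ is $G$-compact. As $f$ is continuous into $\cY$, the image $f(K)$ is compact. Now I would invoke the standard fact that a compact subset of the $\sigma$-space $\cY$ — the colimit, with the final topology, of the increasing closed Hausdorff subcomplexes $Y_n$ — must already be contained in a single stage $Y_n$. If this failed, one could extract points $y_k\in f(K)$ with $y_k\in Y_{n_k}\setminus Y_{n_{k-1}}$ for a strictly increasing sequence $n_k$; since each $Y_n$ is closed and $T_1$, every subset of $\{y_k\}$ meets each $Y_n$ in a finite, hence closed, set, so $\{y_k\}$ would be an infinite closed discrete subset of the compact set $f(K)$, which is impossible. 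Finally, $G$-equivariance of $f$ and $G$-invariance of $Y_n$ give $f(X_m)=f(G\cdot K)=G\cdot f(K)\subset Y_n$, which is precisely the $\sigma$-map property.

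For $\sigma$-coarseness, fix $m$ and the corresponding $n$ with $f(X_m)\subset Y_n$, and show that $f|_{X_m}\colon X_m\to Y_n$ is bornologous and metrically proper. For the bornologous property, fix $R>0$. Since $X_m$ is proper, the closed neighbourhood $\clBall_R(K)$ is compact, so the continuous function $(x,x')\mapsto d(f(x),f(x'))$ attains a finite maximum $S$ on the compact set $\{(x,x')\in K\times\clBall_R(K)\mid d(x,x')\le R\}$. Given arbitrary $x,x'\in X_m$ with $d(x,x')\le R$, write $x=g\cdot k$ with $k\in K$; since the action is isometric, $(k,g^{-1}x')$ lies in that compact set, and equivariance together with isometry yields $d(f(x),f(x'))=d(f(k),f(g^{-1}x'))\le S$. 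For metric properness, let $B\subset Y_n$ be bounded, hence with compact closure $\overline{B}$ as $Y_n$ is proper. Any $x\in f^{-1}(B)$ admits a decomposition $x=g\cdot k$ with $k\in K$, and $g\cdot f(k)=f(x)\in B$ forces $g\cdot f(K)\cap B\neq\emptyset$; as the $G$-action on $Y_n$ is proper and $L\coloneqq f(K)\cup\overline{B}$ is compact, the relation $gL\cap L\neq\emptyset$ holds only for finitely many $g$, say $g\in F$. Hence $f^{-1}(B)\subset\bigcup_{g\in F}g\cdot K$, a finite union of compacts, so $f^{-1}(B)$ is bounded.

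The only genuinely non-formal input is the topological fact used in the second paragraph, that compact subsets of the colimit $\cY$ lie in a single $Y_n$; I expect this to be the main (though entirely standard) obstacle. Everything else is a routine transport of compactness bounds along the cocompact isometric group action, made possible exactly by the standing hypotheses that each $X_m$ is $G$-compact and proper and that the $G$-action on $\cY$ is proper and isometric.
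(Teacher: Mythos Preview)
Your proof is correct and follows essentially the same approach as the paper: the \(\sigma\)-map property is obtained by pushing a compact fundamental domain \(K\subset X_m\) forward and observing that the compact image \(f(K)\) lands in a single filtration stage \(Y_n\), while coarseness of the restrictions is deduced from \(G\)-equivariance together with \(G\)-compactness of \(X_m\) and \(Y_n\). You have simply spelled out in full the two steps that the paper leaves as one-liners (in particular, the paper invokes the CW fact that compacta meet only finitely many cells, whereas you give the general closed-discrete-subset argument for colimits, and the paper just states that coarseness follows from \(G\)-equivariance and \(G\)-compactness without writing out the bornologous/proper verification).
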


\begin{proof}
As each $X_m$ is $G$-compact, we have $X_m=G\cdot K$ for a compact subset $K\subset X_m$. Then $f(K)$ is compact and thus contained in a finite subcomplex of $\cY$. Therefore, there exists $n\in\N$ with $f(K)\subset Y_n$, so $f(X_m)=G\cdot f(K)\subset Y_n$. This shows that $f$ is a $\sigma$-map.
Coarseness of the restrictions follows from $G$-equivariance, because all $X_m,Y_n$ are $G$-compact.
\end{proof}

The following lemma shows that the classifying spaces indeed fit into this set-up and that their homotopy type is still unique, despite putting the additional structure of a filtration on them.

\begin{lem}\label{lem:GCWmodelsforEG}
There are $G$-CW-models for $\Efree G$ and $\Eub G$ with countably many cells and which, therefore, are the union of an increasing sequence of $G$-invariant, $G$-finite subcomplexes.
Any two choices of such models equipped with any sequences of subcomplexes of this type are homotopy equivalent in the category of $\sigma$-$G$-metric spaces and $G$-equivariant continuous $\sigma$-coarse maps. 
\end{lem}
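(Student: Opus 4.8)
The plan is to establish the two assertions separately: first the existence of countably generated regular models carrying $\sigma$-$G$-metric space structures, and then their uniqueness up to homotopy, the latter being a clean consequence of \Cref{lem:autosigma}. The argument is uniform for $\Efree G$ and $\Eub G$, the only difference being the family of admissible isotropy groups (trivial, resp.\ finite) and the corresponding universal property.

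For existence I would start from standard $G$-CW-models: since $G$ is countable, the infinite join $G * G * \dots$ is a countable free simplicial model of $\Efree G$, and the literature on classifying spaces for proper actions provides countable models of $\Eub G$; after passing to a barycentric subdivision we may assume both models are regular, i.e.\ that every attaching map $G/H \times S^{k-1} \to Y^{(k-1)}$ is injective. Enumerating the countably many orbits of cells yields an exhaustion by $G$-invariant, $G$-finite subcomplexes $Y_0 \subset Y_1 \subset \dots$, built up one orbit of cells at a time. Starting from the discrete $G$-set of vertices (which trivially has continuously bounded geometry, as it is uniformly discrete) and applying \Cref{prop:cellgluingcontinuouslyboundedgeometry} inductively equips each $Y_n$ with a proper $G$-invariant metric of continuously bounded geometry. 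Since any two $G$-invariant proper metrics on a $G$-finite complex are coarsely equivalent, the restriction of the metric on $Y_n$ to the closed subcomplex $Y_m$ is coarsely equivalent to the metric on $Y_m$; hence each inclusion $Y_m \subset Y_n$ is a continuous coarse embedding with closed image, and $\cY = \bigcup_n Y_n$ is a $\sigma$-$G$-compact $\sigma$-$G$-metric space.

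For uniqueness, let $\cY = \bigcup_n Y_n$ and $\cY' = \bigcup_n Y'_n$ be two such models of the same classifying space. The universal property of $\Efree G$ (resp.\ $\Eub G$) in the equivariant homotopy category supplies $G$-equivariant continuous maps $f \colon \cY \to \cY'$ and $g \colon \cY' \to \cY$ together with $G$-homotopies $H \colon \cY \times [0,1] \to \cY$ and $H' \colon \cY' \times [0,1] \to \cY'$ from $g \circ f$ and $f \circ g$ to the respective identities. By \Cref{lem:autosigma} the maps $f$ and $g$ are automatically $\sigma$-coarse $\sigma$-maps, their sources being $\sigma$-$G$-compact and their targets filtered $G$-CW-complexes. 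To promote the homotopies to morphisms of the category I would observe that $\cY \times [0,1] = \bigcup_n (Y_n \times [0,1])$ is again a $\sigma$-$G$-compact $\sigma$-$G$-metric space: each $Y_n \times [0,1]$ is $G$-compact with the $G$-action trivial on the $[0,1]$-factor, and a product of proper metric spaces of continuously bounded geometry again has continuously bounded geometry (take products of the witnessing nets, noting that $[0,1]$ trivially qualifies). Applying \Cref{lem:autosigma} a second time shows that $H$ and $H'$ are $\sigma$-coarse $\sigma$-maps, hence homotopies within the category, so $f$ and $g$ are mutually inverse homotopy equivalences of $\sigma$-$G$-metric spaces.

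The main obstacle is confined to the existence half: arranging that the chosen models are regular so that \Cref{prop:cellgluingcontinuouslyboundedgeometry} applies with injective attaching maps, and verifying that the cell-by-cell metrization yields continuously bounded geometry coherently across the exhaustion. Once the models are in place, the uniqueness is effortless, precisely because \Cref{lem:autosigma} absorbs all of the coarse-geometric bookkeeping: every ordinary $G$-map and $G$-homotopy between these spaces is automatically $\sigma$-coarse and a $\sigma$-map, so no explicit estimates on propagation or metric distortion are needed. The only separate point requiring care is that the cylinder $\cY \times [0,1]$ remains an object of the category, which reduces to the routine stability of continuously bounded geometry under taking products with $[0,1]$.
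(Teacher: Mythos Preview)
Your proposal is correct and follows essentially the same route as the paper. The only cosmetic difference is in the existence half: the paper writes down explicit $\Delta$-complex models (ordered $(n{+}1)$-tuples of elements of $G$ for $\Efree G$, the full simplicial complex on the vertex set $G$ for $\Eub G$, filtered e.g.\ by skeleta or by Rips complexes), which are already regular and countable, whereas you reach the same point via the join model plus a barycentric subdivision; the uniqueness half is identical in both and rests entirely on \Cref{lem:autosigma} applied to the maps and the homotopies.
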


\begin{proof}
There are well-known constructions for the classifying spaces as $\Delta$-complexes:
The model $\Efree_\Delta G$ for $\Efree G$ has one $n$-simplex for each ordered $(n+1)$-tuple of elements of $G$ and the $\Eub_\Delta G$ model for $\Eub G$ has one $n$-simplex for each $(n+1)$-element subset of $G$. These simplices are glued together in the obvious way, so in particular $\Eub_\Delta G$ is the geometric realization of the full simplicial complex with vertex set $G$. 
These complexes consist of countably many simplices, because $G$ was assumed to be countable.

The complexes $\Efree_\Delta G$ and $\Eub_\Delta G$ can, for example, be filtered by the sequence of their skeleta  $\Efree_\Delta^n G$ and $\Eub_\Delta^n G$, respectively. Another reasonable choice for the filtration of $\Eub_\Delta G$ is by the Rips complexes $P_n(G)$, which consist of all the simplices of diameter at most $n$.

Now assume that $\cX$ and $\cY$ are two $G$-CW-models for either $\Efree G$ or $\Eub G$ with countably many cells.
They are known to be homotopy equivalent as topological $G$-spaces. 
Let $f\colon\cX\to\cY$ be any $G$-equivariant homotopy equivalence with $G$-equivariant homotopy inverse $g\colon \cY\to \cX$ and $G$-equivariant homotopies $h\colon \cX\times[0,1]\to\cX$, $k\colon\cY\times[0,1]\to\cY$ between $g\circ f$, $f\circ g$ and the respective identities. 
Then all of $f,g,h,k$ are not only continuous, but also $\sigma$-coarse $\sigma$-maps by the previous lemma.
\end{proof}

Now we can define the groups associated to pairs of such spaces, starting with the homological ones.
\begin{defn}\label{defn:HigsonRoeRelativeEquivariantSigma}
Let $\cX=\bigcup_{n\in\N}X_n$ be a $\sigma$-$G$-metric space with closed subspace $\cA=\bigcup_{n\in\N}A_n$. Then we define the groups
\begin{align*}
\K_*^G(\cX,\cA)&\coloneqq \varinjlim_{n\in\N}\K^G_*(X_n,A_n)
\\\Strg_*^G(\cX,\cA)&\coloneqq \varinjlim_{n\in\N}\Strg_*^G(X_n,A_n)
\\\K_*(\Roe[G] (\cX,\cA))&\coloneqq \varinjlim_{n\in\N}\K_*(\Roe[G] (X_n,A_n))
\end{align*}
which canonically fit into a long exact sequence
\begin{equation*}\label{eq:HigsonRoeRelativeEquivariantSigma}
\dots\to \K_{*+1}(\Roe[G] (\cX,\cA))\to \Strg_*^G(\cX,\cA)\to \K_*^G(\cX,\cA)\xrightarrow{\Ind} \K_*(\Roe[G] (\cX,\cA))\to\dots
\end{equation*}
called the \emph{relative equivariant Higson-Roe sequence for $\sigma$-spaces}.
\end{defn}

The dual groups for pairs $(\cY,\cB)$ of $\sigma$-$H$-metric spaces will be defined by evoking $\K$-theory for $\sigma$-\textCstar-algebras (cf.~\cite{PhiRep,phillips}) as described below. We do not summarize the relevant properties of $\K$-theory for $\sigma$-\textCstar-algebras here, because they are exactly the same as in the case of \textCstar-algebras.

Recall that a $\sigma$-\textCstar-algebra is a topological $*$-algebra $C$ whose topology is Hausdorff, is generated by an increasing sequence of \textCstar-seminorms $p_0\leq p_1\leq p_2\leq \dots$ and is complete with respect to these seminorms. Equivalently, it is an inverse limit $C=\varprojlim_{n\in\N}C_n$ of a sequence of \textCstar-algebras $\dots\to C_2\to C_1\to C_0$ (take $C_n\coloneqq C/\ker(p_n)$). If $C$ is even a $\sigma$-$H$-\textCstar-algebra, that is, the inverse limit of a sequence of $H$-\textCstar-algebras $C_n$, then one can define the $\sigma$-\textCstar-algebra $C\rtimes_\mu H\coloneqq\varprojlim_{n\in\N}C_n\rtimes_\mu H$.

\begin{defn}
Given a $\sigma$-$H$-metric space $\cY=\bigcup_{n\in\N}Y_n$ with a subspace $\cB=\bigcup_{n\in\N}B_n$,
we define the $\sigma$-$H$-\textCstar-algebras
\begin{align*}
\Cz(\cY\setminus\cB)&\coloneqq\varprojlim_{n\in\N}\Cz(Y_n\setminus B_n)
\\\sHigComRed(\cY,\cB)&\coloneqq\varprojlim_{n\in\N}\sHigComRed(Y_n,B_n)
\\\sHigCorRed(\cY,\cB)&\coloneqq\varprojlim_{n\in\N}\sHigCorRed(Y_n,B_n)
\end{align*}
and the $\K$-theory group $\K_H^*(\cY,\cB)\coloneqq\K_{-*}(\Cz(\cY\setminus\cB)\rtimes_\mu H)$.
\end{defn}
The meaning of the other two cohomological groups $\K_*(\sHigComRed(\cY,\cB)\rtimes_\mu H)$ and  $\K_*(\sHigCorRed(\cY,\cB)\rtimes_\mu H)$ is now clear, too.

Note that we have a short exact sequence
\[0\to\Cz(\cY,\cB)\rtimes_\mu H\to\sHigComRed(\cY,\cB)\rtimes_\mu H\to \sHigCorRed(\cY,\cB)\rtimes_\mu H\to 0\]
inducing a long exact sequence in $\K$-theory and similarily we get long exact pair-sequences for $\K_H^*(\blank,\blank)$, $\K_*(\sHigComRed(\blank,\blank)\rtimes_\mu H)$ and $\K_*(\sHigCorRed(\blank,\blank)\rtimes_\mu H)$.
Furthermore, we indeed have a $\varprojlim^1$-sequence
\[0\to {\varprojlim_{n\in\N}}^1\,\K_H^{*-1}(Y_n,B_n)\to\K_H^*(\cY,\cB) \to\varprojlim_{n\in\N}\K_H^*(Y_n,B_n)\to 0\]
and analogously for $\K_*(\sHigComRed(\cY,\cB)\rtimes_\mu H)$ and $\K_*(\sHigCorRed(\cY,\cB)\rtimes_\mu H)$.

\Cref{lem:GCWmodelsforEG} implies that the homotopy types of the $\sigma$-$H$-\textCstar-algebras $\Cz(\Eub G)$ and $\Cz(\Efree G)$ are independent of the choice of models for $\Efree G,\Eub G$ and their filtration. Therefore, their $\K$-theories are, up to canonical isomorphism, also independent of it. Moreover, $\sHigCorRed(\Efree G)\cong\sHigCorRed(G)\cong\sHigCorRed(\Eub G)$ are in fact \textCstar-algebras, because they are inverse limits over a system of $*$-isomorphisms.

Now, due to the naturality stated in \Cref{thm:slantproductpropertiessummarized}, the slant product immediately passes over to the limits and colimits and we get a slant product 
\begin{align*}
\Strg^{G\times H}_p&((\cX,\cA)\times(\cY,\cB))\otimes\K_{-q}(\sHigComRed (\cY,\cB)\rtimes_{\mu} H)\to
\\&\to\varinjlim_{n\in\N}\Strg^{G\times H}_p((X_n,A_n)\times(Y_n,B_n))\otimes\varprojlim_{n\in\N}\K_{-q}(\sHigComRed (Y_n,B_n)\rtimes_{\mu} H)\to
\\&\to\varinjlim_{n\in\N}\K_{p-q}(\Roe[G](X_n,A_n))
=\K_{p-q}(\Roe[G](\cX,\cA))
\end{align*}
where $ (\cX,\cA)\times(\cY,\cB)$ denotes, of course, the pair of $\sigma$-$(G\times H)$-metric spaces given by the filtration $(X_n\times Y_n,X_n\times B_n\cup A_n\times Y_n)$.
Completely analogously, one also obtains the $\sigma$-versions
\begin{alignat*}{2}
\K_p(\Roe[G\times H]((\cX,\cA)\times(\cY,\cB))) &\otimes \K_{1-q}(\sHigCorRed (\cY,\cB)\rtimes_{\mu} H) &&\to \K_{p-q}(\Roe[G] (\cX,\cA))\,,
\\\K^{G\times H}_p((\cX,\cA)\times(\cY,\cB)) &\otimes \K^{q}_H(\cY,\cB)&&\to \K^G_{p-q}(\cX,\cA)
\end{alignat*}
of the other two slant products.
The properties of the slant products summarized in \Cref{thm:slantproductpropertiessummarized} readily pass over to these $\sigma$-versions.

\section{Numerical invariants on the analytic structure group}
\label{sec:numerical}
\begin{defn}
The pairing
\[\langle\blank,\blank\rangle\colon \Strg^H_p(\cY,\cB) \otimes \K_{-q}(\sHigComRed (\cY,\cB)\rtimes_\mu H)\to \K_{p-q}(\Roe\{*\})\]
is defined as the special case of the equivariant slant product where $(\cX,\cA)=(\{*\},\emptyset)$ is a single point equipped with the action of the trivial group.
\end{defn}
In particular, given a $\sigma$-$G$-metric space \(\cX\), we obtain a numerical pairing involving the analytic structure group and the stable Higson corona of the following type,
\begin{equation}
  \langle\blank,\blank\rangle\colon \Strg^G_n(\cX) \otimes \K_{-n}(\sHigComRed (\cX)\rtimes_\mu G)\to \Z. \label{eq:numerical_pairing_structure_group}
\end{equation}

Given a \(\sigma\)-\(G\)-compact \(\sigma\)-\(G\)-metric space $\cX$, we always obtain a pair of the form $(\Eub G,\cX)$. 
This has to be understood as follows. Any classifying map $f\colon \cX\to\Eub G$ of the proper action into a $CW$-model for $\Eub G$ with countably many cells as in \Cref{lem:GCWmodelsforEG} is a continuous $G$-equivariant $\sigma$-coarse $\sigma$-map by \Cref{lem:autosigma}. By redefining $\Eub G$ as the mapping cylinder $\Eub G\coloneqq\operatorname{Zyl}(f)$, which is now a $\sigma$-$G$-metric space, we retrieve $\cX$ as a subspace of $\Eub G$.

Note also that since classifying maps are unique up to equivariant homotopy, the same argument shows that the homotopy is also a $\sigma$-map, but it might not be a homotopy of inclusions.

\begin{lem} \label{lem:canonical_relative_iso_dual}
  Let \(\cX\) be \(\sigma\)-\(G\)-compact.
  Then the canonical map
  \[\K^{-*}_G(\Eub G,\cX)\to \K_*(\sHigComRed(\Eub G,\cX)\rtimes_\mu G)\]
  is an isomorphism.
\end{lem}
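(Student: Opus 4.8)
The plan is to reduce the claim to the vanishing of the relative reduced stable Higson corona and then read off the isomorphism from the dual Higson--Roe long exact sequence. Concretely, I would invoke the (equivariant, relative, $\sigma$) short exact sequence
\[0\to\Cz(\Eub G\setminus\cX,\Kom)\rtimes_\mu G\to\sHigComRed(\Eub G,\cX)\rtimes_\mu G\to\sHigCorRed(\Eub G,\cX)\rtimes_\mu G\to 0\]
together with its induced long exact $\K$-theory sequence. Since $\Cz(\Eub G\setminus\cX,\Kom)\rtimes_\mu G\cong(\Cz(\Eub G\setminus\cX)\rtimes_\mu G)\otimes\Kom$, the $\K$-theory of the ideal is $\K_*(\Cz(\Eub G\setminus\cX)\rtimes_\mu G)=\K^{-*}_G(\Eub G,\cX)$, and the map induced by the inclusion of the ideal is exactly the canonical map of the statement. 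Hence it suffices to prove $\K_*(\sHigCorRed(\Eub G,\cX)\rtimes_\mu G)=0$; the canonical map is then trapped between vanishing terms in the long exact sequence and is forced to be an isomorphism in every degree.

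Second, I would establish the stronger fact that $\sHigCorRed(\Eub G,\cX)=0$ already at the level of $\sigma$-$\Cstar$-algebras. By definition this $\sigma$-$\Cstar$-algebra equals $\varprojlim_{n}\ker\bigl(\sHigCorRed((\Eub G)_n)\to\sHigCorRed(X_n)\bigr)$, where the maps are restriction along the level inclusions $\iota_n\colon X_n\hookrightarrow(\Eub G)_n$. The geometric heart of the argument is that each $\iota_n$ is a coarse equivalence: because $\cX$ is $\sigma$-$G$-compact, every $X_n$ is $G$-compact, and the corresponding level $(\Eub G)_n$ of the mapping cylinder $\operatorname{Zyl}(f)$ is $G$-finite and hence also $G$-compact; a $G$-equivariant map between proper $G$-cocompact metric spaces is automatically a coarse equivalence, as both are coarsely equivalent to $G$ via orbit maps in a compatible way. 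Coarse invariance of the reduced stable Higson corona then makes each restriction map $\iota_n^*$ an isomorphism, so each kernel is trivial and the inverse limit vanishes.

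Finally, since $0\rtimes_\mu G=0$ for any (exact) crossed product functor, I conclude $\sHigCorRed(\Eub G,\cX)\rtimes_\mu G=0$, its $\K$-theory vanishes, and the reduction of the first paragraph finishes the proof. The main obstacle to pin down rigorously is the claim that $\cX\hookrightarrow\Eub G$ is level-wise a coarse equivalence; this is precisely where the hypothesis of $\sigma$-$G$-compactness is indispensable, guaranteeing $G$-cocompactness of the $X_n$ so that the $G$-equivariant inclusions into the mapping-cylinder levels become coarse equivalences and hence induce isomorphisms on the coarse-invariant reduced stable Higson coronas. The remaining bookkeeping --- exactness of the long sequence, the identification of the ideal's $\K$-theory with $\K^{-*}_G(\Eub G,\cX)$, and triviality of the crossed product of the zero algebra --- is routine.
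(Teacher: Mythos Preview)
Your proposal is correct and follows essentially the same approach as the paper: both arguments hinge on the observation that the level-wise inclusions $X_n \hookrightarrow Y_n$ are coarse equivalences (by $G$-compactness of both sides), whence each $\sHigCorRed(Y_n,X_n)=0$ and thus $\sHigCorRed(\Eub G,\cX)=0$. The only cosmetic difference is that the paper concludes directly at the algebra level that $\sHigComRed(\Eub G,\cX)=\Cz(\Eub G\setminus\cX,\Kom)$ and then applies $\rtimes_\mu G$, whereas you pass through the $\K$-theory long exact sequence; both routes are immediate once the corona vanishes.
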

\begin{proof}
Let $\cX=\bigcup_{n\in\N}X_n$ and $\Eub G=\bigcup_{n\in\N}Y_n$ be the filtrations of these $\sigma$-spaces. As explained above, we may see $\cX$ as a subspace of $\Eub G$ and then $X_n=\cX\cap Y_n$. Since all $X_n,Y_n$ are $G$-compact, the inclusions $X_n\subset Y_n$ are coarse equivalences. Therefore, $\sHigCorRed(Y_n,X_n)=0$ and $\sHigComRed(Y_n,X_n)=\Cz(Y_n\setminus X_n)$ for each $n\in\N$, and thus \(\sHigCorRed(\Eub G, \cX) = 0\) and \(\sHigComRed(\Eub G, \cX) = \Cz(\Eub G \setminus \cX)\).
  Applying the crossed product functor \(\rtimes_\mu G\) to the latter equality immediately implies the claim. 
\end{proof}

Combining this with the boundary map on the K-theory of the stable Higson compactification associated to the pair \((\Eub G, \mathcal{X})\), we obtain a canonical map
\begin{equation}
  \delta_{\mathcal{X}} \colon \K_{\ast+1}(\sHigComRed(\mathcal{X}) \rtimes_\mu G) \xrightarrow{\partial} \K_*(\sHigComRed(\Eub G,\mathcal{X})\rtimes_\mu G) \cong \K_G^{-\ast}(\Eub G, \mathcal{X}). \label{eq:HigCom_Ktheory_comparison}
  \end{equation}
In order to obtain numerical invariants on the analytic structure groups from \labelcref{eq:numerical_pairing_structure_group}, it is thus relevant to answer the question if a given topological K-theory class \(\theta \in \K_G^{-\ast}(\Eub G, \mathcal{X})\) lifts to an element \(\tilde{\theta} \in \K_{\ast+1}(\sHigComRed(\mathcal{X}) \rtimes_\mu G)\) along \(\delta_\cX\).
We have the following abstract criterion.
\begin{prop}
  Let \(\theta \in \K_G^{n+1}(\Eub G, \mathcal{X})\).
  Then \(\theta\) lies in the image of \(\delta_\cX\) if and only if its image under the natural map \(\K_G^{n+1}(\Eub G, \mathcal{X}) \to \K_{G}^{n+1}(\Eub G)\) lies in the image of the equivariant co-assembly map \(\K_{-n}(\sHigCorRed(\Eub G) \rtimes_\mu G) \to \K^{n+1}_G(\Eub G)\).
  
\end{prop}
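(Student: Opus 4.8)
The plan is to run a diagram chase in the equivariant, relative, \(\sigma\)-version of the \(3\times 3\) diagram \eqref{eq:RelativeDualHigsonRoeDiagram}, specialized to the pair \((\Eub G, \cX)\) with \(H = G\). For \(Z\) among \(\cX\), \(\Eub G\) and the pair \((\Eub G, \cX)\), write \(v_Z\) for the natural map \(\K_G^{\ast}(Z) \to \K_{-\ast}(\sHigComRed(Z) \rtimes_\mu G)\) occurring as a vertical arrow of \eqref{eq:RelativeDualHigsonRoeDiagram}, that is, as the map \(\K^\ast_G \to \K_{-\ast}(\sHigComRed(\blank)\rtimes_\mu G)\) in the coarse co-assembly sequence. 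By \Cref{lem:canonical_relative_iso_dual} the relative map \(v_{(\Eub G,\cX)}\) is an isomorphism, and by the very definition \eqref{eq:HigCom_Ktheory_comparison} of \(\delta_\cX\) we have \(\delta_\cX = v_{(\Eub G,\cX)}^{-1}\circ\partial\), where \(\partial\) is the boundary map of the pair sequence for \(\sHigComRed(\blank)\rtimes_\mu G\).

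Since \(v_{(\Eub G,\cX)}\) is an isomorphism, the first step is to observe that \(\theta\) lies in the image of \(\delta_\cX\) if and only if \(\theta' \coloneqq v_{(\Eub G,\cX)}(\theta) \in \K_{-n-1}(\sHigComRed(\Eub G,\cX)\rtimes_\mu G)\) lies in the image of
\[\partial\colon \K_{-n}(\sHigComRed(\cX)\rtimes_\mu G)\to \K_{-n-1}(\sHigComRed(\Eub G,\cX)\rtimes_\mu G).\]
By exactness of the pair sequence for the stable Higson compactification, \(\im(\partial) = \ker(j)\), where \(j\colon \K_{-n-1}(\sHigComRed(\Eub G,\cX)\rtimes_\mu G) \to \K_{-n-1}(\sHigComRed(\Eub G)\rtimes_\mu G)\) is induced by the inclusion of the relative into the absolute algebra. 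Thus I would reduce the claim to deciding whether \(j(\theta') = 0\).

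Next I would invoke commutativity of the top-left square of \eqref{eq:RelativeDualHigsonRoeDiagram} (this square is not bounded only by boundary maps, so it commutes on the nose), which is naturality of the maps \(v_{(\blank)}\) under the morphism of pairs \((\Eub G,\emptyset)\to(\Eub G,\cX)\). It yields \(j(\theta') = j\bigl(v_{(\Eub G,\cX)}(\theta)\bigr) = v_{\Eub G}(\bar\theta)\), where \(\bar\theta \in \K_G^{n+1}(\Eub G)\) is the image of \(\theta\) under the natural map \(\K_G^{n+1}(\Eub G,\cX)\to\K_G^{n+1}(\Eub G)\). Hence \(\theta'\in\ker(j)\) exactly when \(v_{\Eub G}(\bar\theta)=0\). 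Finally, exactness of the coarse co-assembly sequence for \(\Eub G\) at the term \(\K_G^{n+1}(\Eub G)\),
\[\K_{-n}(\sHigCorRed(\Eub G)\rtimes_\mu G)\xrightarrow{\mu_G^\ast}\K_G^{n+1}(\Eub G)\xrightarrow{v_{\Eub G}}\K_{-n-1}(\sHigComRed(\Eub G)\rtimes_\mu G),\]
identifies \(\ker(v_{\Eub G})\) with \(\im(\mu_G^\ast)\); here \(\mu_G^\ast\) is precisely the equivariant co-assembly map in degree \(n+1\). Chaining the equivalences gives \(\theta\in\im(\delta_\cX)\) if and only if \(\bar\theta\in\im(\mu_G^\ast)\), as claimed.

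I do not expect a serious obstacle: the argument is purely homological once the relevant exact sequences are in place. The one point demanding care is the bookkeeping that glues the compactification pair sequence to the co-assembly sequence of \(\Eub G\) along the single commuting square above, with the correct degrees and with \(v_{(\Eub G,\cX)}\) being invertible; this is exactly the structure encoded by the equivariant \(\sigma\)-version of \eqref{eq:RelativeDualHigsonRoeDiagram}, whose existence and (sign-free) commutativity for the square we use have already been recorded.
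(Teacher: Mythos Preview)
Your argument is correct and is precisely the diagram chase that the paper leaves implicit: the paper simply displays the commutative diagram with exact rows and columns (the co-assembly sequence for each of $\cX$, $(\Eub G,\cX)$, $\Eub G$ as rows, the pair sequences as columns) and asserts that the statement follows, whereas you spell out the chase through the upper-right square and the exactness at the two relevant nodes. There is no substantive difference in approach.
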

\begin{proof}
  The statement follows from the diagram %
  \[
    \begin{tikzcd}
      \K_{-n}(\sHigCorRed(\Eub G) \rtimes_\mu G) \rar  & \K^{n+1}_G(\Eub G) \rar  & \K_{-n-1}(\sHigComRed(\Eub G) \rtimes_\mu G) \\
      0 \rar \uar & \K_G^{n+1}(\Eub G, \mathcal{X}) \rar["\cong"] \uar & \K_{-n-1}(\sHigComRed(\Eub G, \mathcal{X}) \rtimes_\mu G) \uar \\
      \K_{1-n}(\sHigCorRed(\mathcal{X}) \rtimes_\mu G) \rar \uar & \K_G^{n}(\mathcal{X}) \rar \uar & \K_{-n}(\sHigComRed(\mathcal{X}) \rtimes_\mu G)  \ar[ul, "\delta_\cX", dotted] \uar
    \end{tikzcd}
  \]
  with exact rows and columns.
\end{proof}

In particular, if the equivariant co-assembly map is surjective, then all relative K-theory classes lift to invariants on the structure group.
If we know that \(G\) admits a \(\gamma\)-element, we know even more.

\begin{prop} \label{prop:dual_boundary_surj}
  Let \(\cX\) be \(\sigma\)-\(G\)-compact, \(\rtimes_{\mu}\) be an exact crossed product functor and assume that \(G\) admits a \(G\)-finite \(\Eub G\) and a \(\gamma\)-element.
  Then the map \(\delta_\cX\) of \labelcref{eq:HigCom_Ktheory_comparison} is split-surjective.
\end{prop}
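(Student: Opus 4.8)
The plan is to identify \(\delta_\cX\) with the connecting map of a pair sequence and to split that map using the coarse co-assembly map of \(\Eub G\), whose split surjectivity is exactly what the \(\gamma\)-element provides. By \labelcref{eq:HigCom_Ktheory_comparison}, \(\delta_\cX\) is the boundary map \(\partial\) of the short exact sequence
\[0\to\sHigComRed(\Eub G,\cX)\rtimes_\mu G\to\sHigComRed(\Eub G)\rtimes_\mu G\xrightarrow{\rho}\sHigComRed(\cX)\rtimes_\mu G\to 0\]
(which is exact because \(\rtimes_\mu\) is exact), followed by the isomorphism of \cref{lem:canonical_relative_iso_dual}. Since the latter is an isomorphism, I only need \(\partial\) to be split surjective, and I would run the whole argument inside the equivariant \(\sigma\)-analogue of Diagram \eqref{eq:RelativeDualHigsonRoeDiagram} for the pair \((\Eub G,\cX)\), whose rows are the pair sequences and whose columns are the coarse co-assembly sequences of \(\cX\), \(\Eub G\) and \((\Eub G,\cX)\), all crossed with \(\rtimes_\mu G\).

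The only genuinely non-formal input — and the main obstacle — is that the equivariant coarse co-assembly map \(\mu^\ast_G\colon\K_{1-\ast}(\sHigCorRed(\Eub G)\rtimes_\mu G)\to\K^\ast_G(\Eub G)\) is split surjective. For a \(G\)-finite \(\Eub G\) the existence of a \(\gamma\)-element is equivalent to a dual–Dirac element, and the dual–Dirac element furnishes a section of the Emerson–Meyer co-assembly map \(\mu^\ast_{\EM}\). Via the comparison triangle relating \(\mu^\ast_{\EM}\), \(\mu^{\BC}_\ast\) and \(\mu^\ast_G\), a section \(\sigma\) of \(\mu^\ast_{\EM}\) yields the section \(\mu^{\BC}_\ast\circ\sigma\) of \(\mu^\ast_G\). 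I expect this to be the crux; everything else is diagram chasing.

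Granting this, surjectivity of \(\mu^\ast_G\) makes the next map \(\K^\ast_G(\Eub G)\to\K_\ast(\sHigComRed(\Eub G)\rtimes_\mu G)\) in the co-assembly column of \(\Eub G\) vanish. Using the commuting square of the \(3\times 3\) diagram together with \cref{lem:canonical_relative_iso_dual} (whose isomorphism is precisely the pair-column map, because \(\sHigCorRed(\Eub G,\cX)=0\)), this forces the map \(\K_\ast(\sHigComRed(\Eub G,\cX)\rtimes_\mu G)\to\K_\ast(\sHigComRed(\Eub G)\rtimes_\mu G)\) to be zero. Hence the middle row collapses to a short exact sequence
\[0\to\K_\ast(\sHigComRed(\Eub G)\rtimes_\mu G)\xrightarrow{\rho_\ast}\K_\ast(\sHigComRed(\cX)\rtimes_\mu G)\xrightarrow{\partial}\K_\ast(\sHigComRed(\Eub G,\cX)\rtimes_\mu G)\to 0,\]
so it remains to split \(\rho_\ast\) (equivalently, to exhibit a retraction of \(\rho_\ast\)).

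For the splitting I use that \(\cX\hookrightarrow\Eub G\) is a \(G\)-equivariant coarse equivalence (both spaces are \(G\)-compact, as in the proof of \cref{lem:canonical_relative_iso_dual}) and that the reduced stable Higson corona is a coarse invariant; restriction thus induces an isomorphism \(\phi\colon\K_\ast(\sHigCorRed(\Eub G)\rtimes_\mu G)\xrightarrow{\cong}\K_\ast(\sHigCorRed(\cX)\rtimes_\mu G)\). In the co-assembly column of \(\Eub G\) the vanishing above makes \(d_{\Eub G}\colon\K_\ast(\sHigComRed(\Eub G)\rtimes_\mu G)\to\K_\ast(\sHigCorRed(\Eub G)\rtimes_\mu G)\) a split injection onto \(\ker\mu^\ast_G\), with retraction \(r\) induced by \(\mathrm{id}-s\circ\mu^\ast_G\) for a section \(s\) of \(\mu^\ast_G\). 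Naturality of the co-assembly columns under the inclusion gives \(d_\cX\circ\rho_\ast=\phi\circ d_{\Eub G}\), where \(d_\cX\) is the analogous column map for \(\cX\); a one-line check then shows that \(r\circ\phi^{-1}\circ d_\cX\) is a retraction of \(\rho_\ast\). Therefore the displayed sequence splits and \(\partial\), hence \(\delta_\cX\), is split surjective. Note that only split surjectivity (not bijectivity) of \(\mu^\ast_G\) enters, matching the fact that a bare \(\gamma\)-element need not render co-assembly an isomorphism.
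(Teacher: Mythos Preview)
Your proof is correct and follows essentially the same route as the paper. The paper cites \cite[Corollary~5.3]{EngelWulffZeidler} for the split-surjectivity of the equivariant coarse co-assembly map on \(\Eub G\) (which you derive from the Emerson--Meyer comparison), deduces that \(d_{\Eub G}\colon\K_\ast(\sHigComRed(\Eub G)\rtimes_\mu G)\to\K_\ast(\sHigCorRed(\Eub G)\rtimes_\mu G)\) is split-injective with a chosen retraction \(r\), and then uses the same commutative square with the coarse-invariance isomorphism \(\phi\) to produce a retraction of \(\rho_\ast\); your intermediate step exhibiting the collapsed short exact sequence via the \(3\times 3\) diagram is a more explicit justification of what the paper subsumes in the phrase ``by exactness it is enough to show split-injectivity of \(\rho_\ast\)''.
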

\begin{proof}
By exactness of the K-theory sequence of the stable Higson compactification associated to the pair \((\Eub G, \cX)\), it is enough to show that the map \(\K_{\ast+1}(\sHigComRed(\Eub G) \rtimes_\mu G) \to \K_{\ast+1}(\sHigComRed \cX \rtimes_\mu G)\) is split-injective.
Indeed, by \cite[Corollary~5.3]{EngelWulffZeidler},  the equivariant coarse co-assembly map \(\K_{\ast}(\sHigCorRed(\Eub G) \rtimes_\mu G) \xrightarrow{\partial} \K^{1-\ast}_G(\Eub G)\) is split-surjective.
Then, in turn by exactness of the dual Higson--Roe sequence, the map \(\K_\ast(\sHigComRed(\Eub G) \rtimes_\mu G) \to \K_\ast(\sHigCorRed(\Eub G) \rtimes_\mu G)\) is split-injective.
Then choose a retraction \(r \colon \K_\ast(\sHigCorRed(\Eub G) \rtimes_\mu G) \to \K_{\ast}(\sHigComRed(\Eub G) \rtimes_\mu G)\).
Now the following commutative diagram
\[
  \begin{tikzcd}
    \K_\ast(\sHigComRed(\Eub G) \rtimes_\mu G) \ar[r] \ar[d,tail] & \K_\ast(\sHigComRed(\cX) \rtimes_\mu G) \ar[d] \ar[l, bend left=10, dashed] \\
    \K_\ast(\sHigCorRed(\Eub G) \rtimes_\mu G) \ar[r, "\cong"] \ar[u, bend right, "r"'] & \K_\ast(\sHigCorRed(\cX) \rtimes_\mu G)
  \end{tikzcd}
  \]
  shows that the top horizontal map also admits a retraction. Thus it is split-injective as required.
\end{proof}

A parallel discussion applies to the analytic structure group itself.

\begin{lem}\label{lem:canonical_relative_iso}
  Let \(\cX\) be \(\sigma\)-\(G\)-compact.
  Then the canonical map
 \[
\Strg^G_\ast(\Eub G, \cX) \to \K_\ast^G(\Eub G, \cX)
  \]
  is an isomorphism.
\end{lem}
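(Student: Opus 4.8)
The plan is to run the argument dual to the proof of \Cref{lem:canonical_relative_iso_dual}, reducing the claim to the vanishing of the relative equivariant Roe K-theory. Concretely, the canonical map in question is precisely the natural transformation $\Strg^G_\ast \to \K^G_\ast$ appearing in the relative equivariant Higson--Roe sequence for $\sigma$-spaces from \Cref{defn:HigsonRoeRelativeEquivariantSigma}, applied to the pair $(\Eub G, \cX)$. By exactness of that sequence, this map is an isomorphism as soon as $\K_\ast(\Roe[G](\Eub G, \cX)) = 0$. As in \Cref{lem:canonical_relative_iso_dual}, I realize $\Eub G$ via the mapping cylinder model of \Cref{lem:GCWmodelsforEG} so that it becomes a $\sigma$-$G$-compact $\sigma$-$G$-metric space $\Eub G = \bigcup_n Y_n$ with $G$-finite $Y_n$, and I write $\cX = \bigcup_n X_n$ with $X_n = \cX \cap Y_n$. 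Since all these K-theory groups are defined as direct limits and direct limits are exact, it suffices to show $\K_\ast(\Roe[G](Y_n, X_n)) = 0$ for each $n$.

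The key geometric input is that both $X_n$ and $Y_n$ are $G$-compact, so the $G$-equivariant inclusion $X_n \hookrightarrow Y_n$ is a coarse equivalence; this is the same fact that drives \Cref{lem:canonical_relative_iso_dual}, and it holds because any two proper $G$-compact proper $G$-spaces are $G$-equivariantly coarsely equivalent. First I would invoke the identification $\K_\ast(\Roe[G](X_n)) \cong \K_\ast(\Roe[G](X_n \subset Y_n))$ recorded earlier (induced by adjoining an isometry covering the inclusion), under which the inclusion map $\K_\ast(\Roe[G](X_n)) \to \K_\ast(\Roe[G](Y_n))$ from the third row of diagram \eqref{eq:RelativeHigsonRoeDiagram} is implemented by the inclusion of ideals $\Roe[G](X_n \subset Y_n) \subset \Roe[G](Y_n)$. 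By coarse invariance of the K-theory of the equivariant Roe algebra, the coarse equivalence $X_n \hookrightarrow Y_n$ makes this map an isomorphism, and the long exact sequence of the pair then forces $\K_\ast(\Roe[G](Y_n, X_n)) = 0$, as desired.

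Feeding this vanishing into the relative Higson--Roe sequence for $(Y_n, X_n)$ shows that $\Strg^G_\ast(Y_n, X_n) \to \K^G_\ast(Y_n, X_n)$ is an isomorphism for every $n$, and passing to the colimit over $n$ yields the claim for $(\Eub G, \cX)$. The only genuinely non-formal point is the coarse-equivalence claim for the $G$-equivariant inclusion of $G$-compact $G$-spaces; everything else is a direct application of coarse invariance of $\K_\ast(\Roe[G](\blank))$, the relative Higson--Roe sequence, and exactness of direct limits. Thus this lemma is entirely parallel to \Cref{lem:canonical_relative_iso_dual}, with the role of the stable Higson corona replaced by the equivariant Roe algebra.
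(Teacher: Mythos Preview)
Your argument is correct and follows the same overall strategy as the paper: reduce to showing that \(\K_\ast(\Roe[G](Y_n,X_n))=0\) for each \(n\) using that the inclusion \(X_n\subset Y_n\) is a coarse equivalence of \(G\)-compact spaces, and then feed this into the relative Higson--Roe sequence and pass to the colimit.

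The only difference is in how the vanishing is obtained. You invoke coarse invariance of \(\K_\ast(\Roe[G](\blank))\) to see that the inclusion-induced map \(\K_\ast(\Roe[G](X_n))\to\K_\ast(\Roe[G](Y_n))\) is an isomorphism and then deduce \(\K_\ast(\Roe[G](Y_n,X_n))=0\) from the long exact sequence of the pair. The paper is more direct: since \(Y_n\) lies in a bounded neighborhood of \(X_n\), every finite-propagation operator on \(Y_n\) is already supported near \(X_n\times X_n\), so the ideal \(\Roe(X_n\subset Y_n)\) equals \(\Roe(Y_n)\) as a \textCstar-algebra, and likewise \(\Roe[G](X_n\subset Y_n)=\Roe[G](Y_n)\); hence \(\Roe[G](Y_n,X_n)=0\) on the nose, not just in \(\K\)-theory. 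Your route is perfectly valid, just slightly less elementary since it treats coarse invariance as a black box rather than reading it off from the definition of the relative Roe algebra.
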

\begin{proof}
As in the proof of \Cref{lem:canonical_relative_iso_dual}, let $\cX=\bigcup_{n\in\N}X_n$ and $\Eub G=\bigcup_{n\in\N}Y_n$ be the filtrations, so that the inclusions $X_n=\cX\cap Y_n\subset Y_n$ are coarse equivalences. Therefore, \(\Roe(X_n \subset Y_y) = \Roe(Y_n)\) and \(\Roe(Y_n,X_n) = 0\).
  In particular, we also have \(\Roe[G](X_n \subset Y_n) = \Roe[G](Y_n)\) and so \(\Roe[G](Y_n,X_n) = 0\).
  Thus the claim follows from exactness of the equivariant relative Higson--Roe sequence for $\sigma$-spaces defined in \Cref{defn:HigsonRoeRelativeEquivariantSigma} by taking direct limits.
\end{proof}

\begin{prop}\label{prop:boundary_inj}
Let \(\cX\) be \(\sigma\)-\(G\)-compact and assume that \(G\) admits a \(\gamma\)-element.
  Then the boundary map
  \[
    \partial_\cX \colon \K_{\ast+1}^G(\Eub G, \cX) \cong \Strg^G_{\ast+1}(\Eub G, \cX) \xrightarrow{\partial} \Strg_{\ast}^G(\cX)
    \]
    is split-injective, where the first map is the isomorphism of \cref{lem:canonical_relative_iso}.
\end{prop}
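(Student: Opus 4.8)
The plan is to deduce split-injectivity of $\partial_\cX$ from the long exact sequence of the pair $(\Eub G,\cX)$ for the structure group, combined with the naturality of the Higson--Roe sequence and the split-injectivity of the reduced Baum--Connes assembly map that is furnished by the $\gamma$-element. The first step is to record the pair sequence
\[
\dots \to \Strg^G_{\ast+1}(\Eub G) \xrightarrow{\iota_*} \Strg^G_{\ast+1}(\Eub G, \cX) \xrightarrow{\partial_\cX} \Strg^G_\ast(\cX) \xrightarrow{\jmath_*} \Strg^G_\ast(\Eub G) \to \dots,
\]
where $\jmath_*$ is induced by the inclusion $\cX \hookrightarrow \Eub G$ and $\partial_\cX$ is the map of the statement after precomposition with the isomorphism of \cref{lem:canonical_relative_iso}. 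A purely homological reduction then applies: if $\jmath_*$ is split-surjective in every degree, with a chosen section $\sigma$, then $\iota_* = 0$, so $\partial_\cX$ is injective with image $\ker\jmath_*$, and $s \coloneqq \partial_\cX^{-1}\circ(\id - \sigma\jmath_*)$ is a well-defined retraction of $\partial_\cX$. Hence it suffices to prove that $\jmath_*$ is split-surjective.

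To establish this, I would invoke naturality of the equivariant Higson--Roe sequence under the $G$-equivariant continuous $\sigma$-coarse inclusion $\cX\hookrightarrow\Eub G$, yielding the commuting square
\[
\xymatrix{
\K_{\ast+1}(\Roe[G](\cX)) \ar[r]^-{\partial} \ar[d]_-{\cong} & \Strg^G_\ast(\cX) \ar[d]^-{\jmath_*} \\
\K_{\ast+1}(\Roe[G](\Eub G)) \ar[r]^-{\partial} & \Strg^G_\ast(\Eub G)
}
\]
whose horizontal arrows are the connecting maps of the respective Higson--Roe sequences. The left vertical map is an isomorphism: exactly as in the proof of \cref{lem:canonical_relative_iso}, $\sigma$-$G$-compactness makes each inclusion $X_n \subset Y_n$ a coarse equivalence, so both groups are canonically identified with $\K_{\ast+1}(\CstarRed G)$ in a compatible way.

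It then remains to show that the bottom connecting map is split-surjective. Under the canonical identification $\K_\ast(\Roe[G](\Eub G)) \cong \K_\ast(\CstarRed G)$, the index map $\Ind\colon \K^G_\ast(\Eub G) \to \K_\ast(\Roe[G](\Eub G))$ is the reduced Baum--Connes assembly map, which is split-injective because $G$ admits a $\gamma$-element. By exactness of the Higson--Roe sequence for $\Eub G$, split-injectivity of $\Ind$ forces the map $\Strg^G_\ast(\Eub G)\to\K^G_\ast(\Eub G)$ to vanish, and a chosen retraction of $\Ind$ produces a section of the connecting map $\partial\colon \K_{\ast+1}(\Roe[G](\Eub G))\to\Strg^G_\ast(\Eub G)$, so the latter is split-surjective. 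Since $\jmath_*\circ\partial = \partial\circ(\text{iso})$ by the square, the composite $\jmath_*\circ\partial$ is split-surjective, whence $\jmath_*$ is split-surjective in every degree, and by the reduction of the first paragraph $\partial_\cX$ is split-injective.

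The only genuinely substantive point, and thus the expected main obstacle, is the identification in the last paragraph: one must check that the index map $\Ind$ in the Higson--Roe sequence corresponds, under $\K_\ast(\Roe[G](\Eub G))\cong\K_\ast(\CstarRed G)$, to the reduced assembly map, and that the $\gamma$-element yields split-injectivity at the level of the colimit group $\K^G_\ast(\Eub G)$ (as opposed to a single $G$-compact stage). This is why the hypothesis is a $\gamma$-element and not the stronger $G$-finiteness of $\Eub G$ needed on the dual side in \cref{prop:dual_boundary_surj}; everything else is standard bookkeeping with the long exact sequences and the colimits defining the $\sigma$-groups.
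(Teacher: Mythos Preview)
Your proposal is correct and follows essentially the same route as the paper: reduce to showing that $\jmath_*\colon \Strg^G_\ast(\cX)\to\Strg^G_\ast(\Eub G)$ is split-surjective, use the naturality square with the Higson--Roe boundary maps and the isomorphism $\K_\ast(\Roe[G](\cX))\cong\K_\ast(\Roe[G](\Eub G))$, and invoke the $\gamma$-element to make $\partial\colon\K_{\ast+1}(\Roe[G](\Eub G))\to\Strg^G_\ast(\Eub G)$ split-surjective. Your write-up is more explicit about the homological bookkeeping (the formula for the retraction, and why $\iota_*=0$), but the argument is the same.
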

\begin{proof}
  By exactness of the structure group exact sequence for the pair \((\Eub G, \cX)\), it is enough to show that the map \(\Strg_\ast^G(\cX) \to \Strg^G_\ast(\Eub G)\) is split-surjective.
  By the existence of a \(\gamma\)-element, the assembly map \(\K_\ast^G(\Eub G) \to \K_\ast(\Roe[G](\Eub G))\) is split-injective and the boundary map \(\partial \colon \K_{\ast+1}(\Roe[G](\Eub G)) \to \Strg^G_\ast(\Eub G)\) split-surjective.
  Choose a section \(s \colon \Strg^G_\ast(\Eub G) \to \K_{\ast+1}(\Roe[G](\Eub G))\).
  Then the following diagram 
  \[
    \begin{tikzcd}
      \K_{\ast+1}(\Roe[G](\cX)) \rar["\cong"] \dar["\partial"] & \K_{\ast+1}(\Roe[G](\Eub G)) \dar[two heads, "\partial"] \\
      \Strg^G_\ast(\cX) \rar & \Strg^G_\ast(\Eub G) \uar["s", bend left] \lar[bend right=20, dashed]
    \end{tikzcd}
  \]
  shows that the bottom horizontal map also admits a section, as desired.
\end{proof}

\begin{rem}
  If, in addition, \(G\) is exact and satisfies the Baum--Connes conjecture, then the maps in \cref{prop:dual_boundary_surj,prop:boundary_inj} are both isomorphisms (for \(\mu = \mathrm{red}\), compare \cite[Corollary~5.4]{EngelWulffZeidler}).
\end{rem}

\begin{thm} Let \(\mathcal{X}\) be a \(\sigma\)-\(G\)-compact \(\sigma\)-\(G\)-metric space.
  Then we have a diagram commuting up to a sign $(-1)^n$ and relating the pairing between the structure group and the K-theory of the stable Higson corona as follows
  \begin{equation*}
    \begin{tikzcd}
      \Strg_n^G(\mathcal{X}) \arrow[r, phantom, "\otimes" description]  &
      \K_{-n}(\sHigComRed(\mathcal{X}) \rtimes_\mu G) \dar["\delta_{\mathcal{X}}"] \rar 
      & \Z \arrow[d, equal] \\
       \K_{n+1}^G(\Eub G, \mathcal{X}) \uar["\partial_{\mathcal{X}}"]\arrow[r, phantom, "\otimes" description]  &
      \K^{n+1}_G(\Eub G, \mathcal{X}) \rar &\Z,
    \end{tikzcd}
  \end{equation*}
  that is, for \(\theta \in \K_{-n}(\sHigComRed(\mathcal{X}) \rtimes_\mu G)\) and \(x \in \K_{n+1}^G(\Eub G, \mathcal{X})\), we have 
  \[
    \langle \partial_{\mathcal{X}} x, \theta \rangle = (-1)^n \langle x, \delta_{\mathcal{X}} \theta \rangle.
  \]
  
  Moreover, if \(G\) admits a \(G\)-finite \(\Eub G\) and a \(\gamma\)-element, then \(\delta_{\mathcal{X}}\) and \(\partial_{\mathcal{X}}\) are split-surjective and split-injective, respectively.
  \label{thm:gamma_element_rich_pairing}
\end{thm}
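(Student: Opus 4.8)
The plan is to deduce the sign formula from the compatibility of the slant product with the connecting homomorphisms of the pair long exact sequences, and to read off the final split-exactness statement directly from the earlier propositions. First I would specialize the second compatibility diagram of \Cref{thm:slantproductpropertiessummarized}(iv) — the equivariant $\sigma$-version of \Cref{lem:structuregroupslantcompatiblewithpairLESboundaryI} — to the case where the source pair is $(\{*\},\emptyset)$ with the trivial group action, the target pair is $(\Eub G,\cX)$ with its $G$-action, and the degrees are $p=n+1$, $q=n$. Using $\{*\}\times\Eub G=\Eub G$ and $\{*\}\times\cX=\cX$ one has $X\times B\cup A\times Y=\cX$ and $A\times Y=\emptyset$, so the excision isomorphism is the identity and the diagram collapses to a square with corners $\Strg_{n+1}^G(\Eub G,\cX)\otimes\K_{-n}(\sHigComRed\cX\rtimes_\mu G)$ (top-left), $\Strg_{n+1}^G(\Eub G,\cX)\otimes\K_{-n-1}(\sHigComRed(\Eub G,\cX)\rtimes_\mu G)$ (top-right), $\Strg_n^G(\cX)\otimes\K_{-n}(\sHigComRed\cX\rtimes_\mu G)$ (bottom-left) and $\K_0(\Roe\{*\})=\Z$ (bottom-right); its top map is $\id\otimes\partial$, its left map is $\partial\otimes\id$, and its right and bottom maps are the slant products for a one-point source, i.e.\ exactly the pairings on $(\Eub G,\cX)$ and on $\cX$. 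This square commutes up to the sign $(-1)^{p-1}=(-1)^n$.

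Next I would match the two legs with the two sides of the asserted identity. By \Cref{lem:canonical_relative_iso} the comparison map $\K_{n+1}^G(\Eub G,\cX)\cong\Strg_{n+1}^G(\Eub G,\cX)$ is an isomorphism, and under it the left vertical map $\partial\otimes\id$ becomes the boundary $\partial_{\cX}$ of \Cref{prop:boundary_inj}; hence the down-then-right leg applied to $x\otimes\theta$ computes $\langle\partial_{\cX}x,\theta\rangle$. Dually, by \Cref{lem:canonical_relative_iso_dual} the canonical map $\K_G^{n+1}(\Eub G,\cX)\cong\K_{-n-1}(\sHigComRed(\Eub G,\cX)\rtimes_\mu G)$ is an isomorphism, and by the very definition of $\delta_\cX$ in \labelcref{eq:HigCom_Ktheory_comparison} the class $\delta_{\cX}\theta$ corresponds to $\partial\theta$ under it; so $\id\otimes\partial$ is $\id\otimes\delta_{\cX}$ up to this identification, and the right-then-down leg computes the structure-group slant pairing of $x$ with $\delta_{\cX}\theta$.

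It then remains to identify this last pairing with the standard $\K$-homology/$\K$-cohomology pairing $\langle x,\delta_{\cX}\theta\rangle$. Here I would invoke the equivariant relative $\sigma$-version of \Cref{lem:structuregroupslantcompatiblewithhomologycohomologyslant}: starting from $\Strg_{n+1}^G(\Eub G,\cX)\otimes\K_G^{n+1}(\Eub G,\cX)$ for the one-point source, that lemma equates the composite \enquote{apply $\Strg\to\K^G$, slant into $\K$-homology, then $\Ind$} with the composite \enquote{apply $\K_G^{*}\to\K_{-*}(\sHigComRed(\blank)\rtimes_\mu G)$, then structure-group slant}. Since for a point source $\Ind\colon\K_0^{\{1\}}(\{*\})\to\K_0(\Roe\{*\})=\Z$ is an isomorphism, the former composite is exactly the standard pairing $\langle x,\delta_{\cX}\theta\rangle$, while the latter is the right-then-down leg above. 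Combining this identification with the two boundary identifications and the sign $(-1)^n$ of the square yields $\langle\partial_{\cX}x,\theta\rangle=(-1)^n\langle x,\delta_{\cX}\theta\rangle$.

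Finally, the \enquote{moreover} clause is immediate: under the hypotheses that $G$ has a $G$-finite $\Eub G$ and a $\gamma$-element, \Cref{prop:dual_boundary_surj} gives that $\delta_{\cX}$ is split-surjective and \Cref{prop:boundary_inj} that $\partial_{\cX}$ is split-injective. The main obstacle I anticipate is purely the sign- and identification-bookkeeping: verifying that the isomorphisms of \Cref{lem:canonical_relative_iso,lem:canonical_relative_iso_dual} intertwine the two connecting maps with $\partial_{\cX}$ and $\delta_{\cX}$ without introducing further signs, so that the single sign $(-1)^{p-1}$ coming from the compatibility diagram is the only one that survives in the final formula.
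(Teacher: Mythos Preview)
Your proposal is correct and follows essentially the same approach as the paper's own proof: both specialize the two relevant diagrams of \Cref{thm:slantproductpropertiessummarized}---the second diagram of part (iv) for the upper square with sign $(-1)^{p-1}=(-1)^n$, and the first diagram of part (ii) for the lower identification via \Cref{lem:canonical_relative_iso,lem:canonical_relative_iso_dual}---to the pair $(\{*\},\emptyset)\times(\Eub G,\cX)$, and then cite \Cref{prop:dual_boundary_surj,prop:boundary_inj} for the final clause. Your write-up is simply more explicit about the bookkeeping (the collapse of the excision map, the role of $\Ind$ at a point) than the paper's terse version.
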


\begin{proof}
From the $\sigma$-version of two diagrams in \Cref{thm:slantproductpropertiessummarized} with \((\Eub G,\cX)\) in place of \((Y,B)\) and \((\{*\},\emptyset)\) in place of \((X,A)\) we obtain a diagram
  \begin{equation*}
    \begin{tikzcd}
      \Strg_n^G(\mathcal{X}) \arrow[r, phantom, "\otimes" description]  &
      \K_{-n}(\sHigComRed(\mathcal{X}) \rtimes_\mu G) \dar["\partial"] \rar 
      & \Z \arrow[d, equal] \\
       \Strg_{n+1}^G(\Eub G, \mathcal{X}) \dar["\cong"]\uar["\partial"]\arrow[r, phantom, "\otimes" description]  &
      \K_{-n-1}(\sHigComRed(\Eub G,\mathcal{X})\rtimes_\mu G) \rar &\Z \arrow[d, equal] \\
       \K_{n+1}^G(\Eub G, \mathcal{X}) \arrow[r, phantom, "\otimes" description]  &
      \K^{n+1}_G(\Eub G, \mathcal{X})\uar["\cong"] \rar &\Z
    \end{tikzcd}
  \end{equation*}
whose upper part commutes up to a sign $(-1)^n$ and whose lower part commutes.

The second claim was already part of \Cref{prop:dual_boundary_surj} \Cref{prop:boundary_inj}. 
\end{proof}

\section{An application to positive scalar curvature metrics}
\label{sec:PSC_application}
In this final section, we discuss how our pairings can be applied to secondary invariants associated to  positive scalar curvature metrics.

To this end, let \(X\) be a space and consider Stolz' positive scalar curvature bordism sequence
\[
  \dotsc \to \StolzRel_{n+1}(X) \xrightarrow{\partial} \StolzPos_n(X) \to \SpinBordism_n(X) \to \StolzRel_{n}(X) \to \dots.
\]
Here, briefly, \(\StolzRel_{n+1}(X)\) consists of suitable bordism classes of compact spin \((n+1)\)-manifolds \(W\) with maps \(W \to X\) together with a psc metric on \(\partial W\).
The group \(\StolzPos_n(X)\) are psc bordisms classes of closed spin \(n\)-manifolds endowed with a psc metric.
Moreover, \(\SpinBordism_n(X)\) denotes the usual spin bordism group.
The maps are all given by the canonical forgetful maps.
For more details, we refer to~\cite{Stolz98Concordance}, see also~\cite{PiazzaSchick:StolzPSC,XieYuPscLocAlg}.
In work of \textcite{PiazzaSchick:StolzPSC}, \textcite{XieYuPscLocAlg} as well as the second author~\cite{zeidler_secondary,}, it was established that for a discrete group \(G\) there exists a map of the Stolz sequence to the Higson--Roe sequence yielding the following commutative diagram
\begin{equation}\label{mapping_psc_to_HR}
  \begin{tikzcd}
   \StolzRel_{n+1}(\Bfree G) \rar["\partial"] \dar["\alpha"] & \StolzPos_n(\Bfree G) \rar \dar["\rho"] & \SpinBordism_n(\Bfree G) \dar \rar["\alpha"] & \StolzRel_{n}(\Bfree G) \dar \\
   \KO_{n+1}(\Roe[G](\Efree G)) \rar["\partial_{\mathrm{HR}}"] & \StrgReal_{n}^G(\Efree G) \rar & \KO_{n}(\Bfree G) \rar & \KO_{n}(\Roe[G](\Efree G)),
  \end{tikzcd}
\end{equation}
where the second line is the real K-theory version of the Higson--Roe sequence.
Our constructions of pairings and slant products introduced in the previous sections apply without essential changes to the setting of real K-theory as all we have done is to apply the existence of various long exact sequences (in particular, we have not essentially relied on \(2\)-periodicity in the complex case or used Künneth theorems which are more subtle in the real setting).

We will now focus on the higher rho-invariant \(\rho \colon \StolzPos_n(\Bfree G) \to \StrgReal^G_{n}(\Efree G)\) which is a bordism invariant of psc metrics.
There has been considerable interest in proving detection results and extract numerical invariants out of psc metrics via the higher rho invariant, see \textcite{piazza2025mappinganalyticsurgeryhomology} as well as \textcite{WZY:rho_numbers}.
Combining the rho-invariant with (the real version of) our pairing
\[
  \langle\blank,\blank\rangle\colon \StrgReal^G_n(\Efree G) \otimes \KO_{-n}(\sHigComRed (\Efree G)\rtimes_\mu G)\to \KO_{0}(\Roe\{*\}) = \Z,
\]
means that the group \(\KO_{-n}(\sHigComRed (\Efree G)\rtimes_\mu G)\) can be viewed as an abstract repository of numerical rho-invariants in this spirit.
That is, we have a homomorphism
\begin{align}
  \KO_{-n}(\sHigComRed(\Efree G) \rtimes_\mu G) &\to \Hom(\StolzPos_n(\Bfree G), \Z) \label{eq:psc_pairing} \\
  \theta &\mapsto \left( [M,g] \mapsto \langle\rho([M,g]), \theta\rangle \right).\nonumber
\end{align}

In the rest of this section, we discuss that a rephrasing of the detection results of \cite{XYZ} imply that \labelcref{eq:psc_pairing} is quite rich provided that we are in the realm of our \cref{thm:gamma_element_rich_pairing}.
Note that there is a conceptual mismatch between the two rows in \labelcref{mapping_psc_to_HR} because the lower row is \(8\)-periodic in \(n\), whereas the upper row is not (for instance, it does not exist in negative dimensions).
To formulate the result lightly it is thus convenient to artificially enforce Bott periodicity on the upper row.
To this end, let \(\BottMfd\) be a Bott manifold, that is, an \(8\)-dimensional simply-connected spin manifold with \(\hat{\mathrm{A}}(\BottMfd) = 1\).
For any of the constituents of Stolz' sequence \(\mathcal{S} \in \{\StolzRel, \StolzPos, \SpinBordism\}\) it makes sense to define
\[
  \mathcal{S}_{n}(X)[\BottMfd^{-1}] \coloneqq \varinjlim \left( \mathcal{S}_{n}(X) \xrightarrow{\times \BottMfd} \mathcal{S}_{n+8}(X) \xrightarrow{\times \BottMfd} \dotsc \right)
\]
turning the upper sequence \(8\)-periodic. 
The maps from \labelcref{mapping_psc_to_HR} then extend to the Bott-stabilized versions of Stolz' sequence by Bott-periodicity in real K-theory.

\begin{prop} \label{prop:enough_psc_metrics}
  The image of the map \(\partial_{\Efree G} \colon \KO_{n+1}^G(\Eub G, \Efree G) \to \StrgReal^G_n(\Efree G)\) is rationally contained in the image of the stable \(\rho\)-invariant of positive scalar curvature metrics \(
    \rho \colon \StolzPos_n(\Bfree G)[\BottMfd^{-1}] \to \StrgReal^G_{n}(\Efree G).
  \)
\end{prop}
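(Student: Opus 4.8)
The plan is to split the argument into a purely homological reduction, which identifies \(\mathrm{image}(\partial_{\Efree G})\) with a subgroup controlled by the Baum--Connes assembly map, followed by a geometric realization step that is the actual content of the detection results of \cite{XYZ}. Write \(\iota\colon \Efree G\to\Eub G\) for the canonical map; everything takes place in real \(\K\)-theory.

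First I would determine \(\mathrm{image}(\partial_{\Efree G})\). Since, by \cref{lem:canonical_relative_iso}, the map \(\partial_{\Efree G}\) is the pair boundary map \(\StrgReal^G_{n+1}(\Eub G,\Efree G)\to\StrgReal^G_n(\Efree G)\) precomposed with an isomorphism, exactness of the pair sequence yields
\[\mathrm{image}(\partial_{\Efree G})=\ker\bigl(\iota_\ast\colon\StrgReal^G_n(\Efree G)\to\StrgReal^G_n(\Eub G)\bigr).\]
I would then push such a class \(s\) into \(\K\)-homology along the Higson--Roe sequence. Using the standard rational computation of equivariant \(\K\)-homology (the delocalized Chern character), \(\iota\) induces the inclusion of the principal, i.e.\ identity-conjugacy-class, summand, so that \(\iota_\ast\colon\KO_n(\Bfree G)\otimes\Q\to\KO^G_n(\Eub G)\otimes\Q\) is injective; hence the \(\K\)-homology image of \(s\) vanishes rationally and \(s\) is, rationally, of the form \(\partial_{\mathrm{HR}}(y)\) for some \(y\in\KO_{n+1}(\Roe[G](\Efree G))\). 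Finally, naturality of the Higson--Roe sequence under \(\iota\), together with the coarse invariance \(\KO_{n+1}(\Roe[G](\Efree G))\cong\KO_{n+1}(\Roe[G](\Eub G))\), turns the defining condition \(\iota_\ast(s)=0\) into the statement that the image of \(y\) lies in the image of the assembly map \(\Ind\colon\KO^G_{n+1}(\Eub G)\to\KO_{n+1}(\Roe[G](\Eub G))\).

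Second I would realize these classes by positive scalar curvature. The commutative ladder \eqref{mapping_psc_to_HR} gives \(\partial_{\mathrm{HR}}\circ\alpha=\rho\circ\partial\), with \(\partial\) the boundary map of Stolz' sequence, so \(\mathrm{image}(\rho)\) contains \(\partial_{\mathrm{HR}}\) of the image of the relative higher index \(\alpha\colon\StolzRel_{n+1}(\Bfree G)\to\KO_{n+1}(\Roe[G](\Efree G))\). It therefore suffices to know that, rationally, the classes \(y\) produced above are hit by \(\alpha\) modulo \(\ker(\partial_{\mathrm{HR}})\); equivalently, that the relative higher index captures the full assembly image of \(\Eub G\)—including its delocalized part coming from finite subgroups—up to the part already visible on \(\Efree G\). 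This is precisely where the detection results of \cite{XYZ} enter, in combination with the classical rational surjectivity of the spin orientation \(\SpinBordism_\ast(\Bfree G)\to\KO_\ast(\Bfree G)\). Passing to the Bott-stabilized groups \(\StolzPos_n(\Bfree G)[\BottMfd^{-1}]\) is what makes the latter an \(8\)-periodic statement matching real \(\K\)-theory, so that it can be applied in every degree \(n\).

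The hard part is the realization step of the second paragraph: converting an abstract assembled index class, in particular its delocalized contributions, into an honest family of positive scalar curvature metrics whose higher \(\rho\)-invariants reproduce the prescribed structure classes. The homological reduction of the first paragraph is elementary diagram chasing; all the geometric substance—and the reason for inverting the Bott manifold—sits in the detection results of \cite{XYZ}, which I would invoke essentially as a black box after matching up notation.
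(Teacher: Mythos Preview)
Your proposal is correct and follows essentially the same route as the paper. The only cosmetic difference is in the organization of the diagram chase: where you use rational injectivity of \(\iota_*\colon\KO_n(\Bfree G)\to\KO^G_n(\Eub G)\) (Chern character) to place \(s\) in \(\im(\partial_{\mathrm{HR}})\) and then constrain the lift \(y\) by \(\iota_*(s)=0\), the paper instead proves the integral identity \(\im(\partial_{\Efree G}\circ\pi)=\im(\partial_{\mathrm{HR}}\circ\tilde\mu)\) in one step as an intersection of two kernels and then invokes the equivalent rational surjectivity of \(\pi\colon\KO^G_{n+1}(\Eub G)\to\KO^G_{n+1}(\Eub G,\Efree G)\); both then cite \cite{XYZ} for \(\im(\alpha\otimes\C)\supseteq\im(\tilde\mu\otimes\C)\) together with the delocalized APS index theorem \(\rho\circ\partial=\partial_{\mathrm{HR}}\circ\alpha\).
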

\begin{proof}
  Consider the canonical map \(\pi \colon  \KO_{n+1}^G(\Eub G) \to \KO_{n+1}^{G}(\Eub G, \Efree G)\) as well as the assembly map associated to \(\Eub G\) in the form \(\tilde{\mu} \colon  \KO_{n+1}^G(\Eub G) \to \KO_{n+1}(\Roe[G](\Eub G)) \xleftarrow{\cong} \KO_{n+1}(\Roe[G](\Efree G))\), where we implicitly apply the fact that \(\Efree G\) and \(\Eub G\) are equivalent on the level of equivariant Roe algebras.

  Then a diagram chase using exactness shows that
  \begin{align}
    \im(\partial_{\Efree G} \circ \pi) &= \ker\left(\StrgReal_n^G(\Efree G) \to \StrgReal^G_n(\Eub G)\right) \cap \ker\left(\StrgReal_n^G(\Efree G) \to \KO_n^G(\Efree G)\right) \nonumber
    \\&= \im(\partial_{\mathrm{HR}} \circ \tilde{\mu}),
    \label{eq:higsonRoe_boundary_same_image}
  \end{align}
  where \(\partial_{\mathrm{HR}} \colon \KO_{n+1}(\Roe[G](\Efree G)) \to \StrgReal^G_n(\Efree G)\) is the boundary map in the Higson--Roe sequence associated to the space \(\Efree G\).
  
  It is a consequence of \cite[Theorem~3.8 and Proposition~2.6]{XYZ} that
  \begin{equation}
    \im((\StolzRel_{n+1}(\Bfree G)[\BottMfd^{-1}] \xrightarrow{\alpha} \KO_{n+1}(\Roe[G](\Efree G))) \otimes \C) \supseteq \im(\tilde{\mu} \otimes \C). \label{eq:StolzRel_assembly_inclusion}
  \end{equation}
  
  In light of the delocalized APS index theorem (see \cite{XieYuPscLocAlg,PiazzaSchick:StolzPSC,zeidler_secondary}), the following diagram is commutative
  \[
    \begin{tikzcd}
      \StolzRel_{n+1}(\Bfree G)[\BottMfd^{-1}] \ar[r, "\alpha"] \ar[d, "\partial"] & \KO_{n+1}(\Roe[G](\Efree G)) \ar[d, "\partial_{\mathrm{HR}}"]\\
      \StolzPos_{n}(\Bfree G)[\BottMfd^{-1}] \ar[r, "\rho"] & \StrgReal_n^G(\Efree G).
    \end{tikzcd}
  \]
  In sum, we obtain
  \[
    \im(\rho \otimes \C) \supseteq \im((\partial_{\mathrm{HR}} \circ \alpha) \otimes \C) \underset{\labelcref{eq:StolzRel_assembly_inclusion}}{\supseteq} \im((\partial_{\mathrm{HR}} \circ \tilde{\mu}) \otimes \C)  \underset{\labelcref{eq:higsonRoe_boundary_same_image}}{=} \im((\partial_{\Efree G} \circ \pi) \otimes \C)
  \]
  Since the map \(\pi \colon  \KO_{n+1}^G(\Eub G) \to \KO_{n+1}^{G}(\Eub G, \Efree G)\) is rationally surjective (e.g.\ a consequence of the equivariant Chern character \cite{Lueck:Chern,Baum-Connes:Chern}), this implies the desired claim \(\im(\rho \otimes \C) \supseteq \im(\partial_{\Efree G}\otimes \C)\).
\end{proof}

Combining (a real version) of \cref{thm:gamma_element_rich_pairing} with \cref{prop:enough_psc_metrics} immediately yields the following statement which is dual to the \enquote{Novikov rho invariant} of \cite[Section~7]{WZY:rho_numbers} and \cite[Chapter~15]{piazza2025mappinganalyticsurgeryhomology}.

\begin{cor}
  Suppose that \(G\) admits a \(G\)-finite \(\Eub G\) and a \(\gamma\)-element. Then there is a composite map 
  \begin{align*}
   \mathcal{R} \colon \KO^{n+1}_G(\Eub G, \Efree G) \otimes \C &\dashrightarrow \KO_{-n}(\sHigComRed(\Efree G) \rtimes_\mu G) \otimes \C 
   \\&\to \Hom(\StolzPos_n(\Bfree G)[\BottMfd^{-1}], \C)\,, 
  \end{align*}
  where the second map is given by \labelcref{eq:psc_pairing} and the first map depends on the \(\gamma\)-element, such that the following holds:
  For any \(x \in \KO_{n+1}^G(\Eub G, \Efree G) \otimes \C\), there exists \(z \in \StolzPos_n(\Bfree G)[\BottMfd^{-1}] \otimes \C\) such that for all \(\vartheta \in \KO^{n+1}_{G}(\Eub G, \Efree G)\otimes \C\) we have
  \[
    \mathcal{R}_\vartheta(z) = \langle x, \vartheta \rangle.
  \]
\end{cor}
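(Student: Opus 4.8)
The plan is to build $\mathcal{R}$ out of the two split maps furnished by the $\gamma$-element hypothesis and then deduce the pairing identity directly from the adjunction of \cref{thm:gamma_element_rich_pairing}. First I would apply \cref{prop:dual_boundary_surj}, in its real $\KO$-theoretic form, to $\cX = \Efree G$, which is $\sigma$-$G$-compact for countable $G$: this shows that the dual boundary map $\delta_{\Efree G}\colon \KO_{-n}(\sHigComRed(\Efree G)\rtimes_\mu G)\to\KO^{n+1}_G(\Eub G,\Efree G)$ is split-surjective, so after tensoring with $\C$ I may fix a splitting $\sigma$ with $\delta_{\Efree G}\circ\sigma=\id$. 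This $\sigma$ is the dashed first arrow of $\mathcal{R}$, and its dependence on the $\gamma$-element is precisely the choice of splitting. Composing $\sigma$ with the Bott-stabilized and rationalized version of the psc-pairing map \labelcref{eq:psc_pairing} defines $\mathcal{R}$, so that by construction $\mathcal{R}_\vartheta(z)=\langle\rho(z),\sigma(\vartheta)\rangle$ for all $\vartheta\in\KO^{n+1}_G(\Eub G,\Efree G)\otimes\C$ and $z\in\StolzPos_n(\Bfree G)[\BottMfd^{-1}]\otimes\C$.

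For the realization statement, I would fix $x\in\KO_{n+1}^G(\Eub G,\Efree G)\otimes\C$ and invoke \cref{prop:enough_psc_metrics} to see that $\partial_{\Efree G}(x)$ lies in the rational image of the stable $\rho$-invariant. Hence I can choose $z\in\StolzPos_n(\Bfree G)[\BottMfd^{-1}]\otimes\C$ with $\rho(z)=(-1)^n\,\partial_{\Efree G}(x)$; working rationally, the sign factor is harmless. Then for every $\vartheta$ the computation
\[
\mathcal{R}_\vartheta(z)=\langle\rho(z),\sigma(\vartheta)\rangle=(-1)^n\langle\partial_{\Efree G}(x),\sigma(\vartheta)\rangle=(-1)^n(-1)^n\langle x,\vartheta\rangle=\langle x,\vartheta\rangle
\]
concludes the argument, where the middle equality is the adjunction $\langle\partial_{\Efree G}x,\theta\rangle=(-1)^n\langle x,\delta_{\Efree G}\theta\rangle$ of \cref{thm:gamma_element_rich_pairing} (real version) applied to $\theta=\sigma(\vartheta)$, together with $\delta_{\Efree G}(\sigma(\vartheta))=\vartheta$.

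The substantive content is entirely imported: \cref{thm:gamma_element_rich_pairing} supplies the adjunction between $\partial_\cX$ and $\delta_\cX$, \cref{prop:enough_psc_metrics} supplies the geometric realization of boundary classes by positive scalar curvature metrics, and the $\gamma$-element hypothesis is exactly what makes $\delta_{\Efree G}$ split-surjective so that $\sigma$ exists. The one point requiring care, rather than a genuine obstacle, is that \cref{thm:gamma_element_rich_pairing} and \cref{prop:dual_boundary_surj} must be invoked in their real $\KO$-theoretic, Bott-stabilized forms; this is legitimate by the earlier remark that the slant-product constructions pass to real K-theory without essential change, and I would also record explicitly that $\Efree G$ qualifies as a $\sigma$-$G$-compact $\sigma$-$G$-metric space so that the cited results apply verbatim. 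Beyond this bookkeeping and the sign chase, the proof is a one-line diagram chase wrapping the two quoted results around the pairing adjunction.
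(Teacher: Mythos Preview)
Your proposal is correct and follows essentially the same route as the paper: fix a splitting of $\delta_{\Efree G}$ (granted by \cref{prop:dual_boundary_surj} via \cref{thm:gamma_element_rich_pairing}), use \cref{prop:enough_psc_metrics} to realize $\partial_{\Efree G}(x)$ as a $\rho$-class, and then apply the adjunction identity. In fact you are slightly more careful than the paper's own proof, which silently drops the $(-1)^n$ sign from \cref{thm:gamma_element_rich_pairing}; your device of choosing $z$ with $\rho(z)=(-1)^n\partial_{\Efree G}(x)$ makes the sign bookkeeping explicit and correct.
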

\begin{proof}
  Let \(s \colon \KO^{n+1}_G(\Eub G, \Efree G) \otimes \C \dashrightarrow \KO_{-n}(\sHigComRed(\Efree G) \rtimes_\mu G) \otimes \C\) be a split of \(\delta_{\Efree G}\) according to \cref{thm:gamma_element_rich_pairing}.
  Now for any \(x \in \KO_{n+1}^G(\Eub G, \Efree G)\), use \cref{prop:enough_psc_metrics} to find \(z \in \StolzPos_n(\Bfree G)[\BottMfd^{-1}] \otimes \C\) with \(\rho(z) = \partial_{\Efree G}x\).
  Then 
    \[
    \mathcal{R}_\vartheta(z) = \langle \rho(z), s(\vartheta)\rangle = \langle \partial_{\Efree G}x, s(\vartheta)\rangle = \langle x, \delta_{\Efree G} s(\vartheta)\rangle = \langle x, \vartheta\rangle. \qedhere 
  \]
\end{proof}

\printbibliography[heading=bibintoc]
\end{document}